\DeclareFontFamily{U}{bskma}{\skewchar\font130 }
\DeclareFontShape{U}{bskma}{m}{n}{<->bskma10}{}
\DeclareSymbolFont{bskadd} {U} {bskma}{m}{n}
\DeclareMathSymbol{\boxright} {\mathbin}{bskadd} {"A0}
\numberwithin{equation}{section}
\newcommandx{\unsure}[2][1=]{\todo[linecolor=red,backgroundcolor=red!25,bordercolor=red,#1]{#2}}
\newcommandx{\change}[2][1=]{\todo[linecolor=blue,backgroundcolor=blue!25,bordercolor=blue,#1]{#2}}
\newcommandx{\info}[2][1=]{\todo[linecolor=cyan,backgroundcolor=cyan!25,bordercolor=cyan,#1]{#2}}
\newcommandx{\improvement}[2][1=]{\todo[caption={Short note},linecolor=violet,backgroundcolor=violet!25,bordercolor=violet,size=\tiny,#1]{#2}}
\newcommandx{\thiswillnotshow}[2][1=]{\todo[disable,#1]{#2}}
\theoremstyle{definition}
\newtheorem{defi}{Definition}[section]
\newtheorem{rem}[defi]{Remark}
\newtheorem{ex}[defi]{Example}
\newtheorem{nota}[defi]{Notation}
\theoremstyle{plain}
\newtheorem{lemma}[defi]{Lemma}
\newtheorem{theo}[defi]{Theorem}
\crefname{theo}{Theorem}{Theorems}
\newtheorem{prop}[defi]{Proposition}
\newtheorem*{theo*}{Theorem}
\newcommand{\id}{\operatorname{id}}
\newcommand{\A}{\mathcal{A}}
\newcommand{\E}{\mathcal{E}}
\newcommand{\NC}{\operatorname{NC}}
\newcommand{\NCirr}{\mathrm{NC}^{\scriptscriptstyle\mathrm{irr}}}
\newcommand{\uno}{{\bf{1}}}
  \renewcommand{\contentsname}%
    {Table of Contents}%
\newenvironment{arb}{\begin{tikzpicture}[baseline,scale=0.5,level distance=7mm,level 1/.style={sibling distance=10mm},level 2/.style={sibling distance=5mm},level 3/.style={sibling distance=3mm},grow=down, font=\scriptsize]
\tikzstyle{ve}=[draw,circle,inner sep=1pt,fill] 
\tikzstyle{vv}=[draw,circle,inner sep=1pt] 
\tikzstyle{vee}=[minimum size=0pt ,inner sep=0pt]}{\end{tikzpicture}}
\newenvironment{arbb}{\begin{tikzpicture}[baseline,scale=0.5,level distance=7mm,level 1/.style={sibling distance=25mm},level 2/.style={sibling distance=5mm},level 3/.style={sibling distance=3mm},grow=down, font=\scriptsize]
\tikzstyle{ve}=[draw,circle,inner sep=1pt,fill] 
\tikzstyle{vv}=[draw,circle,inner sep=1pt] 
\tikzstyle{vee}=[minimum size=0pt ,inner sep=0pt]}{\end{tikzpicture}}
\definecolor{red}{rgb}{1.,0.,0.}
\definecolor{green}{rgb}{0.,1.,0.}
\definecolor{blue}{rgb}{0.,0.,1.}
\definecolor{orange}{rgb}{1.,0.8431372549019608,0.}
\title[Conditionally monotone cumulants via shuffle algebra]{Conditionally monotone cumulants\\ via shuffle algebra}
\author[Adrián Celestino]{Adrián Celestino${}^{\dagger}$}
\address[${}^{\dagger}$]{Institut f\"ur Diskrete Mathematik, Technische Universit\"at Graz, Steyrergasse 30, 8010 Graz, Austria.}
\email{celestino@math.tugraz.at}
\urladdr{https://sites.google.com/view/adriancelestino/}
\author[Kurusch Ebrahimi-Fard]{Kurusch Ebrahimi-Fard${}^{\diamond}$}
\address[${}^{\diamond}$]{Department of Mathematical Sciences, Norwegian University of Science and Technology (NTNU), NO-7491 Trondheim, Norway. Centre for Advanced Study (CAS),  Drammensveien 78, 0271 Oslo, Norway.}
\email{kurusch.ebrahimi-fard@ntnu.no}
\urladdr{https://folk.ntnu.no/kurusche/}
\subjclass[2020]{05E99, 16T30, 17A30, 46L53}
\keywords{cumulants, conditionally free cumulants, conditionally monotone cumulants, additive convolution, non-crossing partitions, Hopf algebras, shuffle algebras, pre-Lie Magnus expansion}
\date{\today}
\begin{document}

\begin{abstract}
In this work we study conditional monotone cumulants and additive convolution in the shuffle-algebraic approach to non-commutative probability. We describe c-monotone cumulants as an infinitesimal character and identify the c-monotone additive convolution as an associative operation in the set of pairs of characters in the dual of a double tensor Hopf algebra. In this algebraic framework, we understand previous results on c-monotone cumulants and prove a combinatorial formula that relates c-free and c-monotone cumulants. We also identify the notion of $t$-Boolean cumulants in the shuffle-algebraic approach and introduce the corresponding notion of $t$-monotone cumulants as a particular case of c-monotone cumulants.
\end{abstract}

\maketitle

\section{Introduction}
\label{sec:intro}



Non-commutative probability extends classical probability theory to situations where random variables do not necessarily commute, offering a framework for comprehending and studying non-commutative distributions. In classical probability, the \emph{independence} of events is a fundamental concept. Conversely, in non-commutative probability, as demonstrated by Muraki in his work \cite{Mur}, a specific set of natural axioms gives rise to five notions of independence: tensor, free, Boolean, monotone, and anti-monotone independence. Each of these notions offers a rich framework for establishing a theory of non-commutative probability. Among the various concepts of independence, \textit{free independence}, first introduced by Voiculescu in the 1980s, has proven to be particularly fruitful. It has forged connections with diverse branches of mathematics, including operator algebras, random matrices, non-commutative stochastic processes, combinatorics, representation theory, and quantum information theory.
\par For each natural notion of independence, a corresponding type of \emph{cumulants} can be defined, serving as a crucial tool in the combinatorial and computational understanding of non-commutative probability. Additionally, extensions of non-commutative probability, motivated by both combinatorial and analytic considerations, have been introduced. Notably, M.~Bo\.zejko and R.~Speicher in \cite{BS91, BLS96} introduced the concept of \emph{conditionally free independence} as a framework that jointly encompasses Boolean and free cumulants.
\par In the work \cite{Franz06}, U.~Franz demonstrated that conditionally free independence also encompasses monotone independence. However, the associativity of the monotone product, as well as the notion of monotone cumulants, cannot be deduced from the conditionally free product and cumulants. To address this shortcoming, T.~Hasebe introduced the concept of \textit{conditionally monotone product} in his work \cite{Has}. This innovation allowed him to define corresponding concepts of \textit{conditionally monotone independence} and \emph{conditionally monotone cumulants}. Hasebe established that these notions comprehensively include the Boolean and monotone independence and cumulants, akin to the conditionally free case. Furthermore, it was demonstrated in \cite{Has} that conditionally monotone convolution incorporates \emph{orthogonal convolution}, introduced by R.~Lenczewski in \cite{Len}.
\par On the contrary, the concept of \emph{Hopf algebra} (refer to \cite{CP21}) proves instrumental in the combinatorial exploration of non-commutative probability. Notably, in a recent series of papers, the second author, in collaboration with F.~Patras (\cite{EFP1,EFP2,EFP3,EFP4}), has advanced a Hopf-algebraic approach to free, Boolean, and monotone cumulants. This framework has proven valuable for a comprehensive study of the relationships between moments and cumulants, as well as the additive convolution product in non-commutative probability.
\par Ebrahimi-Fard and Patras examine a specific Hopf algebra on the double tensor algebra $H:=T(T_+(\A))$ associated with a non-commutative probability space $(\A,\varphi)$, along with a character $\Phi$ on $H$ extending the linear functional $\varphi: \A \to \mathbb{K}$. A crucial observation is that the coproduct on $H$ can be split into a sum of two coproducts, allowing the corresponding non-associative products, denoted $\prec$ and $\succ$, in the dual space $H^*:=\operatorname{Lin}(H,\mathbb{C})$, to define the structure of a non-commutative \textit{shuffle algebra} (\cref{prop:shufflealg}). With the convolution product on $H^*$, the shuffle algebra structure enables the definition of three distinct exponential-type maps and their corresponding logarithms. The latter mappings determine infinitesimal characters $\kappa$, $\beta$, and $\rho$, precisely associated with the families of free, Boolean, and monotone cumulants, respectively (\Cref{thm:mainEFP,thm:linkNCP}). Thus, in this context, the moments-cumulant formulas can be understood in a group-theoretic manner, specifically through the connection between the Lie algebra of infinitesimal characters and the group of characters on $H$. 
\par The combinatorial relations between different cumulants given in \cite{AHLV,CEFPP} can be understood in the shuffle picture through the \textit{shuffle adjoint action}, $\theta_\Psi(\alpha) = \Psi^{*-1}\succ\alpha \prec \Psi$, defined for any infinitesimal character $\alpha$ and any character $\Psi$. More precisely, if $\kappa$ and $\beta$ are the linear forms associated with the free and Boolean cumulants of $\Phi$, then one can deduce from the moment-cumulant relations the identities 
\[
	\beta = \theta_\Phi(\kappa) = \Phi^{*-1}\succ \kappa \prec \Phi
	\quad\text{and}\quad 
	\kappa = \theta_{\Phi^{*-1} }(\beta) = \Phi \succ\beta\prec \Phi^{*-1}.
\]
The combinatorial relations between free and Boolean cumulants in terms of irreducible non-crossing partitions in \cite{AHLV} can be obtained from the above equations by evaluating them on a word $w=a_1\cdots a_n\in \A^{\otimes n}$. Thus, the transition between free and Boolean cumulants can be understood as a linear transformation in terms of a particular action of the group of characters on the Lie algebra of infinitesimal characters.
\par In their work \cite{EFP4}, the authors demonstrated that conditionally free cumulants can be naturally understood in terms of the shuffle adjoint action. To elaborate, considering a conditionally non-commutative probability space $(\A,\varphi,\psi)$ and the associated Hopf algebra $H$ with characters $\Phi$ and $\Psi$ extending the linear functionals $\varphi$ and $\psi, $ respectively, the linear form $\mathsf{K}= \theta_{\Psi^{*-1}}(\beta)$ is linked to the conditionally free cumulants of $(\A,\varphi,\psi)$. Here, $\beta$ is the linear form associated with the Boolean cumulants of $\varphi$. Conditionally free additive convolution is defined in the shuffle picture by transporting the sum of the corresponding conditionally free cumulants in terms of shuffle adjoint action.


\subsection*{Contributions of the paper} 

Motivated by the insights from \cite{EFP4}, we present a novel perspective on conditionally monotone cumulants and conditionally monotone additive convolution, using the shuffle algebra approach to non-commutative probability. More precisely, in \Cref{def:cmoninfchar,prop:46P} we consider the conditionally monotone cumulants for $(\A,\varphi,\psi)$ as infinitesimal character $\mathsf{P}$ on $H$ defined through an extension of the inverse of the pre-Lie Magnus expansion, $\beta = W(\rho)$. This extension encapsulates the relationship between Boolean and monotone cumulants in the shuffle picture.

Building upon the strategy of substituting Boolean cumulants of $\Psi$ with those of $\Phi$ in the definition of monotone convolution, we introduce, in \cref{def:cmonconvC}, an operation on the set of character pairs on $H$. Subsequently, in \cref{prop:cmonconvC}, we establish that this operation precisely characterizes conditionally monotone additive convolution. Exploiting the algebraic relations inherent in the shuffle algebra structure, we not only provide an algebraic viewpoint on several known results from \cite{Has} but also give a new formula in \cref{cor:relation} that establishes a connection between conditionally free and monotone cumulants. 

An additional contribution of this manuscript is the elucidation of the notion of \textit{$t$-Boolean cumulants}, introduced by Bo\.zejko and Wysoczanski in \cite{BW01}, as a 1-parameter interpolation between free and Boolean cumulants. We show that this is a specific case of conditionally free cumulants in the shuffle picture. This insight motivates the introduction of the corresponding notion of \textit{$t$-monotone cumulants}, culminating in \cref{thm:tmonotone}, which provides analogous combinatorial formulas to the $t$-Boolean case.


\subsection*{Organization of the paper} 

Apart from the current introduction, the paper is organized as follows. In \cref{sec:cmono}, we outline basic definitions in non-commutative probability, encompassing various notions of independence and cumulants. Specifically, we state the moment-cumulant formulas defining free, Boolean, and monotone cumulants, along with conditionally free and monotone cumulants and convolutions.

Moving on to \cref{sec:shuffle}, we delve into the double tensor Hopf algebra and the shuffle algebraic structure of its dual, serving as a framework for cumulants in non-commutative probability. We explain how the perspective from this framework comprehends conditionally free cumulants and additive convolution.

In \cref{sec:cmoninfchar}, we present the primary contribution of this work by demonstrating how to define an infinitesimal character that extends conditionally monotone cumulants within the shuffle algebraic framework. Similarly, in \cref{sec:condmonoshuffle}, we detail how to define conditionally monotone additive convolution within this framework. Additionally, we establish certain properties from Hasebe's work \cite{Has} using purely shuffle-algebraic arguments.

Moving to \cref{sec:relation}, we unveil a combinatorial relation expressing conditionally free cumulants in terms of conditionally monotone cumulants. Finally, in \cref{sec:tboolean}, we demonstrate that $t$-Boolean cumulants represent a specific case of conditionally free cumulants within the shuffle framework. We extend this construction to the conditionally monotone case, introducing the notion of $t$-monotone cumulants and proving their respective moment-cumulant relation.


\subsection*{Acknowledgements}   
Adrián Celestino is supported by the Austrian Science Fund (FWF) grant I 6232-N (WEAVE). Kurusch Ebrahimi-Fard is supported by the Research Council of Norway through project 302831 “Computational Dynamics and Stochastics on Manifolds” (CODYSMA). He would also like to thank the Centre for Advanced Study (CAS) in Oslo for its support.


\section{Non-commutative independences and cumulants}
\label{sec:cmono}

Voiculescu's theory of free probability provides the most prominent example of a non-commuta- tive probability theory \cite{NSp}. Its notion of independence, called freeness, is characterized by the complete absence of algebraic relations among non-commutative random variables. Nevertheless, non-commutative probability theory extends beyond freeness, accommodating additional notions of independence such as Boolean and monotone independence.


\subsection{Preliminaries on combinatorics}
\label{ssec:prelim}

We start by recalling the combinatorial objects required to describe the moment-cumulant relation. Denoting $[n] :=\{1,\ldots,n\}$, a \textit{partition of $[n]$} is a collection of non-empty pairwise disjoint subsets $\pi=\{\pi_1,\ldots,\pi_m\}$ of $[n]$ such that $[n] = \pi_1\cup\cdots\cup \pi_m$. The elements $\pi_1,\ldots,\pi_m$ are called the \textit{blocks of $\pi$}. We say that $\pi=\{\pi_1,\ldots,\pi_m\}$ is a \textit{non-crossing partition of $[n]$} if it is a partition of $[n]$ such that for any $1\leq a<b<c<d\leq n$ and $\pi_i$ and $\pi_j$ blocks of $\pi$ such that $a,c\in \pi_i$ and $b,d\in \pi_j$, then we have that $\pi_i=\pi_j$. The set of non-crossing partitions on $[n]$ is denoted by $\NC(n)$. The number of blocks of a non-crossing partition $\pi = \{\pi_1,\ldots,\pi_m\} \in \mathrm{NC}(n)$ is denoted by $|\pi|=m$. If both $1$ and $n$ are in the same block, then we call a non-crossing partition \textit{irreducible}. The set of irreducible non-crossing partitions on $[n]$ is denoted by $\mathrm{NC}^{\scriptscriptstyle\mathrm{irr}}(n)$. The sets of non-crossing and irreducible non-crossing partitions on $[n]$ with $k$ blocks are denoted $\mathrm{NC}_k(n)$ respectively $\mathrm{NC}_k^{\scriptscriptstyle\mathrm{irr}}(n)$. An \textit{interval partition of $[n]$} is a non-crossing partition $\pi\in \NC(n)$ such that its blocks are intervals. Interval partitions are non-crossing and form the lattice $\mathrm{I}(n)$. The above definitions can be easily extended for the general case when replacing $[n]$ by a totally ordered set $A$. In particular, we denote by $\NC(A)$ the set of non-crossing partitions on $A$.
\par There is a natural partial order  $\leq$ on the set $\mathrm{NC}(n)$ called \textit{reversed refinement order}. For $\pi, \sigma \in \mathrm{NC}(n)$, we write $\pi \leq \sigma$ if every block of $\sigma$ is a union of blocks of $\pi$. The maximal element $1_n \in \mathrm{NC}(n)$ in this order is the partition of $[n]$ with a single block, $1_n:=\{\{1,\ldots,n\}\}$. The minimal element is $0_n:=\{\{1\},\{2\},\ldots,\{n\}\}$. 
\par The blocks of a non-crossing partition $\pi=\{\pi_1,\dots,\pi_k\}$ are naturally ordered, i.e., $\pi_j<\pi_i$ if and only if there exist elements $a,b\in\pi_j$ such that $a<c<b$ for any $c \in \pi_i$. A block $\pi_i$  is \textit{outer} if it is minimal for this order, that is, if there are no elements $a,c$ in another block $\pi_j$ such that $a<c<b$ for any $c\in \pi_i$. A block $\pi_i$ is called \textit{inner} if it is not an outer block.
\par To any irreducible non-crossing partition ${\pi \in \mathrm{NC}^{\scriptscriptstyle\mathrm{irr}}(n)}$ one can naturally associate a rooted tree $t(\pi)$ encoding the hierarchy of the nested blocks of $\pi$, called the \textit{nesting tree of $\pi$}. Notice that the root is associated with the unique outer block containing $1,n \in [n]$. If the non-crossing partition is not irreducible, then one associates the \textit{nesting forest of $\pi$} as the forest of rooted trees defined in the following way: consider the block $\pi_1$ of $\pi$ that contains $1=:i_{1,\min}$ and write $i_{1,\max}$ for its maximal element. The set of blocks $\pi^{(1)}:=\{\pi_i\in\pi|\pi_1 \leq \pi_i\}$ forms an irreducible non-crossing partition of the set $\{i_{1,\min},i_{1,\min}+1,\dots,i_{1,\max}\}$. Inductively, having defined the irreducible non-crossing partitions $\pi^{(1)},\ldots,\pi^{(j-1)}$, we consider the block $\pi_j\in\pi$ that contains $i_{j-1,\max}+1=: i_{j,\min}$ and write $i_{j,\max}$ for its maximal element. Then the set of blocks $\pi^{(j)}:= \{\pi_i\in\pi| \pi_j\leq \pi_i\}$ forms an irreducible non crossing partition of the set $\{i_{j,\min},i_{j,\min}+1,\ldots,i_{j,\max}\}$. Iterating this construction, we obtain a sequence of irreducible non-crossing partitions $\pi^{(1)},\ldots,\pi^{(\ell)}$ such that $\pi= \pi^{(1)}\cup\cdots\cup \pi^{(\ell)}$. The partitions $\pi^{(1)},\ldots,\pi^{(\ell)}$ are called the \textit{irreducible components of $\pi$}. Finally, the nesting forest of $\pi$, also denoted by $t(\pi)$ is given by the set of trees $\{t(\pi^{(1)}),\ldots,t(\pi^{(\ell)})\}$.
\par Given a non-crossing partition $\pi\in \NC(n)$, we can extend the natural order of its blocks to a total order. A \textit{monotone non-crossing partition of $[n]$} is a pair $(\pi,\lambda)$ such that $\pi\in \NC(n)$ and $\lambda: \pi\to [|\pi|]$ is an increasing bijection. The set of monotone non-crossing partitions on $[n]$ is denoted by $\mathrm{M}(n)$, and its elements will be denoted by $\pi\in \mathrm{M}(n)$, omitting $\lambda$. The set of monotone irreducible non-crossing partitions on $[n]$ is denoted by $\mathrm{M}^{\scriptscriptstyle\mathrm{irr}}(n)$.
\par Recall that, for any rooted tree $t$, the so-called \textit{tree factorial of $t$}, denoted by $t!$, is computed inductively: $t!:=1$ if $t$ is the single-vertex tree, and $t!= |t|t_1!\cdots t_s!$ in the case that $t$ is constructed by attaching rooted trees $t_1,\ldots,t_s$ to a new common root, with $|t|$ being the number of vertices of $t$. The tree factorial of a forest is the product of the tree factorials of the constituting trees. In the case of the nesting forest of a non-crossing partition $\pi$, the number of monotone non-crossing partitions associated to $\pi$ is given by the quantity $m(\pi) := |\pi|!/t(\pi)!$.


\subsection{Non-commutative probability}
\label{ssec:NCPB}

\begin{defi}
\label{def:ncprobaspace}
An \textit{algebraic (or non-commutative) probability space} $(\A,\varphi)$ consists of an associative unital algebra $\A$ and a linear functional $\varphi: \A \to \mathbb{K}$, mapping the algebra unit to the unit of the underlying base field $\mathbb{K}$, i.e., $\varphi(1_\A)=1_\mathbb{K}$. 
\end{defi}

Let $(\A,\varphi)$ be a non-commutative probability space and suppose $\mathbb{K}=\mathbb{C}$. An element $a\in \A$ is called \textit{random variable}. The \textit{distribution} of a tuple of random variables $(a_1,\ldots,a_k)$ is a linear functional on the complex algebra of non-commutative polynomials, $\mu:\mathbb{C}\langle X_1,\ldots,X_k\rangle \to\mathbb{C}$
$$
	\mu(X_{i_1}\cdots X_{i_n}) := \varphi(a_{i_1}\cdots a_{i_n})
$$ 
for any $n\geq1$ and $i_1,\ldots,i_n \in [k]$.

\smallskip

Several analogues of the notion of independence in classical probability have emerged in non-commutative probability. Notably, Muraki demonstrated in \cite{Mur} that only five such notions of independence satisfy certain natural axioms. Here, we will focus on three of these notions, which will be particularly relevant for our discussion.

\begin{defi}[Non-commutative notions of independence]
	Let $(\A,\varphi)$ be a non-commutative probability space and consider subalgebras $\A_1,\ldots,\A_k$ of $\A$.
	\par i) We say that $\A_1,\ldots,\A_k$ are \textit{freely independent} (or simply \textit{free}) if they are unital subalgebras of $\A$ and if
	$$
		\varphi(a_1\cdots a_n) = 0
	$$
	whenever $n\geq2$, $a_1\in \A_{i_1},\ldots,a_n\in \A_{i_n}$,  $i_r\neq i_{r+1}$ for $1\leq r<n$,  and $\varphi(a_i) = 0$ for any $1\leq i\leq n$.
	\par ii) We say that $\A_1,\ldots,\A_k$ are \textit{Boolean independent} if 
	$$
		\varphi(a_1\cdots a_n) = \varphi(a_1)\cdots \varphi(a_n)
	$$
	whenever $n\geq1$,  $a_1\in \A_{i_1},\ldots,a_n\in \A_{i_n}$, and  $i_r\neq i_{r+1}$ for $1\leq r<n$.
	\par iii) We say that $\A_1,\ldots,\A_k$ are \textit{monotone independent} if
	$$
		\varphi(a_1\cdots a_\ell\cdots a_n) = \varphi(a_\ell)\varphi(a_1\cdots a_{\ell-1}a_{\ell+1}\cdots a_n) 
	$$
	whenever $n\geq1$, $a_1\in \A_{i_1},\ldots,a_n\in \A_{i_n}$,  $i_r\neq i_{r+1}$ for $1\leq r<n$,  and $i_{\ell-1}<i_\ell>i_{\ell+1}$, where we omit the condition $i_{\ell-1}<i_\ell$ if $\ell=1$ and $i_\ell>i_{\ell+1}$ if $\ell=n$.
\end{defi}

\begin{rem}
	\label{rmk:freeRV}
Let $(\A,\varphi)$ be a non-commutative probability space and consider two random variables $a,b\in \A$. We say that $a$ and $b$ are \textit{free} if the unital subalgebras $\A_1$ and $\A_2$ generated by $a$ respectively $b$, are free. Analogous definitions can be made for the cases of Boolean and monotone independence, and the subsequent notions of conditionally free and monotone independence.
\end{rem}

Non-commutative cumulants can be defined for each notion of independence, and they have proven to be a crucial combinatorial tool in the study of non-commutative independent random variables.

\begin{defi}[Cumulants]
\label{def:cumulants}
	Let $(\A,\varphi)$ be a non-commutative probability space. 
	\par i) (\cite{Spe94}) \textit{Free cumulants of $(\A,\varphi)$} form a family of multilinear functionals $\{k_n:\A^n\to\mathbb{C}\}_{n\geq1}$ recursively defined by the moment-cumulant relations
\begin{equation}
\label{eq:freeMCrel}	
	\varphi(a_1\cdots a_n)= \sum_{\pi\in\NC(n)} \prod_{\pi_i \in \pi} k_{|\pi_i|}(a_{1},\ldots,a_{n}| \pi_i).
\end{equation}
\par ii) (\cite{SW97}) \textit{Boolean cumulants of $(\A,\varphi)$} form a family of multilinear functionals \linebreak$\{b_n:\A^n\to\mathbb{C}\}_{n\geq1}$ recursively defined by  the moment-cumulant relations
\begin{equation}
\label{eq:booleanMCrel}
	\varphi(a_1\cdots a_n)= \sum_{\pi\in\operatorname{I}(n)}  \prod_{\pi_i \in \pi} b_{|\pi_i|}(a_{1},\ldots,a_{n}| \pi_i).
\end{equation}
\par iii) (\cite{HS11}) \textit{Monotone cumulants of $(\A,\varphi)$} form a family of multilinear functionals \linebreak$\{h_n:\A^n\to\mathbb{C}\}_{n\geq1}$ recursively defined by the moment-cumulant relations
\begin{equation}
\label{eq:monotoneMCrel}
	\varphi(a_1\cdots a_n)= \sum_{\pi\in\NC(n)} \frac{1}{t(\pi)!} \prod_{\pi_i \in \pi} h_{|\pi_i|}(a_{1},\ldots,a_{n}| \pi_i).
\end{equation}
\end{defi}

\begin{rem}
\label{rmk:monotonepartitions}
i) Recall standard notation, which defines for $\pi=\{\pi_1,\ldots,\pi_m\}\in\NC(n)$
$$
	k_\pi(a_1,\ldots,a_n):=\prod_{\pi_i \in \pi} k_{|\pi_i|}(a_{1},\ldots,a_{n}| \pi_i),
$$
where $k_{|\pi_i|}(a_{1},\ldots,a_{n}| \pi_i) := k_{|\pi_i|}(a_{i_1},\ldots,a_{i_{|\pi_i|}})$, for $\pi_i=\{i_1,\ldots,i_{|\pi_i|}\}$. Analogously for Boolean and monotone cumulants.  Later we will also use word notation  
$$
	w_{\pi_i}=a_{i_1}\cdots a_{i_{|\pi_i|}}, \qquad  \pi_i=\{i_1,\ldots,i_{|\pi_i|}\} \in \pi \in \NC(n).
$$

ii) Let $\pi\in\NC(n)$ and consider its so-called nesting forest $t(\pi)$. Recall that $m(\pi) = |\pi|!/t(\pi)!$ is the number of monotone partitions associated to $\pi$. Thus, the moment-cumulant formula \eqref{eq:monotoneMCrel} for monotone cumulants can be written as
\begin{equation}
	\varphi(a_1
	\cdots a_n) = \sum_{\pi\in \text{M}(n)} \frac{1}{|\pi|!} h_\pi(a_1,\ldots,a_n),
\end{equation}
where the sum runs over so-called monotone partitions, i.e., non-crossing partitions equipped with a total order on their blocks thereby refining the natural partial order of the blocks.
\end{rem}

\begin{rem}
	Cumulants linearize additive convolution. More precisely, let $(\A,\varphi)$ be a non-commutative probability space and assume that $a,b\in \A$ are free random variables. If $\mu_a$ and $\mu_b$ are distributions of $a$ respectively $b$, then we define \textit{free additive convolution of $a$ and $b$}, denoted by $\mu_a \boxplus \mu_b$, to be the distribution of their sum, $a+b$. For more details about free additive convolution, the reader is referred to standard textbook \cite{NSp}. 
	\par The relation with free cumulants is as follows: take free cumulants $\{k_n\}_{n\geq1}$  on $(\A,\varphi)$ and let $\mu$ be the distribution of a random variable $x \in \A$. The sequence of free cumulants of $\mu$ is the sequence of complex numbers $\{k_n(\mu)\}_{n\geq1}$ given by $k_n(\mu):= k_n(x,\ldots,x)$, for any $n\geq1$. The property of free cumulants implies that
	$$
	k_n(\mu_a\boxplus \mu_b) = k_n(\mu_a)  + k_n(\mu_b), \quad\mbox{for any }\,n\geq1.
	$$
	Analogously, if $a$ and $b$ are Boolean independent random variables, the Boolean cumulants of the \textit{Boolean additive convolution} $\mu_a\uplus\mu_b$ satisfy
	$$
	b_n(\mu_a\uplus\mu_b) =  b_n(\mu_a)  + b_n(\mu_b), \quad\mbox{for any }\,n\geq1. 
	$$
	Note that an analogous result for the monotone case is not available in general. However, monotone cumulants linearize monotone convolution when $a$ and $b$ have the same distribution. In other words, if $a$ and $b$ are monotone independent random variables such that $\mu_a = \mu_b$, then its \textit{monotone additive convolution} $\mu_a\blacktriangleright \mu_b$, defined to be the distribution of the sum $a+b$, satisfies 
	$$
	h_n(\mu_a\blacktriangleright \mu_b) = h_n(\mu_a) + h_n(\mu_b) = 2h_n(\mu_a),\quad\mbox{for any }n\geq1.
	$$
\end{rem}

\begin{rem}
	One can show that the definitions of additive convolutions only depend on the distributions of the random variables. Hence, it makes sense to consider the additive convolutions $\mu_1\boxplus\mu_2$, $\mu_1\uplus\mu_2$ and $\mu_1\blacktriangleright \mu_2$ of any pair of linear functionals $\mu_1,\mu_2:\mathbb{C}[X]\to\mathbb{C}$ such that $\mu_1(1) = \mu_2(1) = 1$.
\end{rem}


\subsection{Conditionally free independence}
\label{ssec:condfree}

Conditionally free independence, introduced by Bo\.zejko and Speicher in \cite{BS91}, aims to unify free, Boolean, and monotone independence within an augmented framework. This framework involves a non-commutative probability space $(\A, \varphi)$, complemented by an additional unital linear functional $\psi$ with $\psi(1_\A) = 1$. Such a triple is called a \textit{conditionally non-commutative probability space} and forms the basis for the exploration of conditionally free independence.

\begin{defi}[\cite{BS91}]
	Let $(\A,\varphi,\psi)$ be a conditionally non-commutative probability space and $\A_1, \ldots,$ $\A_k$ be unital subalgebras of $\A$. We say that $\A_1, \ldots, \A_k$ are \textit{conditionally free} (or simply \textit{c-free}) if the following holds: for $n \geq 2$, any sequence of indices $i_1,\ldots, i_n \in [k]$ such that $i_j\neq  i_{j+1}$ for $1 \leq j < n$, and elements $a_1 \in \A_{i_1} ,\ldots, a_n \in \A_{i_n}$ such that $\psi(a_j ) = 0$ for $j = 1, \ldots, n$, the following is satisfied:
	\begin{enumerate}[label=\roman*)]
		\item $\psi(a_1\cdots a_n) = 0$, i.e.~$\A_1,\ldots,\A_k$ are free with respect to $\psi$;
		\item $\varphi(a_1 \cdots a_n) = \varphi(a_1) \cdots \varphi(a_n)$. 
	\end{enumerate}
\end{defi}

In the work \cite{BLS96}, the authors introduced the notion of cumulants for c-free independence that unifies free and Boolean cumulants.

\begin{defi}
\label{def:cfreecumulants}
Let $(\A,\varphi,\psi)$ be a conditionally non-commutative probability space. Its \textit{c-free cumulants} form the family of multilinear functionals $\{k^{(c)}_n : \A^n \to \mathbb{C}\}_{n\geq1}$ recursively defined by
$$
	\varphi(a_1\cdots a_n) = \sum_{\pi\in \NC(n)}
	\left(\prod_{\substack{\pi_i\in\pi\\\pi_i\,\mathrm{outer}}} k^{(c)}_{|\pi_i|}(a_1,\ldots,a_n|\pi_i) \right)
	\left(\prod_{\substack{\pi_j\in\pi\\\pi_j\,\mathrm{inner}}} k'_{|\pi_j|}(a_1,\ldots,a_n|\pi_j) \right),
$$
for any $n \geq 1$ and $a_1,\ldots,a_n\in\A$, where $\{k'_n\}_{n\geq1}$ are the free cumulants of $(\A, \psi)$.
\end{defi}

Let $a \in \A$ be a random variable in a conditionally non-commutative probability space $(\A,\varphi,\psi)$. In this framework, the notion of distribution of $a$ extends to a pair of linear functionals $(\mu,\nu)$ on $\mathbb{C}[X]$ given by
$$	
	\mu(X^n) 
	= \varphi(a^n)\quad\mbox{and }\quad \nu(X^n) 
	= \psi(a^n),\quad\mbox{ for any }n\geq0.
$$
In addition, it makes sense to define the c-free cumulants of the pair $(\mu,\nu)$ as the sequence of complex numbers $\{k^{(c)}_n(\mu,\nu)\}_{n\geq1}$ given by $k^{(c)}_n(\mu,\nu) = k^{(c)}_n(a,\ldots,a)$, for any $n\geq1$. 

\begin{defi}
\label{def:cfreeaddconv}
 Let $(\A,\varphi,\psi)$ be a conditionally non-commutative probability space with c-free cumulants $\{k^{(c)}_n\}_{n\geq1}$. Given two pairs of distributions $(\mu_1,\nu_1)$ and $(\mu_2,\nu_2)$, the \textit{c-free additive convolution} is defined to be the pair 
\begin{equation}
\label{eq:cfreeaddconv}
	(\mu,\nu)= (\mu_1,\nu_1)\boxplus  (\mu_2,\nu_2)
\end{equation} 
such that $\nu = \nu_1 \boxplus \nu_2$, and $\mu$ is the distribution such that 
$$
	k^{(c)}_n(\mu,\nu) 
	= k^{(c)}_n(\mu_1,\nu_1)+ k^{(c)}_n(\mu_2,\nu_2),\quad\mbox{for any }n\geq1.
$$
\end{defi} 

Note that c-free additive convolution \eqref{eq:cfreeaddconv} is sometimes denoted $\mu_1\! \leftindex_{\nu_1} \boxplus_{\nu_2} \mu_2 := \mu$. In addition, it can be shown that if $a$ and $b$ are c-free random variables with distributions $(\mu_1,\nu_1)$ respectively $(\mu_2,\nu_2)$,  then the distribution of the sum $a+b$ is given precisely by $(\mu_1,\nu_1)\boxplus  (\mu_2,\nu_2)$.   


\subsection{Conditionally monotone independence}

Although the notion of c-free independence also includes the notion of monotone independence \cite{Franz06}, monotone cumulants are not as easy to identify in terms of c-free cumulants, as free and Boolean cumulants are. In order to overcome this difficulty, Hasebe \cite{Has} introduce an analogue of c-free independence such that it identifies monotone and Boolean independence.

\begin{defi}[{\cite[Def.~3.8]{Has}}]
	\label{def:cmonotoneind}
	Let $(\A,\varphi,\psi)$ be a conditionally non-commutative probability space. We say that subalgebras $\A_1, \ldots, \A_k$ of $\A$ are \textit{conditionally monotone} (or simply \textit{c-monotone}) if the following properties are satisfied:
\begin{enumerate}[label=\roman*)]
	\item $\A_1,\ldots,\A_k$ are monotone independent with respect to $\psi$;
	\item $\varphi(a_1\cdots a_n) = \varphi(a_1)\varphi(a_2\cdots a_n)$ for 
	$a_1\in \A_{i_1},\ldots,a_n\in \A_{i_n}$, $i_j\in [k]$ and $i_1>i_2;$
	\item $\varphi(a_1\cdots a_n) = \varphi(a_1\cdots a_{n-1})\varphi(a_n)$ for 
	$a_1\in \A_{i_1},\ldots,a_n\in \A_{i_n}$, $i_j\in [k]$ and $i_{n-1}< i_n;$
	\item $\varphi(a_1\cdots a_n) = \varphi(a_1\cdots a_{j-1})\big(\varphi(a_j)-\psi(a_j)\big)\varphi(a_{j+1}\cdots a_n) 
	+ \psi(a_j)\varphi(a_1\cdots a_{j-1}a_{j+1}\cdots a_n)$ for 
	$a_1\in \A_{i_1},\ldots,a_n\in \A_{i_n}$, $i_j\in [k]$, with $i_{j-1}<i_j> i_{j+1}$, with $2\leq j< n$.
\end{enumerate}
\end{defi}

It can be easily verified from the preceding definition that in the case where $\varphi = \psi$, c-monotone independence implies monotone independence. Similarly, when $\psi = 0$, c-monotone independence implies Boolean independence. Consequently, c-monotone independence serves as a unifying concept for both monotone and Boolean independence. Analogously, Hasebe introduced the following notion of c-monotone cumulants, encompassing both monotone and Boolean cumulants as specific instances.

\begin{defi}[{\cite[Prop.~4.5]{Has}}]
\label{prop:Hasebe}
Let $(\A,\varphi,\psi)$ be a conditionally non-commutative probability space. Its \textit{c-monotone cumulants} form the family of multilinear functionals $\{h_n^{(c)}:\A^n\to\mathbb{C}\}_{n\geq1}$ recursively defined by the formula
\begin{equation}
\label{eq:CMonMC}
	\varphi(a_1\cdots a_n) = \sum_{\pi\in \NC(n)} \frac{1}{t(\pi)!} 
	\left(\prod_{\substack{\pi_i\in\pi\\\pi_i\,\mathrm{outer}}} h_{|\pi_i|}^{(c)}(a_1,\ldots,a_n|\pi_i ) \right)
	\left(\prod_{\substack{\pi_j\in\pi\\\pi_j\,\mathrm{inner}}} h'_{|\pi_j|}(a_1,\ldots,a_n|\pi_j) \right),
\end{equation}
for any $n\geq1$ and $a_1,\ldots,a_n\in\A$, where $\{h'_n\}_{n\geq1}$ are the monotone cumulants of $(\A,\psi)$.
\end{defi} 

More intriguingly, Hasebe introduced c-monotone cumulants to linearize the c-monotone analogue of c-free additive convolution, especially when dealing with identically distributed random variables. The definition of this c-monotone analogue is as follows.

\begin{defi}[\cite{Has}]
\label{def:cmonconv}
Let $(\A,\varphi,\psi)$ be a conditionally non-commutative probability space with c-monotone cumulants $\{h_n^{(c)}\}_{n\geq1}$. Given two pairs of distributions $(\mu_1,\nu_1)$ and $(\mu_2,\nu_2)$, the \textit{c-monotone additive convolution} is defined to be the pair of distributions $(\mu,\nu) $  given by
$$
	(\mu,\nu) 
	:= (\mu_1\! \leftindex_{\delta_0}\boxplus_{\nu_2} \mu_2, \nu_1\blacktriangleright \nu_2),
$$
where $\delta_0$ stands for the distribution of $0\in\mathbb{C}\subseteq \A$.
\end{defi}
	
The c-monotone additive convolution is denoted  $(\mu_1,\nu_1)\blacktriangleright (\mu_2,\nu_2) := (\mu,\nu)$. Analogously to the c-free case,  it can be shown that if $a$ and $b$ are c-monotone independent random variables with distributions $(\mu_1,\nu_1)$ and $(\mu_2,\nu_2)$, respectively, then the distribution of $a+b$ is given precisely by $(\mu_1,\nu_1) \blacktriangleright (\mu_2,\nu_2)$.




\section{Shuffle-algebraic approach to non-commutative probability}
\label{sec:shuffle}

We describe now the Hopf-algebraic approach to non-commutative probability \cite{EFP1,EFP2,EFP3,EFP4}.


\subsection{Double tensor Hopf algebra construction} 

Let $(\A,\varphi)$ be a non-commutative probability space. We consider the \textit{double tensor Hopf algebra over $\A$} defined on the vector space
$$
	T(T_+(\A)):= \bigoplus_{n\geq0} T_+(\A)^{\otimes n},\qquad\mbox{ where }
	\quad 
	T_+(\A):= \bigoplus_{n\geq1} \A^{\otimes n}.
$$
For notational convenience, pure tensors in $T_+(\A)$ are written as words $w=a_1 \cdots a_n \in \A^{\otimes n}$ omiting the tensor symbol. On the other hand, pure tensors in $T(T_+(\A))$ are written using a bar $|$, i.e.~if $w_1,w_2,\cdots,w_m\in T_+(\A)$, then we write $w_1|w_2|\cdots | w_m \in T_+(\A)^{\otimes m}$.
\\\par It turns out that we can endow the space $T(T_+(\A))$ with a graded Hopf algebra structure in the following way. As an algebra, the associative non-commutative product on $T(T_+(\A))$ is given by concatenation using the bar $|$. More precisely, if $v = v_1|\cdots |v_n$ and $w = w_1|\cdots |w_m$ are elements in $T(T_+(\A))$, then its product its defined to be
$$ 
	v|w := v_1|\cdots|v_n|w_1|\cdots | w_m \in T(T_+(\A)).
$$
It is easy to see that the empty word, denoted by $\uno$ works as a unit for the product. 

In order to describe the coproduct, we need the following notation. Let $w=a_1\cdots a_n\in \A^{\otimes n}$. For each $S =\{s_1<\cdots <s_l\}\subseteq [n]$, we denote $w_S := a_{s_1}\cdots a_{s_l}$. Also, if $J_1,\ldots,J_r$ stand for the connected components of $[n]\backslash S$ ordered according to their minimal elements, then we define the coproduct by
\begin{equation}
\label{eq:coproduct}
	\Delta(w) := \sum_{S\subseteq[n]} w_S \otimes w_{J_1}|\cdots |w_{J_r}.
\end{equation}
The definition of $\Delta$ is then extended linearly and multiplicatively in order to define a map $\Delta: T(T_+(\A))\to T(T_+(\A))\otimes T(T_+(\A))$. The counit is simply defined to be the unital algebra morphism $\epsilon: T(T_+(\A))\to\mathbb{C}$ given by $\epsilon(\uno)=1$ and $\epsilon(w)=0$ if $w\in T_+(\A)$.
\\\par As a vector space, $T(T_+(\A))$ is graded in the sense that
$$
	T(T_+(\A)) = \bigoplus_{n\geq0} T(T_+(\A))_n,
$$
where
$$
	T(T_+(\A))_n := \bigoplus_{n_1+\cdots+n_k=n} 
		T_+(\A)^{\otimes n_1}\otimes \cdots \otimes T_+(\A)^{\otimes n_k}.
$$
Moreover, the product and the coproduct respect the grading. By noticing that it is \textit{connected} in the sense that $T(T_+(\A))_0\cong \mathbb{C}$, one concludes that $(T(T_+(\A)),|,\Delta)$ is a connected graded bialgebra and hence a Hopf algebra, called the \textit{double tensor Hopf algebra of $\A$}.
\\\par A fundamental property of the coproduct \eqref{eq:coproduct} is its splitting into two parts: $\Delta = \Delta_\prec + \Delta_\succ$, where
$$
	\Delta_\prec(w) 
	= \sum_{1\in S\subseteq [n]} w_S\otimes w_{J_1}|\cdots |w_{J_r},
$$
for any word $w=a_1\cdots a_n \in T_+(\A)$. The triple $(T(T_+(\A)),\Delta_\prec,\Delta_\succ)$ fulfils the axioms of \textit{unital unshuffle (or dendriform) bialgebra}. For further details, including the precise definition of unshuffle bialgebras, we refer the reader to \cite[Thm.~5]{EFP1}. For our purposes, it suffices to translate the axioms of unshuffle bialgebras to the algebraic dual of the double tensor Hopf algebra. More precisely, for linear maps $f,g\in \operatorname{Hom}(T(T_+(\A)),\mathbb{C})$, the coproduct in \eqref{eq:coproduct} permits to define the convolution product 
\begin{equation}
\label{eq:convproduct}
	f*g := m_\mathbb{C}\circ(f\otimes g)\circ \Delta,
\end{equation}
where $m_\mathbb{C}$ stands for the product on $\mathbb{C}$. Coassociativity of $\Delta$ implies that the convolution product \eqref{eq:convproduct} is associative, with the counit $\epsilon$ being its unit. The splitting of the coproduct $\Delta$ allows to define two additional products, called left and right half-shuffles
\begin{equation}
\label{eq:halfshuffles}
	f\prec g := m_\mathbb{C}\circ (f\otimes g)\circ \Delta_\prec,
	\qquad 
	f\succ g:= m_\mathbb{C}\circ (f\otimes g)\circ \Delta_\succ,
\end{equation}
for $f,g\neq\epsilon$, and defining for $f \neq \epsilon$
$$
	f\prec \epsilon= f=\epsilon\succ f, 
	\quad
	\epsilon\prec f = 0 = f\succ \epsilon.
$$
One can check that neither of the products \eqref{eq:halfshuffles} is associative. However, they satisfy the so-called \textit{shuffle axioms}
\begin{equation}
\label{eq:shuffle}
	(f\prec g)\prec h = f\prec(g*h),
	\quad 
	(f\succ g)\prec h = f\succ (g\prec h),
	\quad 
	f\succ(g\succ h) = (f*g)\succ h,
\end{equation}
for any $f,g,h\in \operatorname{Hom}(T(T_+(\A)),\mathbb{C})$. In general, any vector space $D$ endowed with half-shuffle products $\prec,\succ$ satisfying the relations \eqref{eq:shuffle}, where $a*b := a\prec b + a\succ b$, is called a non-commutative \textit{shuffle algebra}. In other words, we have:

\begin{prop}[\cite{EFP1}]
\label{prop:shufflealg}
$(\operatorname{Hom}(T(T_+(\A)),\mathbb{C}),\prec,\succ)$ is a unital non-commutative shuffle algebra.
\end{prop}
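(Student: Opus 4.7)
The plan is to derive the shuffle algebra structure on $\operatorname{Hom}(T(T_+(\A)),\mathbb{C})$ by dualizing the unshuffle bialgebra structure on $T(T_+(\A))$ that is obtained in \cite[Thm.~5]{EFP1}. The key point is that the three shuffle axioms in \eqref{eq:shuffle} are the convolutional duals of the three co-shuffle axioms satisfied by the split coproduct $(\Delta_\prec,\Delta_\succ)$, so it suffices to check these on the coalgebra side and then apply the defining property of the convolution product.

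First I would verify, purely combinatorially, the three co-shuffle identities
\begin{align*}
(\Delta_\prec\otimes\id)\circ\Delta_\prec &= (\id\otimes\Delta)\circ\Delta_\prec,\\
(\Delta_\succ\otimes\id)\circ\Delta_\prec &= (\id\otimes\Delta_\prec)\circ\Delta_\succ,\\
(\Delta\otimes\id)\circ\Delta_\succ &= (\id\otimes\Delta_\succ)\circ\Delta_\succ,
\end{align*}
for words $w = a_1\cdots a_n \in T_+(\A)$. Applying $\Delta$ (resp.\ $\Delta_\prec$, $\Delta_\succ$) twice unfolds into a sum over pairs of nested subsets $S' \subseteq S \subseteq [n]$, and the connected components of the complements can be regrouped in the two possible ways corresponding to the two sides of each identity. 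The $\prec/\succ$-selection rule, namely whether or not the index $1$ lies in the chosen subset at each step, exactly matches the combinatorics of the decomposition of $[n]\setminus S'$ and $[n]\setminus S$ into connected intervals; this is the routine case analysis underlying the unshuffle bialgebra axioms.

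Once the co-shuffle identities hold, I would dualize: for $f,g,h$ nonzero on some homogeneous component, applying $m_\mathbb{C}\circ (f\otimes g\otimes h)\circ(\,\cdot\,)$ to both sides of each co-shuffle identity and using the defining formulas \eqref{eq:convproduct}--\eqref{eq:halfshuffles} together with the coassociativity of tensor products of scalars yields precisely the three shuffle axioms. For instance, the first co-shuffle identity gives
\[
(f\prec g)\prec h = m_\mathbb{C}^{(2)}\circ(f\otimes g\otimes h)\circ(\Delta_\prec\otimes\id)\circ\Delta_\prec = m_\mathbb{C}^{(2)}\circ(f\otimes g\otimes h)\circ(\id\otimes\Delta)\circ\Delta_\prec = f\prec(g*h),
\]
and the other two are analogous.

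Finally, I would treat the counit $\epsilon$ separately. Because $\Delta_\prec$ and $\Delta_\succ$ vanish on $\uno$ (the only primitive-like behaviour at the unit is absorbed into the $\epsilon$-conventions $f\prec\epsilon=f=\epsilon\succ f$ and $\epsilon\prec f=0=f\succ\epsilon$ for $f\neq\epsilon$), one checks directly that these conventions are compatible with the shuffle axioms when one or more of $f,g,h$ equals $\epsilon$, and that $\epsilon$ is the two-sided unit for the full convolution $*=\prec+\succ$. The only subtlety I foresee is bookkeeping at the unit: ensuring the conventions for $\epsilon$ are consistent with the three shuffle axioms across all placements of $\epsilon$ among $f,g,h$, but this is a finite case check that causes no genuine difficulty.
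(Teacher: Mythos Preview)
Your proposal is correct and is precisely the standard dualization argument: verify the co-shuffle (unshuffle bialgebra) identities for $(\Delta_\prec,\Delta_\succ)$ on $T(T_+(\A))$ and then pull them back via $m_\mathbb{C}\circ(f\otimes g\otimes h)$ to obtain the shuffle axioms \eqref{eq:shuffle}, handling $\epsilon$ by the stated conventions. Note, however, that the paper does not actually supply its own proof of this proposition; it merely states the result with a reference to \cite{EFP1} (where the unshuffle bialgebra structure is established as Theorem~5), so there is nothing further to compare against beyond observing that your outline matches the approach of the cited source.
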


In $\operatorname{Hom}(T(T_+(\A)),\mathbb{C})$, we can identify a special group and Lie algebra as follows. Let $G$ be the set of \textit{characters on $T(T_+(\A))$}, i.e.~$\Phi\in G$ if $\Phi$ is a unital algebra morphism. On the other hand, let $\mathfrak{g}$ be the set of \textit{infinitesimal characters on $T(T_+(\A))$}, i.e.~$\alpha\in\mathfrak{g}$ if $\alpha(\uno)=0=\alpha(w_1|\cdots|w_k)$ for any $k\geq2$ and $w_1,\ldots,w_k\in T_+(\A)$. It is possible to show that $G$ is a group with respect to the convolution product \eqref{eq:convproduct}, and that $\mathfrak{g}$ is a Lie algebra with respect to the Lie bracket $[\alpha_1,\alpha_2] = \alpha_1*\alpha_2 - \alpha_2*\alpha_1$. In addition, there exists a bijection between $G$ and $\mathfrak{g}$ given by the exponential map with respect to the convolution product:
$$
	\mathfrak{g}\ni \alpha \mapsto \exp^*(\alpha)
	:=\sum_{n\geq0} \frac{\alpha^{*n}}{n!}\in G.
$$
In addition, the half-shuffle products $\prec$ and $\succ$ in $\operatorname{Hom}(T(T_+(\A)),\mathbb{C})$ allow to define for any $\alpha \in \mathfrak{g}$, the \textit{left} and \textit{right half-shuffle exponential maps}
$$
	\E_\prec(\alpha) 
	= \sum_{n\geq0} \alpha^{\prec n},\quad\quad \E_\succ(\alpha) 
	= \sum_{n\geq0} \alpha^{\succ n},
$$
where $\alpha^{\prec n} := \alpha \prec (\alpha^{\prec n-1})$ for $n\geq1$, and $\alpha^{\prec 0}:=\epsilon$; analogously for $\alpha^{\succ n}$. It turns out that the half-shuffle exponential maps also provide bijections between $\mathfrak{g}$ and $G$. The main result in \cite{EFP1} is stated in the following theorem.

\begin{theo}[\cite{EFP1,EFP2}]
\label{thm:mainEFP}
For $\Phi \in G$ a character, there exists a unique triple $(\kappa,\beta,\rho)$ of infinitesimal characters in $ \mathfrak{g}$ such that
\begin{equation}
\label{eq:mainEFP}
	\Phi = \exp^*(\rho) = \E_\prec(\kappa) = \E_\succ(\beta).
\end{equation}
In particular, the infinitesimal characters $\kappa$ and $\beta$ are the unique solutions to the fixed point equations
\begin{equation}
\label{eq:FixedEq}
	\Phi = \epsilon +\kappa\prec\Phi\quad\mbox{ and }\quad \Phi=\epsilon + \Phi\succ\beta,
\end{equation}
respectively. Conversely, we have that $\exp^*(\alpha)$, $\E_\prec(\alpha)$ and $\E_\succ(\alpha)$ are characters, for any $\alpha\in\mathfrak{g}$.
\end{theo}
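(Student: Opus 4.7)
The plan is to deduce the theorem by combining the connected graded structure of $T(T_+(\A))$ with the shuffle axioms \eqref{eq:shuffle}. Because $T(T_+(\A))$ is connected graded with finite-dimensional graded components, every linear form $f$ with $f(\uno)=0$ is locally nilpotent for each of $\prec, \succ, *$ on each $T(T_+(\A))_n$, so all the series $\exp^*$, $\E_\prec$, $\E_\succ$ and their formal inverses are well-defined after truncation in degree.

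First I would derive the fixed-point identities directly from the defining series. Using the recursion $\kappa^{\prec n} = \kappa \prec \kappa^{\prec(n-1)}$ together with $\kappa \prec \epsilon = \kappa$, one factors
$$
\E_\prec(\kappa) - \epsilon = \sum_{n\geq 1} \kappa \prec \kappa^{\prec(n-1)} = \kappa \prec \E_\prec(\kappa),
$$
which is the first equation in \eqref{eq:FixedEq}. A mirror computation for the right half-shuffle (with the right-iterated convention so that $\epsilon \succ \beta = \beta$ initializes the series) gives $\E_\succ(\beta) = \epsilon + \E_\succ(\beta) \succ \beta$.

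Next I would address uniqueness and inversion. Both fixed-point equations are triangular with respect to the grading, since $\Delta_\prec$ and $\Delta_\succ$ each produce one tensor factor of strictly smaller degree; hence, given $\Phi \in G$, the values of $\kappa$ on $T(T_+(\A))_n$ are uniquely pinned down by $\Phi$ on lower degrees. For existence, the convolution inverse $\Phi^{*-1}$ is supplied by the antipode of the Hopf algebra. Setting $\kappa := (\Phi - \epsilon) \prec \Phi^{*-1}$, the left shuffle axiom gives
$$
\kappa \prec \Phi = (\Phi - \epsilon) \prec (\Phi^{*-1} * \Phi) = (\Phi - \epsilon) \prec \epsilon = \Phi - \epsilon,
$$
so the fixed-point equation is satisfied. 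The symmetric formula $\beta := \Phi^{*-1} \succ (\Phi - \epsilon)$ is verified using the shuffle axiom $f \succ (g \succ h) = (f*g) \succ h$. Existence and uniqueness of $\rho = \log^*(\Phi)$ is the standard exponential-logarithm correspondence in connected graded bialgebras.

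The main obstacle is showing that the resulting $\kappa, \beta, \rho$ actually lie in the Lie algebra $\mathfrak{g}$ of infinitesimal characters; by the bijections just established, this is equivalent to the converse direction, namely that $\exp^*, \E_\prec, \E_\succ$ send $\mathfrak{g}$ into $G$. For $\exp^*$ this is the classical computation: expanding $\exp^*(\alpha)(v|w)$ via the multiplicativity of $\Delta$ with respect to concatenation and the derivation property $\alpha(uv) = \alpha(u)\epsilon(v) + \epsilon(u)\alpha(v)$ gives a binomial-type identity that factors as $\exp^*(\alpha)(v)\exp^*(\alpha)(w)$. For $\E_\prec(\alpha)$ and $\E_\succ(\alpha)$ the same strategy succeeds, but it relies crucially on the finer unshuffle bialgebra axioms established in \cite[Thm.~5]{EFP1}---the compatibility of $\Delta_\prec$ and $\Delta_\succ$ with concatenation---which are precisely what is needed for the half-shuffle iterates $\alpha^{\prec n}$ and $\alpha^{\succ n}$ to transform multiplicatively on concatenated words. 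Once this converse is verified, each of the three maps restricts to a bijection $\mathfrak{g} \cong G$, completing the proof.
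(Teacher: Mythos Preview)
The paper does not supply its own proof of this theorem: it is quoted from \cite{EFP1,EFP2} as a foundational result, so there is no in-paper argument to compare against. Your sketch is essentially the argument given in those references---derive the fixed-point equations from the series, invert them via the half-shuffle logarithms $\kappa=(\Phi-\epsilon)\prec\Phi^{*-1}$ and $\beta=\Phi^{*-1}\succ(\Phi-\epsilon)$ (these are exactly the maps recorded later in \cref{rem:shuffleLogs}), and appeal to the unshuffle bialgebra compatibility for the character property of the half-shuffle exponentials.

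Two small imprecisions are worth flagging. First, the graded components of $T(T_+(\A))$ need not be finite-dimensional (nothing forces $\A$ to be); the series converge because evaluation on any word of degree $n$ truncates after at most $n$ iterations, which follows from the grading alone. Second, your claim that $\Delta_\prec$ produces a tensor factor of \emph{strictly} smaller degree is not literally true: the term with $S=[n]$ contributes $w\otimes\uno$. What makes the recursion triangular is that this term yields $\kappa(w)\Phi(\uno)=\kappa(w)$ on the right-hand side of \eqref{eq:FixedEq}, so $\kappa(w)$ is determined by $\Phi(w)$ and values of $\kappa$ on strictly shorter words. Neither point affects the validity of your overall strategy.
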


Now, given a non-commutative probability space $(\A,\varphi)$, consider the double tensor Hopf algebra $T(T_+(\A))$ as well as  a character $\Phi$ on $T(T_+(\A))$ defined by the recipe $\Phi(w) = \varphi(a_1\cdots a_n)$, for any word $w=a_1\cdots a_n\in \A^{\otimes n}$. Note that in the argument of $\varphi$, we are considering the product on $\A$ of the random variables $a_1,\ldots,a_n\in \A$. Furthermore, given a family of multilinear functionals $\{f_n:\A^n\to\mathbb{C}\}_{n\ge 1}$, we define the infinitesimal character $\alpha \in \mathfrak{g}$ associated to the family $\{f_n\}_{n\geq1}$ by defining $\alpha(w) := f_n(a_1,\ldots,a_n)$, for any word $w=a_1\cdots a_n\in \A^{\otimes n}$. The next theorem shows that the relations between moments and cumulants in non-commutative probability can be described through the bijections between the group $G$ of characters and the Lie algebra $ \mathfrak{g}$ of infinitesimal characters of the double tensor Hopf algebra.

\begin{theo}[\cite{EFP1,EFP2}]
\label{thm:linkNCP}
Let $(\A,\varphi)$ be a non-commutative probability space and consider the double tensor Hopf algebra $T(T_+(\A))$. Let $\Phi$ be the character on $T(T_+(\A))$ extending $\varphi$ as described above, and let $(\kappa,\beta,\rho)$ be the unique triple of infinitesimal characters given in \cref{thm:mainEFP}. Then for any word $w=a_1\cdots a_n\in \A^{\otimes n}$, we have
$$
	\kappa(w) 
	= k_n(a_1,\ldots,a_n),\quad \beta(w) 
	= b_n(a_1,\ldots,a_n),\quad \rho(w) = h_n(a_1,\ldots,a_n).
$$
In other words, $\kappa$, $\beta$ and $\rho$ are the infinitesimal characters on $T(T_+(\A))$ associated to the free, Boolean, and monotone cumulants, respectively.
\end{theo}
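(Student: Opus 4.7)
The plan is to prove each of the three identifications by evaluating the corresponding shuffle-algebraic equation from \Cref{thm:mainEFP} on a generic word $w = a_1 \cdots a_n \in \A^{\otimes n}$ and checking, by induction on $n$, that the resulting recursion matches the moment-cumulant formula from \Cref{def:cumulants}. The base case $n=1$ is immediate in all three cases, as $\varphi(a_1) = \kappa(a_1) = \beta(a_1) = \rho(a_1)$ by direct evaluation of \eqref{eq:mainEFP} on a length-one word.

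\textbf{Boolean case.} I would start from the fixed-point equation $\Phi = \epsilon + \Phi \succ \beta$ in \eqref{eq:FixedEq}. Writing $\Delta_\succ(w) = \Delta(w) - \Delta_\prec(w) = \sum_{S\subseteq[n],\,1\notin S} w_S \otimes w_{J_1}|\cdots|w_{J_r}$ and using that $\beta$ is an infinitesimal character, the factor $\beta(w_{J_1}|\cdots|w_{J_r})$ vanishes unless $r = 1$ and the single interval is nonempty. Since $1 \in [n]\setminus S$, this forces the interval to be $\{1,\ldots,j\}$ and $S = \{j+1,\ldots,n\}$. The fixed point reduces to $\Phi(w) = \sum_{j=1}^n \beta(a_1\cdots a_j)\,\Phi(a_{j+1}\cdots a_n)$, which is precisely \eqref{eq:booleanMCrel} after isolating the interval block containing $1$. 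By induction on $n$, matching this recursion against \eqref{eq:booleanMCrel} and invoking the uniqueness of $\beta$ from \Cref{thm:mainEFP} yields $\beta(w) = b_n(a_1,\ldots,a_n)$.

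\textbf{Free case.} The argument is symmetric, starting from $\Phi = \epsilon + \kappa \prec \Phi$ and $\Delta_\prec(w) = \sum_{S\ni 1} w_S \otimes w_{J_1}|\cdots|w_{J_r}$. Since $\Phi$ is a character, $\Phi(w_{J_1}|\cdots|w_{J_r}) = \prod_i \Phi(w_{J_i})$, while $\kappa(w_S)$ is simply an evaluation on the single word $w_S$. The equation becomes $\Phi(w) = \sum_{S\ni 1} \kappa(w_S) \prod_i \Phi(w_{J_i})$, which is exactly the decomposition of an arbitrary non-crossing partition of $[n]$ into its unique outer block $S \ni 1$ and the induced non-crossing partitions of the gaps $J_1,\ldots,J_r$. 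Applying the inductive hypothesis to each $\Phi(w_{J_i})$ reconstructs \eqref{eq:freeMCrel}, identifying $\kappa(w) = k_n(a_1,\ldots,a_n)$.

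\textbf{Monotone case.} Here I would expand $\Phi(w) = \exp^*(\rho)(w) = \sum_{k\geq 0} \rho^{*k}(w)/k!$ and compute $\rho^{*k}(w)$ from the iterated coproduct $\Delta^{(k-1)}(w)$. The key combinatorial fact is that a term in $\Delta^{(k-1)}(w)$ of the form $v_1 \otimes \cdots \otimes v_k$ survives evaluation by $\rho^{\otimes k}$ only when each $v_i$ is a single nonempty word, and the admissible configurations are in bijection with monotone non-crossing partitions $(\pi,\lambda) \in \mathrm{M}(n)$ with $|\pi| = k$, giving $\rho^{*k}(w) = \sum_{(\pi,\lambda) \in \mathrm{M}(n),\,|\pi|=k} \rho_\pi(w)$. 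Grouping by the underlying non-crossing partition and using $m(\pi) = |\pi|!/t(\pi)!$ from \Cref{ssec:prelim}, one obtains $\Phi(w) = \sum_{\pi \in \mathrm{NC}(n)} \rho_\pi(w)/t(\pi)!$, matching \eqref{eq:monotoneMCrel}; induction then yields $\rho(w) = h_n(a_1,\ldots,a_n)$.

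\textbf{Expected main obstacle.} The Boolean and free cases are essentially one-step unpackings of the fixed-point equations. The substantive work is the monotone case, specifically establishing the bijection between nonzero contributions to $\rho^{*k}(w)$ and monotone non-crossing partitions with $k$ blocks. One must verify carefully that (i) the infinitesimal-character condition on $\rho$ forces each tensor factor produced by the iterated coproduct to correspond to a single block of a non-crossing partition of $[n]$, and (ii) the left-to-right order in which the copies of $\rho$ act coincides with a total order refining the natural nesting order on blocks. Once this combinatorial identification is established, the $1/k!$ normalization in $\exp^*$ combines with the $|\pi|!$ choices of total refinement to leave exactly the weight $1/t(\pi)!$ required by \eqref{eq:monotoneMCrel}.
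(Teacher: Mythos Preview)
Your proposal is correct and matches the approach the paper has in mind. Note that the paper does not actually prove \Cref{thm:linkNCP}; it is cited from \cite{EFP1,EFP2}, and the paper merely remarks that ``the moment-cumulant relations described in \Cref{def:cumulants} are obtained from \eqref{eq:mainEFP} when evaluated in a word $w=a_1\cdots a_n\in T_+(\A)$.'' Your three cases do exactly that, and the paper itself uses the same style of argument when proving the analogous \Cref{prop:46P} (via the fixed-point equation $\Phi = \epsilon + \Phi\succ W_{\rho'}(\mathsf{P})$) and \Cref{lem:Wrho} (via \Cref{lem:iterated}, which is precisely the combinatorial bijection you identify as the main obstacle in the monotone case).
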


The above theorem establishes that the moment-cumulant relations described in \cref{def:cumulants} are obtained from \eqref{eq:mainEFP} when evaluated in a word $w=a_1\cdots a_n\in T_+(\A)$. In conclusion, \cref{thm:linkNCP} provides an algebraic framework for moments and cumulants in non-commutative probability. 
\\\par One of the first applications of the shuffle algebraic framework consists in deriving combinatorial formulas between the different brands of cumulants, studied in detail by Arizmendi et al in \cite{AHLV}. Indeed, let $(\A,\varphi)$ be a non-commutative probability space. From the fixed point equations \eqref{eq:FixedEq} and the shuffle identities \eqref{eq:shuffle}, it is not difficult to deduce that 
\begin{equation}
\label{eq:FreeToBool}
	\beta = \Phi^{*-1}\succ\kappa \prec \Phi 
	\qquad\mbox{ and }\qquad 
	\kappa = \Phi\succ\beta\prec \Phi^{*-1}.
\end{equation}
The inverse $\Phi^{*-1} \in G$ is defined with respect to the shuffle product \eqref{eq:convproduct}. By evaluating in a word $w=a_1\cdots a_n\in T_+(\A)$, it is possible to obtain the known cumulant-cumulant formulas (\cite{AHLV})
$$
	b_n(a_1,\ldots,a_n) = \sum_{\pi\in\NCirr(n)} k_\pi(a_1,\ldots,a_n)
$$
and
$$  
	k_n(a_1,\ldots,a_n) = \sum_{\pi\in\NCirr(n)} (-1)^{|\pi|-1}b_\pi(a_1,\ldots,a_n).
$$
In general, we have that the evaluation of the shuffle adjoint action  of the group $G$ on the Lie algebra $\mathfrak{g}$
$$
	\theta_\Phi(\alpha) := \Phi^{*-1}\succ\alpha\prec \Phi,
$$ 
for any character $\Phi \in G$ and any infinitesimal character $\alpha \in \mathfrak{g}$, has the following combinatorial expression.

\begin{lemma}[{\cite[Lem.~15]{EFP4}}]
\label{lem:auxL15}
Let $\Phi\in G$ be a character on the double tensor Hopf algebra $T(T_+(\A))$ such that $\Phi = \E_\prec(\kappa) = \E_\succ(\beta)$, and let $\alpha\in\mathfrak{g}$ be an infinitesimal character. Then, for any word $w=a_1\cdots a_n \in T_+(\A)$ we have that
\begin{eqnarray*}
	\big(\Phi^{*-1}\succ  \alpha \prec \Phi\big)(w) 
	&=& \sum_{\substack{\pi\in \NCirr(n)\\ 1\in \pi_1}} \alpha(w_{\pi_1}) \prod_{\substack{\pi_j\in \pi\\ j\neq 1}} \kappa(w_{\pi_j}),\\ 
	\big(\Phi\succ \alpha \prec \Phi^{*-1}\big)(w) 
	&=& \sum_{\substack{\pi\in \NCirr(n)\\ 1\in \pi_1}} (-1)^{|\pi|-1}\alpha(w_{\pi_1}) \prod_{\substack{\pi_j\in \pi\\ j\neq 1}} \beta(w_{\pi_j}).
\end{eqnarray*} 
\end{lemma}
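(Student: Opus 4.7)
The plan is to reduce each identity to a shuffle-algebraic equation of the form \emph{$\Psi\succ\bullet = {}$something explicit}, via the shuffle axioms \eqref{eq:shuffle} and $\Phi*\Phi^{*-1}=\epsilon$, and then to match the two sides combinatorially using the moment-cumulant formulas for $\Phi$ and $\Phi^{*-1}$. Setting $f:=\Phi^{*-1}\succ\alpha\prec\Phi$ and $g:=\Phi\succ\alpha\prec\Phi^{*-1}$, one first checks that both $f$ and $g$ are themselves infinitesimal characters, since the shuffle-adjoint action preserves $\mathfrak{g}$ (as used throughout \cite{EFP1,EFP4}). An application of the shuffle axioms $(p\succ q)\prec r = p\succ(q\prec r)$ and $p\succ(q\succ r)=(p*q)\succ r$ then yields the two characterizing equations
$$
\Phi\succ f \;=\; (\Phi*\Phi^{*-1})\succ(\alpha\prec\Phi) \;=\; \alpha\prec\Phi,
\qquad
\Phi^{*-1}\succ g \;=\; \alpha\prec\Phi^{*-1}.
$$

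For the first formula I will evaluate both sides of $\Phi\succ f=\alpha\prec\Phi$ at $w=a_1\cdots a_n$. Since $f\in\mathfrak{g}$, it vanishes on bar-products of length at least $2$, and the $\Delta_\succ$-expansion collapses to $(\Phi\succ f)(w) = \sum_{k=1}^{n} f(a_1\cdots a_k)\,\Phi(a_{k+1}\cdots a_n)$. On the other hand, using the definition of $\prec$, the multiplicativity of $\Phi$ on bar-products, and the free moment-cumulant formula $\Phi(v)=\sum_{\sigma\in\NC(|v|)}\kappa_\sigma(v)$ applied to each gap, one obtains
$$
(\alpha\prec\Phi)(w) \;=\; \sum_{\pi\in\NC(n),\;1\in\pi_1} \alpha(w_{\pi_1}) \prod_{j\neq 1}\kappa(w_{\pi_j}),
$$
where the subset $S\ni 1$ arising in the $\prec$-expansion is identified with the first block $\pi_1$ of $\pi$, and the non-crossing partitions chosen inside the gaps of $\pi_1$ supply the remaining blocks. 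Splitting this sum by $k:=\max(\pi_1)$, the blocks of $\pi$ lying in $[k]$ form an irreducible non-crossing partition of $[k]$ (since $1,k\in\pi_1$), while the blocks in $\{k+1,\dots,n\}$ reassemble, by the free moment-cumulant formula, into $\Phi(a_{k+1}\cdots a_n)$. Matching the two resulting decompositions and arguing by induction on the length of $w$—the $k<n$ contributions coincide by the inductive hypothesis, and the $k=n$ term uses $\Phi(\uno)=1$—isolates $f(a_1\cdots a_n)$ as the desired sum over $\NCirr(n)$.

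For the second formula the scheme is parallel. Infinitesimality of $g$ gives $(\Phi^{*-1}\succ g)(w) = \sum_{k=1}^{n} g(a_1\cdots a_k)\,\Phi^{*-1}(a_{k+1}\cdots a_n)$, and expanding $\alpha\prec\Phi^{*-1}$ requires the auxiliary identity
$$
\Phi^{*-1}(v) \;=\; \sum_{\sigma\in\NC(|v|)}(-1)^{|\sigma|}\beta_\sigma(v),
$$
equivalently $\Phi^{*-1}=\E_\prec(-\beta)$, which exhibits $-\beta$ as the family of free cumulants of the inverse character. Substituting this expansion (so that each block of $\pi$ other than $\pi_1$ contributes a sign, producing the overall factor $(-1)^{|\pi|-1}$), then splitting again by $k:=\max(\pi_1)$ to separate an irreducible factor on $[k]$ from a $\Phi^{*-1}(a_{k+1}\cdots a_n)$ tail, and finally matching by the same inductive argument, delivers the second identity.

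The main technical obstacle will be establishing the auxiliary identity $\Phi^{*-1}=\E_\prec(-\beta)$; this can be verified either inductively, starting from $\Phi\succ\beta=\Phi-\epsilon$ and $\Phi*\Phi^{*-1}=\epsilon$, or by a Möbius-inversion argument in the lattice $\NC(n)$. The rest is combinatorial book-keeping, hinging on the bijection between non-crossing partitions $\pi$ with $1\in\pi_1$ and data consisting of a subset $\pi_1\ni 1$ together with non-crossing partitions on each of its gaps.
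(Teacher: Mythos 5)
The paper does not actually prove this lemma—it is imported verbatim from \cite[Lem.~15]{EFP4}—so there is no internal proof to compare against. Your argument is correct and is essentially the standard one in this framework: use the shuffle axioms and $\Phi*\Phi^{*-1}=\epsilon$ to reduce to the characterizing equations $\Phi\succ f=\alpha\prec\Phi$ and $\Phi^{*-1}\succ g=\alpha\prec\Phi^{*-1}$, expand the right-hand sides over non-crossing partitions with distinguished block $\pi_1\ni 1$ using $\Phi=\E_\prec(\kappa)$, respectively $\Phi^{*-1}=\E_\prec(-\beta)$ (your auxiliary identity is exactly $\E_\succ(\beta)^{*-1}=\E_\prec(-\beta)$, i.e.\ \cite[Lem.~2]{EFP1}, which this paper itself quotes later), and then split at $k=\max(\pi_1)$ and induct. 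The only ingredient you invoke without proof is that the shuffle adjoint action preserves $\mathfrak{g}$; this is indeed established in \cite{EFP1,EFP4}, and it is precisely what collapses $(\Phi\succ f)(w)$ to the single sum $\sum_{k=1}^{n}f(a_1\cdots a_k)\Phi(a_{k+1}\cdots a_n)$, so citing it is acceptable here.
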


\begin{rem}
\label{rem:shuffleLogs}
Observe that \eqref{eq:mainEFP}, \eqref{eq:FixedEq} and \eqref{eq:FreeToBool} amount to defining half-shuffle logarithms 
\begin{equation}
\label{eq:shuffleLogs}
	\beta = \mathcal{L}_\succ (\Phi) := \Phi^{*-1}\succ (\Phi - \epsilon)
	\qquad 
	\text{and}
	\qquad
	 \kappa = \mathcal{L}_\prec (\Phi) := (\Phi - \epsilon)\prec \Phi^{*-1}.
\end{equation}
\end{rem}


\subsection{Pre-Lie Magnus expansion}

The cumulant-cumulant relations between monotone and free (and Boolean) cumulants are explained in the shuffle algebra framework via a pre-Lie analogue of the classical Magnus expansion \cite{Magnus1954}. More precisely, given a non-commutative probability space $(\A,\varphi)$, we have a non-associative product $\lhd:\mathfrak{g}\otimes \mathfrak{g}\to\mathfrak{g}$ on the Lie algebra of infinitesimal characters on $T(T_+(\A))$ by defining for any $\alpha,\gamma\in \mathfrak{g}$
\begin{equation}
\label{eq:preLieProd}
	\alpha \lhd \gamma := \alpha\prec\gamma - \gamma\succ\alpha.
\end{equation}
Instead of associativity, the product $\lhd$ satisfies the (right) \textit{pre-Lie identity}
\begin{equation}
\label{eq:preLie}
	(\alpha_1 \lhd \alpha_2) \lhd \alpha_3 - \alpha_1 \lhd (\alpha_2 \lhd \alpha_3) 
	= (\alpha_1 \lhd \alpha_3) \lhd \alpha_2 - \alpha_1 \lhd (\alpha_3 \lhd \alpha_2),
\end{equation}
for any $\alpha_1,\alpha_2,\alpha_3\in\mathfrak{g}$. In general, any vector space $L$ together with a product $\lhd$ satisfying \eqref{eq:preLie} is called a \textit{pre-Lie algebra} (see \cite{CP21,Manchon2011} for details and examples of pre-Lie algebras).

In the case of the pre-Lie algebra of infinitesimal characters $\mathfrak{g}$, the pre-Lie product allows us to define the \textit{pre-Lie Magnus operator \cite{AG81,EFM09}} $\Omega:\mathfrak{g}\to\mathfrak{g}$  as follows: for any $\alpha\in\mathfrak{g}$, define
\begin{equation}
\label{eq:Magnus}
	\Omega(\alpha) := \sum_{n\geq0} \frac{B_n}{n!} r_{\lhd\Omega(\alpha)}^{(n)} (\alpha),
\end{equation}
where in general, $r^{(0)}_{\lhd\gamma} (\alpha) = \alpha$,  and $r^{(n+1)}_{\lhd\gamma} (\alpha) = r^{(n)}_{\lhd\gamma} (\alpha) \lhd \gamma$, for $n \geq 0$, and $\{B_n\}_{n\geq0}$ are the Bernoulli numbers. The compositional inverse of the pre-Lie Magnus expansion is the operator $W:\mathfrak{g}\to\mathfrak{g}$ given for any $\alpha\in\mathfrak{g}$ by 
\begin{equation}
\label{eq:Wmap}
	W(\alpha) := \sum_{n\geq0} \frac{1}{(n+1)!} r^{(n)}_{\lhd\alpha}(\alpha).
\end{equation}

\begin{nota}
\label{not:exp}
We will use the following exponential notation for the operators $\Omega$ and $W$. For any $\alpha,\gamma\in\mathfrak{g}$, we set 
$$
	e^{r_{\lhd\gamma}}(\alpha) := \left(\sum_{n\geq0}\frac{1}{n!} r_{\lhd\gamma}^{(n)}\right)(\alpha).
$$ 
We can write for any $\alpha\in\mathfrak{g}$ 
$$
	W(\alpha) 
	= \left(\frac{e^{r_{\lhd\alpha}}-\id}{r_{\lhd\alpha}}\right)(\alpha),\qquad \Omega(\alpha) 
	= \left(\frac{r_{\lhd\Omega(\alpha)}}{e^{r_{\lhd\Omega(\alpha)}}-\id}\right)(\alpha).
$$
\end{nota}
It is shown in \cite{EFM09} that the pre-Lie Magnus expansion effectively describes the relation between the three logarithm maps in a shuffle algebra. More precisely, we have the following result. 

\begin{theo}[\cite{EFP2}]
	For a character $\Phi$ in $T(T_+(\A))$, the unique triple of infinitesimal characters $(\kappa,\beta,\rho)$ given in \cref{thm:mainEFP} satisfies the relations
    \begin{equation}
    	\beta = W(\rho),\quad \kappa = -W(-\rho)
    \end{equation}
and
	\begin{equation}
		\rho = \Omega(\beta) = -\Omega(-\kappa).
	\end{equation}
\end{theo}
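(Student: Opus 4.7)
The plan is to derive the four identities from the existence and uniqueness of the triple $(\kappa,\beta,\rho)$ in \cref{thm:mainEFP}, combined with the half-shuffle logarithm expressions \eqref{eq:shuffleLogs} and a careful analysis of how the half-shuffle products $\prec,\succ$ interact with the convolution exponential $\exp^*$.

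First, I would focus on $\beta = W(\rho)$. By \eqref{eq:shuffleLogs} and \eqref{eq:mainEFP}, substituting $\Phi=\exp^*(\rho)$ gives
\begin{equation*}
	\beta \;=\; \Phi^{*-1}\succ(\Phi-\epsilon) \;=\; \exp^*(-\rho)\succ\bigl(\exp^*(\rho)-\epsilon\bigr).
\end{equation*}
To identify this expression with the series $W(\rho)=\sum_{n\geq 0}\tfrac{1}{(n+1)!}\, r_{\lhd\rho}^{(n)}(\rho)$, I would introduce the one-parameter family $\Phi_t := \exp^*(t\rho)$ and set $\beta_t := \mathcal{L}_\succ(\Phi_t)$; by \cref{thm:mainEFP} applied at level $t$, this $\beta_t$ satisfies the fixed-point equation $\Phi_t = \epsilon + \Phi_t\succ\beta_t$. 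Differentiating in $t$ using $\dot\Phi_t=\rho*\Phi_t=\Phi_t*\rho$, and applying the shuffle axioms \eqref{eq:shuffle} to rewrite $\Phi_t*\rho$ in a form compatible with $\succ$, one obtains a linear recurrence for $\beta_t$ governed by the operator $r_{\lhd t\rho}$. Integrating with initial condition $\beta_0=0$ gives
\begin{equation*}
	\beta_t \;=\; \left(\frac{e^{r_{\lhd t\rho}}-\id}{r_{\lhd t\rho}}\right)(\rho),
\end{equation*}
and setting $t=1$ yields $\beta=W(\rho)$.

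Second, for $\kappa=-W(-\rho)$, I would run the analogous argument starting from the left shuffle logarithm $\kappa=(\Phi-\epsilon)\prec\Phi^{*-1}$. By the left-right symmetry of the shuffle axioms \eqref{eq:shuffle}, the same one-parameter-family computation with the roles of $\prec$ and $\succ$ interchanged produces a pre-Lie series expression for $\kappa$; a short bookkeeping of signs (tracking how the sign flip enters through the definition \eqref{eq:preLieProd} of $\lhd$ when rewritten for the $\prec$-side) identifies this series with $-W(-\rho)$. Once $\beta=W(\rho)$ and $\kappa=-W(-\rho)$ are established, the relations $\rho=\Omega(\beta)=-\Omega(-\kappa)$ follow immediately from the fact that $\Omega$ is the compositional inverse of $W$ on $\mathfrak{g}$, a purely pre-Lie-algebraic statement independent of the shuffle structure.

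The main obstacle is the calculation showing that the half-shuffle expression $\exp^*(-\rho)\succ(\exp^*(\rho)-\epsilon)$ coincides with the explicit pre-Lie series defining $W(\rho)$. The key algebraic input is the shuffle identity $(\alpha*\beta)\succ\gamma=\alpha\succ(\beta\succ\gamma)$ together with $\alpha*\gamma=\alpha\succ\gamma+\alpha\prec\gamma$, which jointly allow iterated $\succ$-actions on $\rho$ to be re-expressed through the pre-Lie product $\lhd$ of \eqref{eq:preLieProd}; carrying this identification out order-by-order in $\rho$ (equivalently, solving the differential equation above in closed form) is the technical heart of the proof.
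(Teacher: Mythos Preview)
The paper does not give its own proof of this theorem: it is quoted from \cite{EFP2} (with the underlying mechanism attributed to \cite{EFM09}), and no argument is supplied here. So there is nothing in the paper to compare your proposal against directly. That said, your strategy is essentially the standard one from those references, and it is sound. A few remarks on the execution.

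First, the displayed formula for $\beta_t$ is off by a factor of $t$: since $r_{\lhd t\rho}=t\,r_{\lhd\rho}$, one has
\[
\beta_t \;=\; W(t\rho)\;=\;\sum_{n\ge 0}\frac{t^{\,n+1}}{(n+1)!}\,r_{\lhd\rho}^{(n)}(\rho)
\;=\; t\cdot\frac{e^{r_{\lhd t\rho}}-\id}{r_{\lhd t\rho}}(\rho),
\]
not the expression you wrote. This is harmless at $t=1$, but the discrepancy signals that the intermediate step was not actually carried out. Second, the phrase ``a linear recurrence for $\beta_t$ governed by $r_{\lhd t\rho}$'' is too vague: what one really obtains by differentiating the fixed-point equation $\Phi_t=\epsilon+\Phi_t\succ\beta_t$ and using $\dot\Phi_t=\rho*\Phi_t$ together with the shuffle identities \eqref{eq:shuffle} is the explicit ODE
\[
\Phi_t\succ\dot\beta_t \;=\; \rho\prec\Phi_t,
\qquad\text{hence}\qquad
\dot\beta_t \;=\; \Phi_t^{*-1}\succ\rho\prec\Phi_t \;=\; e^{r_{\lhd t\rho}}(\rho),
\]
the last equality being exactly the operator identity proved later in the paper (proof of \cref{prop:orthogonalShuffle}) from the commutation of $r_{\prec\rho}$ and $\ell_{\rho\succ}$. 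Integrating this from $0$ to $1$ gives $\beta=W(\rho)$ cleanly. You should state and verify this ODE rather than allude to it. The $\kappa$ case then follows by the same computation applied to $\Phi^{*-1}=\exp^*(-\rho)=\E_\prec(-\beta')$ with $\beta'=\mathcal{L}_\succ(\Phi^{*-1})$, or, as you say, by the $\prec/\succ$ symmetry; the sign bookkeeping is genuinely short but should be written out, since ``identifies this series with $-W(-\rho)$'' is not yet an argument.
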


In particular, the above theorem establishes that if $(\A,\varphi)$ is a non-commutative probability space and $\Phi$ is the character on $T(T_+(\A))$ extending $\varphi$, then the monotone cumulants of $(\A,\varphi)$ can be written in terms of the free and Boolean cumulants via the pre-Lie Magnus expansion. This approach has been recently applied in \cite{CEFPP} to provide combinatorial formulas, in terms of non-crossing partitions, that express monotone cumulants in terms of the free and Boolean ones:
\begin{align}
	h_n(a_1,\ldots,a_n) 
	&= \sum_{\pi\in\NCirr(n)}\omega(\pi)b_\pi(a_1,\ldots,a_n),\label{eq:auxMB1}\\ 
	h_n(a_1,\ldots,a_n) 
	&= \sum_{\pi\in\NCirr(n)}(-1)^{|\pi|-1}\omega(\pi)k_\pi(a_1,\ldots,a_n).\label{eq:auxMB2}
\end{align}
Here, for any $\pi\in\NCirr(n)$, the coefficient $\omega(\pi) := \sum_{k=1}^{|\pi|}\frac{(-1)^{k-1}}{k}\omega_k(\pi)$, with $\omega_k(\pi)$ being the number of surjective strictly order-preserving functions $f:\pi\to [k]$; we are considering $\pi$ as a poset according to its nesting structure. With the same approach, we can also recover the converse formulas from \cite{AHLV}:

\begin{align}
\label{eq:auxBM}
	b_n(a_1,\ldots,a_n) 
	&= \sum_{\pi\in\NCirr(n)}\frac{1}{t(\pi)!} h_\pi(a_1,\ldots,a_n),\\ 
	k_n(a_1,\ldots,a_n) 
	&= \sum_{\pi\in\NCirr(n)}\frac{(-1)^{|\pi|-1}}{t(\pi)!} h_\pi(a_1,\ldots,a_n).\label{eq:auxMB22}
\end{align}

The link between pre-Lie algebras and non-crossing partitions is explained by the following lemma that establishes a combinatorial formula for the right iteration of the pre-Lie product \eqref{eq:preLieProd}.

\begin{lemma}[\cite{CEFPP}]
\label{lem:iterated}
Let $\alpha_1,\ldots, \alpha_{n+1} \in \mathfrak{g} $ and $w = a_1\cdots a_m \in T_+(\A)$ be a word of length $|w| =m \geq n + 2$, $n > 0$. Then
\begin{equation}
\label{eq:iterated}
	((\cdots(\alpha_1\lhd \alpha_2)\lhd \cdots )\lhd \alpha_n)(w) 
	= \sum_{\substack{\pi\in \mathrm{M}_{n}^{\scriptscriptstyle\mathrm{irr}}(m)\\ \pi = (\pi_1,\ldots,\pi_n)}} 
	\prod_{j=1}^n \alpha_j(w_{\pi_j}),
\end{equation}
where $\mathrm{M}^{\scriptscriptstyle\mathrm{irr}}_n(m)$ stands for the set of irreducible monotone partitions on $[m]$ with exactly $n$ blocks.
\end{lemma}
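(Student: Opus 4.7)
The plan is induction on $n\ge 1$. The base case $n=1$ is immediate: $\mathrm{M}^{\scriptscriptstyle\mathrm{irr}}_1(m)$ contains only the single-block partition $\{[m]\}$, so both sides reduce to $\alpha_1(w)$.

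For the inductive step, set $F:=(\cdots(\alpha_1\lhd\alpha_2)\lhd\cdots)\lhd\alpha_{n-1}$; since $(\mathfrak{g},\lhd)$ is a pre-Lie algebra (closure of $\mathfrak{g}$ under $\lhd$ was part of the shuffle-algebraic setup recalled above), we have $F\in\mathfrak{g}$. I then expand
\[
(F\lhd\alpha_n)(w)=(F\prec\alpha_n)(w)-(\alpha_n\succ F)(w)
\]
using the coproduct \eqref{eq:coproduct} and its splitting $\Delta=\Delta_\prec+\Delta_\succ$. In $F\prec\alpha_n$ the sum ranges over $S\ni 1$; since $\alpha_n\in\mathfrak{g}$ vanishes on every tensor $w_{J_1}|\cdots|w_{J_r}$ with $r\ne 1$, only subsets $S$ for which $I:=[m]\setminus S$ is a single non-empty interval (necessarily contained in $\{2,\ldots,m\}$) contribute. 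In $\alpha_n\succ F$ the sum ranges over non-empty $S\not\ni 1$; the same vanishing argument applied to $F$ forces $[m]\setminus S$ to be a single interval containing $1$, so $S=\{k+1,\ldots,m\}$ with $k\in[m-1]$.

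Next, observe that the ``suffix'' contributions $I=\{a,\ldots,m\}$ (for which $S=\{1,\ldots,a-1\}$) in $F\prec\alpha_n$ coincide, via $a=k+1$, with the contributions to $\alpha_n\succ F$ and therefore cancel in $F\lhd\alpha_n$. What remains is
\[
(F\lhd\alpha_n)(w)=\sum_{\substack{I=\{a,\ldots,b\}\\ 2\le a\le b\le m-1}} F\bigl(w_{[m]\setminus I}\bigr)\,\alpha_n(w_I),
\]
and substituting the inductive formula for $F(w_{[m]\setminus I})$ yields a double sum indexed by such $I$ and by $\tau\in\mathrm{M}^{\scriptscriptstyle\mathrm{irr}}_{n-1}([m]\setminus I)$.

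To finish I exhibit a bijection between these pairs $(I,\tau)$ and elements $\pi\in\mathrm{M}^{\scriptscriptstyle\mathrm{irr}}_n(m)$, sending $(I,\tau)\mapsto(\tau_1,\ldots,\tau_{n-1},I)$. The key combinatorial facts are: (i) a block maximal in the nesting order of a non-crossing partition is necessarily an interval---hence the last block $\pi_n$ of any $\pi\in\mathrm{M}^{\scriptscriptstyle\mathrm{irr}}_n(m)$ is an interval which, being distinct from the outer block $\pi_1\ni 1,m$, lies in $\{2,\ldots,m-1\}$; (ii) removing $\pi_n$ preserves non-crossing, irreducibility (since $1$ and $m$ stay in $\pi_1$), and the restriction of the monotone labelling; (iii) conversely, adjoining an interval $I\subseteq\{2,\ldots,m-1\}$ to a non-crossing partition on $[m]\setminus I$ creates no crossing, and such an $I$ is automatically maximal in the nesting order, so assigning it the label $n$ extends the monotone labelling consistently. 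I expect this final bijection---in particular the compatibility of the labellings---to be the main technical point, though it amounts to a short combinatorial verification once the coproduct computations are in place.
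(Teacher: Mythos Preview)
The paper does not supply a proof of this lemma; it is quoted from \cite{CEFPP} and used as a black box, so there is no in-paper argument to compare against.

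Your argument is correct and is essentially the standard one. The coproduct computation is right: in $(F\prec\alpha_n)(w)$ only subsets $S\ni 1$ with $[m]\setminus S$ a single nonempty interval survive, while in $(\alpha_n\succ F)(w)$ only $S=\{k+1,\dots,m\}$ survives; the suffix intervals $I=\{a,\dots,m\}$ then cancel exactly, leaving the sum over intervals $I\subset\{2,\dots,m-1\}$. Your bijection is also sound. The point you flag as the ``main technical point'' is indeed routine: the nesting relation between two blocks $\tau_i,\tau_j\subseteq[m]\setminus I$ is defined purely via inequalities among their elements in $[m]$, so it is unchanged whether one views them inside $[m]$ or inside $[m]\setminus I$; hence the monotone labelling on $\tau$ transfers verbatim. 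Conversely, maximality of $\pi_n$ forces it to be an interval (any gap would produce a block strictly above it by non-crossingness), and $\pi_n\neq\pi_1$ forces $\pi_n\subset\{2,\dots,m-1\}$ since $1,m\in\pi_1$.

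One small remark: the hypothesis $m\ge n+2$ in the statement is not actually needed for your induction to go through (both sides vanish when the word is too short), so you need not worry about verifying length bounds when invoking the inductive hypothesis on $w_{[m]\setminus I}$.
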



\subsection{Natural products as characters}

Muraki showed in \cite{Mur} that there exist five natural products on non-commutative probability spaces. In \cite{EFP3}, the authors prove that the shuffle-algebraic framework effectively describes the free, Boolean and monotone products. 
\par In order to describe natural products in the shuffle picture, we consider the free product $\A := \A_1 \star \A_2$ of two non-commutative probability spaces, ($\A_1,\varphi_1)$ and $(\A_2,\varphi_2)$; identifying or not units, depending on whether we regard free, Boolean or monotone independence. Then, we consider $T(T_+(\A_1))$ and $T(T_+(\A_2))$, as well as $\Phi_1$ and $\Phi_2$, the characters extending $\varphi_1$ and $\varphi_2$, respectively. Moreover, we write $\Phi_1 = \E_\prec(\kappa_1) = \E_\succ(\beta_1)$ and $\Phi_2 = \E_\prec(\kappa_2) = \E_\succ(\beta_2)$.
\par The next step consists in considering the double tensor Hopf algebra $T(T_+(\A))$. Observe that we can identify $T(T(\A_1))$ and $T(T_+(\A_2))$ as  sub-Hopf algebras of $T(T_+(\A))$. Also notice that the linear forms $\Phi_1$, $\kappa_1$ and $\beta_1$ on $T(T_+(\A_1))$ can be extended to linear forms on $T(T_+(\A))$ by declaring $\Phi_1(w)=\kappa_1(w) = \beta_1(w)=0$ whenever $w$ contains a letter not in $\A_1$. Analogously, we can extend the linear forms $\Phi_2$, $\kappa_2$ and $\beta_2$ to linear forms on $T(T_+(\A))$.
\par Since the set of infinitesimal characters on $T(T_+(\A))$ forms a Lie algebra, the sum of infinitesimal characters is also an infinitesimal character. In particular, we may consider the infinitesimal characters $\kappa_1+\kappa_2$ and $\beta_1+\beta_2$ as well as the characters
\begin{equation}
\label{eq:ShuffleConv}
	\Phi_1\boxplus \Phi_2 
	:= \E_\prec(\kappa_1+\kappa_2),\qquad \Phi_1\uplus\Phi_2 
	:= \E_\succ(\beta_1+\beta_2).
\end{equation}
In particular, notice that $(\kappa_1+\kappa_2)(w) = 0$ whenever $w$ contains letters from both $\A_1$ and $\A_2$, which resembles the well-known result that free independence is characterized by the vanishing condition of mixed free cumulants \cite{NSp}. The following result establishes that the above characters are indeed extensions of the free and Boolean products of $\varphi_1$ and $\varphi_2$, and also provides the analog result for the monotone case.

\begin{theo}[\cite{EFP3}]
\label{thm:ShuffleConv}
Let $(\A_1,\varphi_1)$ and $(\A_2,\varphi_2)$ be two non-commutative probability spaces and consider the free product algebra $\A = \A_1\star \A_2$ (units are identified in the free case). If $\Phi_1$ and $\Phi_2$ are the corresponding characters on $T(T_+(\A))$ extending $\varphi_1$ respectively $\varphi_2$, then we have that
\begin{enumerate}[label=\roman*)]
	\item $\Phi_1\boxplus\Phi_2$ is the character extending the free product of $\varphi_1$ and $\varphi_2$.
	\item $\Phi_1\uplus\Phi_2$ is the character extending the Boolean product of $\varphi_1$ and $\varphi_2$.
	\item $\Phi_1*\Phi_2$ is the character extending the monotone product of $\varphi_1$ and $\varphi_2$.
\end{enumerate}
\end{theo}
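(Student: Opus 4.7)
The plan is to exploit a common vanishing principle: each $\Phi_i$ (by hypothesis) and the associated cumulant infinitesimal characters $\kappa_i, \beta_i$ vanish on every word in $T_+(\A)$ containing a letter outside $\A_i$. For $\kappa_i$ and $\beta_i$ this is not given a priori, so I would first establish it by induction on word length from the fixed point equations \eqref{eq:FixedEq}: if $w$ is mixed, then every proper subword appearing in the expansion $\Phi_i = \epsilon + \kappa_i \prec \Phi_i$ is either itself mixed (killed by the inductive hypothesis) or has a complementary block containing a letter outside $\A_i$ (killed by $\Phi_i$ on that block, using that $\Phi_i$ is multiplicative on the bar product). An identical argument via the right fixed point equation handles $\beta_i$.

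For part (i), I would next invoke \cref{thm:mainEFP,thm:linkNCP} to write
\[
(\Phi_1 \boxplus \Phi_2)(w) = \E_\prec(\kappa_1+\kappa_2)(w) = \sum_{\pi \in \NC(n)} \prod_{\pi_j \in \pi} (\kappa_1+\kappa_2)(w_{\pi_j}),
\]
and use the vanishing principle to restrict the sum to non-crossing partitions whose blocks are all color-monochromatic, each such block contributing a free cumulant of $\varphi_1$ or $\varphi_2$ according to its color. This reproduces Speicher's combinatorial characterization of the free product $\varphi_1\star\varphi_2$ by vanishing of mixed free cumulants, so by uniqueness of the character extending a given functional, $\Phi_1\boxplus\Phi_2$ must coincide with the extension of $\varphi_1\star\varphi_2$. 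For part (ii) I would repeat the argument verbatim with $\NC(n)$ replaced by $\operatorname{I}(n)$ and $\kappa_i$ by $\beta_i$, invoking the analogous Boolean characterization.

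For part (iii), I would expand the convolution directly using the coproduct \eqref{eq:coproduct} to get
\[
(\Phi_1*\Phi_2)(w) = \sum_{S \subseteq [n]} \Phi_1(w_S)\prod_{k=1}^{r}\Phi_2(w_{J_k}),
\]
where $J_1,\ldots,J_r$ are the connected components of $[n]\setminus S$. The vanishing principle then forces $S$ to equal exactly the set of $\A_1$-positions of $w$, with the $J_k$ becoming the maximal runs of consecutive $\A_2$-positions. The unique surviving term factors as
\[
\varphi_1\!\left(\prod_{i\in S} a_i\right)\prod_{k=1}^{r}\varphi_2\!\left(\prod_{j\in J_k}a_j\right),
\]
and the remaining task is to identify this with the monotone product of $\varphi_1$ and $\varphi_2$ (with $\A_2$ above $\A_1$). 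I would carry this out by induction on the number of maximal $\A_2$-runs in $w$: collapsing each run into a single element of $\A_2$ and applying the monotone independence axiom (including its boundary cases for leading or trailing $\A_2$-runs) extracts one $\varphi_2$-factor per run and terminates with a single $\varphi_1$-moment over the concatenation of the $\A_1$-letters.

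The hard part will be part (iii): although the shuffle-algebraic side is a single transparent sum, the monotone product is defined through a single-letter extraction rule that does not, a priori, operate on an entire same-color block. The delicate step will be to justify, via induction on the number of monochromatic runs, that iterated use of the axiom (with its boundary cases) genuinely collapses each maximal $\A_2$-block into one $\varphi_2$-factor, in a way independent of the order of extraction. Parts (i) and (ii) should be essentially mechanical once the vanishing principle is in hand, amounting to a translation of the free and Boolean cumulant characterizations of independence into the shuffle dictionary.
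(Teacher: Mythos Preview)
The paper does not prove this theorem; it is quoted from \cite{EFP3} and stated without proof here, so there is no in-paper argument to compare against. Your proposal is a sound reconstruction of the standard argument and would go through.

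A couple of small remarks. For parts (i) and (ii), your reduction to the vanishing of mixed cumulants is exactly the mechanism used in \cite{EFP3}; the induction you sketch to show that $\kappa_i,\beta_i$ vanish on mixed words is correct and is the only point that needs a line of justification. For part (iii), your identification of the unique surviving summand $S=\{\text{$\A_1$-positions}\}$ is right, and the step you flag as ``hard'' is in fact almost immediate once you remember that the monotone product is computed on the level of the algebra $\A=\A_1\star\A_2$: the element $a_1\cdots a_n\in\A$ is \emph{equal} to the alternating word obtained by multiplying together each maximal monochromatic run, so no separate induction on runs is needed to collapse $\A_2$-blocks. After that collapse, a single pass of the monotone axiom (with its boundary cases) on the alternating word yields your factorized expression directly. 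So part (iii) is not more delicate than parts (i) and (ii); if anything it is easier, since no cumulant expansion is required.
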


\begin{rem}
\label{rmk:consideration}
Let $(\A,\varphi)$ be a non-commutative probability space and consider two random variables $a$ and $b$. Denoting $\mu_a$ and $\mu_b$ the distributions of $a$ respectively $b$, we can define the characters $\Phi_a$ and $\Phi_b$ on $T(T_+(\A))$ extending $\mu_a$ respectively $\mu_b$, by $\Phi_a(w) = \mu_a(X^n)$ if $w=a^{\otimes n}\in \A^{\otimes n}$ and $n\geq0$, and $\Phi(w) =0$ otherwise, and analogously for $\Phi_b$. From \cref{thm:ShuffleConv}, the linear form $\Phi_a\boxplus \Phi_b$ is the character on $T(T_+(\A))$ extending the free product of $\mu_a$ and $\mu_b$. In particular, if restricting $\Phi_a\boxplus\Phi_b$ to the sub-Hopf algebra $T(T_+(\A_{a+b}))$ where $\A_{a+b}$ is the unital algebra generated by $a+b$, then we have a character that extends the free additive convolution $\mu_a\boxplus\mu_b$. Thus, we will also refer to \textit{$\Phi_a\boxplus\Phi_b$ as the character extending $\mu_a\boxplus\mu_b$}. Analogous considerations follow for the cases of Boolean and monotone additive convolutions.
\end{rem}


\subsection{Conditionally free cumulants as infinitesimal characters}

The shuffle algebraic point of view for non-commutative probability includes the conditionally free analogue of cumulants and convolutions (\cite{EFP4}). More explicitly, let $(\A,\varphi,\psi)$ be a conditionally non-commutative probability space and consider $T(T_+(\A))$ as well as the characters $\Phi$ and $\Psi$ extending $\varphi$ respectively $\psi$. If in addition, we consider $\beta$ to be the infinitesimal character associated to the Boolean cumulants of $\varphi$, i.e.~$\Phi = \E_\succ(\beta)$, we can define the infinitesimal character $\mathsf{K} \in \mathfrak{g}$ given by
\begin{equation}
\label{eq:CFreeChar}
	\mathsf{K} = \Psi\succ \beta\prec \Psi^{*-1}.
\end{equation}

\begin{rem}
\label{rmk:transporting}
As mentioned in the \nameref{sec:intro}, the definition of $\mathsf{K}$ can be described as a way of transporting the Boolean cumulant form $\beta$ of $\Phi$ through the shuffle adjoint action of $\Psi^{*-1}$ in the space of infinitesimal characters, i.e.~
\begin{equation}
\label{eq:transporting}
	\beta = \theta_{\Psi}(\mathsf{K}).
\end{equation}
The action extends the shuffle relation $\beta'= \theta_\Psi(\kappa')$ between the Boolean and free cumulants of $\Psi$, by considering $\beta$ instead of the Boolean cumulant form of $\Psi$. Evaluating \eqref{eq:transporting} on a word $w=a_1\cdots a_n\in\A^{\otimes n}$, using \cref{lem:auxL15} and the Boolean moment-cumulant formula, we can prove that the infinitesimal character $\mathsf{K}$ is actually associated to the c-free cumulants of $(\A,\varphi,\psi)$.
\end{rem}

\begin{theo}[\cite{EFP4}]
	Let $(\A,\varphi,\psi)$ be a conditionally non-commutative probability space, and consider the infinitesimal character $\mathsf{K}$ on $T(T_+(\A))$ defined in \cref{eq:CFreeChar}. Then for any word $w=a_1\cdots a_n\in T_+(\A)$, we have
$$
		\mathsf{K}(w) = k_{n}^{(c)}(a_1,\ldots,a_n).
$$
In other words, $\mathsf{K}$ is the infinitesimal character  on $T(T_+(\A))$ associated to the conditionally free cumulants $\{k_n^{(c)}\}_{n\geq1}$ of $(\A,\varphi,\psi)$.
\end{theo}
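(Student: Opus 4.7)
The plan is to verify that the infinitesimal character $\mathsf{K}$ satisfies the c-free moment--cumulant recursion of \cref{def:cfreecumulants} and then invoke the uniqueness of that recursion (the one-block partition $\{[n]\}$ contributes $\mathsf{K}(w)$ on the right-hand side with coefficient~$1$, while every other term involves $k^{(c)}_m$ with $m<n$ or free cumulants of $\psi$, so the identity determines $k^{(c)}_n$).

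First I would invert the shuffle adjoint action. The shuffle axioms \eqref{eq:shuffle} combined with $\Psi*\Psi^{*-1}=\epsilon$ give $\theta_{\Psi^{*-1}}\circ\theta_{\Psi}=\mathrm{id}_{\mathfrak{g}}$, so the definition $\mathsf{K}=\Psi\succ\beta\prec\Psi^{*-1}$ is equivalent to $\beta=\Psi^{*-1}\succ\mathsf{K}\prec\Psi$. Writing $\Psi=\E_\prec(\kappa_\Psi)=\E_\succ(\beta_\Psi)$ as in \cref{thm:mainEFP} and applying the first identity of \cref{lem:auxL15} with $\alpha=\mathsf{K}$, evaluation at $w=a_1\cdots a_n$ yields
\[
    \beta(w) \;=\; \sum_{\substack{\sigma\in\NCirr(n)\\ 1\in\sigma_1}} \mathsf{K}(w_{\sigma_1}) \prod_{\substack{\sigma_j\in\sigma\\ j\neq 1}} \kappa_\Psi(w_{\sigma_j}),
\]
where by \cref{thm:linkNCP} $\kappa_\Psi$ is precisely the infinitesimal character corresponding to the free cumulants $\{k'_n\}_{n\ge 1}$ of $\psi$ appearing in \cref{def:cfreecumulants}.

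Next I would plug this into the Boolean moment--cumulant formula \eqref{eq:booleanMCrel},
\[
    \varphi(a_1\cdots a_n)\;=\;\sum_{\tau\in\operatorname{I}(n)}\prod_{\tau_i\in\tau}\beta(w_{\tau_i}),
\]
and regroup using the bijection between $\NC(n)$ and pairs $(\tau,(\sigma_i)_i)$ with $\tau\in\operatorname{I}(n)$ and $\sigma_i\in\NCirr(\tau_i)$. Given $\pi\in\NC(n)$, each $\tau_i$ is the convex hull of an outer block $B_i$ of $\pi$ (using the elementary fact that between two consecutive outer blocks of a non-crossing partition there are no elements of any other block), and $\sigma_i$ is the restriction of $\pi$ to $\tau_i$; the distinguished block $\sigma_{i,1}$ of \cref{lem:auxL15} is the unique outer block of $\sigma_i$, which coincides with $B_i$, while the remaining blocks of the $\sigma_i$ are precisely the inner blocks of $\pi$. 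After this regrouping the nested sum becomes
\[
    \sum_{\pi\in\NC(n)} \Bigg(\prod_{\substack{\pi_i\in\pi\\ \pi_i\,\mathrm{outer}}} \mathsf{K}(w_{\pi_i})\Bigg)\Bigg(\prod_{\substack{\pi_j\in\pi\\ \pi_j\,\mathrm{inner}}} \kappa_\Psi(w_{\pi_j})\Bigg),
\]
which is exactly the c-free moment--cumulant relation of \cref{def:cfreecumulants}, whence $\mathsf{K}(w)=k^{(c)}_n(a_1,\ldots,a_n)$.

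The only delicate point is the combinatorial bookkeeping in the final step, namely the bijection $\NC(n)\leftrightarrow\{(\tau,(\sigma_i)_i)\}$ together with the identification of the "root" blocks $\sigma_{i,1}$ with the outer blocks of $\pi$. Everything else is a direct application of \cref{lem:auxL15,thm:mainEFP,thm:linkNCP} and the shuffle-algebra axioms \eqref{eq:shuffle}.
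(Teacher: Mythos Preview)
Your proposal is correct and follows exactly the route the paper sketches in \cref{rmk:transporting}: invert the definition to $\beta=\theta_\Psi(\mathsf{K})$, apply \cref{lem:auxL15} to express $\beta(w)$ as a sum over $\NCirr(n)$ with $\mathsf{K}$ on the outer block and the free cumulants $\kappa'$ of $\Psi$ on the inner blocks, and then insert this into the Boolean moment--cumulant relation \eqref{eq:booleanMCrel}. The combinatorial bookkeeping you flag---the bijection $\NC(n)\leftrightarrow\{(\tau,(\sigma_i)_i):\tau\in\mathrm{I}(n),\ \sigma_i\in\NCirr(\tau_i)\}$---is precisely the decomposition into irreducible components described in \cref{ssec:prelim}, so this step is standard. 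The paper itself does not give a full proof here (the theorem is quoted from \cite{EFP4}), but your argument matches the indicated strategy.
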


Conditionally free additive convolution is described as a pair of characters in the following way. Given two pairs of characters, $(\Phi_1,\Psi_1)$ and $(\Phi_2,\Psi_2)$, defined on $T(T_+(\A))$, we write $\Psi_1 = \E_\prec(\kappa_1')$ and $\Psi_2 = \E_\prec(\kappa_2')$. Also, we denote by $\mathsf{K}_1$ and $\mathsf{K}_2$ the infinitesimal characters given by \eqref{eq:CFreeChar} associated to $(\Phi_1,\Psi_1)$ and $(\Phi_2,\Psi_2)$, respectively. Thus, we define the c-free convolution in the shuffle picture by
\begin{equation}
\label{eq:cfreeconv}
	(\Phi_1,\Psi_1) \boxplus (\Phi_2,\Psi_2) := (\Phi,\Psi),
\end{equation}
where 
\begin{equation}
\label{eq:psiconv}
	\Psi = \Psi_1\boxplus \Psi_2 = \E_\prec(\kappa_1'+ \kappa_2'),
\end{equation}
and $\Phi$ is the character given by
\begin{equation}
\label{eq:phiconv}
	\Phi = \E_\succ \big(\Psi^{*-1}\succ (\mathsf{K}_1+\mathsf{K}_2)\prec \Psi \big).
\end{equation}
In the language of non-commutative probability, Boolean cumulants of the distribution $\Phi$ are given by the infinitesimal character $\Psi^{*-1}\succ (\mathsf{K}_1+\mathsf{K}_2)\prec \Psi$, i.e.~the c-free additive convolution has Boolean cumulants that are obtained by transporting the sum of c-free cumulants by the free additive convolution $\Psi_1\boxplus \Psi_2$ using the shuffle adjoint action: $\mathcal{L}_\succ(\Phi) = \beta = \theta_{\Psi_1\boxplus\Psi_2}(\mathsf{K}_1+\mathsf{K}_2)$.

Before stating the next theorem, recall that the distribution of a random variable $a\in \A$ in a conditionally non-commutative probability space $(\A,\varphi,\psi)$ consists of a pair of linear functionals $(\mu,\nu)$. The distribution can be extended to a pair of characters $(\Phi,\Psi)$ on $T(T_+(\A))$ given by $\Phi(w) = \mu(X^n)$ and $\Psi(w) = \nu(X^n)$ if $w = a^{\otimes n}\in \A^{\otimes n}$, and $\Phi(w) = 0 = \Psi(w)$ if $w$ is a word containing a letter different from $a$. The next result states how the shuffle-algebraic c-free additive convolution actually extends the c-free additive convolution of two pairs of distributions, similarly as explained in \cref{rmk:consideration}.

\begin{theo}[\cite{EFP4}]
Let $(\A,\varphi,\psi)$ be a conditionally non-commutative probability space and consider two pairs of distributions $(\mu_1,\nu_1)$ and $(\mu_2,\nu_2)$, with their respective extensions $(\Phi_1,\Psi_1)$ and $(\Phi_2,\Psi_2)$ to two pairs of characters on $T(T_+(\A))$. Then $(\Phi_1,\Psi_1) \boxplus (\Phi_2,\Psi_2)$ is the pair of characters extending the c-free additive convolution $(\mu_1,\nu_1)\boxplus(\mu_2,\nu_2)$.
\end{theo}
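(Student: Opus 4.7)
My strategy is to reduce the statement to two independent matching conditions that uniquely characterise the pair of characters extending a pair of distributions: (a) $\Psi$ is the character extending $\nu_1 \boxplus \nu_2$, and (b) the c-free cumulant infinitesimal character of the pair $(\Phi,\Psi)$ equals $\mathsf{K}_1 + \mathsf{K}_2$. By \cref{def:cfreeaddconv} together with the preceding theorem identifying c-free cumulants with the infinitesimal character $\mathsf{K} = \Psi\succ\beta\prec\Psi^{*-1}$, a pair of characters on $T(T_+(\A))$ extending a given c-free additive convolution is uniquely determined by exactly these two data, so it suffices to verify (a) and (b) and then invoke \cref{rmk:consideration}.

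Condition (a) is immediate from the construction: by \eqref{eq:psiconv} one has $\Psi = \Psi_1 \boxplus \Psi_2 = \E_\prec(\kappa_1' + \kappa_2')$, and part (i) of \cref{thm:ShuffleConv} combined with \cref{rmk:consideration} identifies this with the character extending the free additive convolution $\nu_1\boxplus\nu_2$.

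For (b), I exploit the shuffle axioms \eqref{eq:shuffle} to collapse a double shuffle adjoint action. By \eqref{eq:phiconv}, $\Phi = \E_\succ(\beta)$ with
\[
\beta \;=\; \Psi^{*-1} \succ (\mathsf{K}_1+\mathsf{K}_2) \prec \Psi,
\]
so the c-free cumulant form of the pair $(\Phi,\Psi)$ is
\[
\mathsf{K} \;=\; \Psi \succ \beta \prec \Psi^{*-1} \;=\; \Psi \succ \bigl(\Psi^{*-1}\succ(\mathsf{K}_1+\mathsf{K}_2)\prec \Psi\bigr) \prec \Psi^{*-1}.
\]
The middle shuffle axiom $(f\succ g)\prec h = f\succ(g\prec h)$ makes all such expressions associate unambiguously. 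Applying the left axiom $(f\prec g)\prec h = f\prec (g*h)$ on the right end gives $(\mathsf{K}_1+\mathsf{K}_2)\prec \Psi \prec \Psi^{*-1} = (\mathsf{K}_1+\mathsf{K}_2)\prec \epsilon = \mathsf{K}_1+\mathsf{K}_2$, and then the right axiom $f\succ(g\succ h) = (f*g)\succ h$ yields $\Psi\succ\Psi^{*-1}\succ(\mathsf{K}_1+\mathsf{K}_2) = \epsilon\succ(\mathsf{K}_1+\mathsf{K}_2) = \mathsf{K}_1+\mathsf{K}_2$. Hence $\mathsf{K} = \mathsf{K}_1+\mathsf{K}_2$, which is precisely the defining property of the c-free cumulants of the c-free additive convolution.

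There is no serious obstacle: the content is the observation that the shuffle adjoint action $\theta_{\Psi}$ and its counterpart $\theta_{\Psi^{*-1}}$ are mutually inverse bijections of the space of infinitesimal characters, a fact that is encoded cleanly by the three shuffle identities. The only points to be a little careful about are keeping track of the reduced case $f,g,h \ne \epsilon$ in the half-shuffles (which holds here because $\mathsf{K}_1+\mathsf{K}_2 \in \mathfrak{g}$ and $\Psi,\Psi^{*-1}$ are characters) and invoking \cref{rmk:consideration} to pass from characters on $T(T_+(\A))$ back to distributions on $\mathbb{C}[X]$ restricted to the subalgebra generated by the sum $a+b$. Assembling (a) and (b) then identifies $(\Phi,\Psi)$ as the pair of characters extending $(\mu_1,\nu_1)\boxplus(\mu_2,\nu_2)$.
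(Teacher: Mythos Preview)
The paper does not give a proof of this statement at all: it is quoted as a result from \cite{EFP4} and left unproved here. So there is no argument in the paper to compare yours against.

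That said, your proof is correct and is precisely the argument one would expect. The shuffle-algebraic definition \eqref{eq:cfreeconv}--\eqref{eq:phiconv} is rigged exactly so that (a) the second component is the free convolution of the $\Psi_i$'s and (b) the c-free cumulant form of the output pair is the sum $\mathsf{K}_1+\mathsf{K}_2$; you verify (b) by the standard computation that $\theta_{\Psi^{*-1}}\circ\theta_{\Psi}=\id$ via the shuffle axioms \eqref{eq:shuffle}. Since \cref{def:cfreeaddconv} pins down $(\mu,\nu)$ by exactly the same two conditions at the level of distributions, and the passage between characters and distributions is handled by \cref{rmk:consideration} and \cref{thm:ShuffleConv}, the identification follows. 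Your care about the $\epsilon$-conventions in the half-shuffles is warranted but, as you note, harmless here because all the relevant factors are either characters or infinitesimal characters.
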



\section{Conditionally monotone cumulants as an infinitesimal characters}
\label{sec:cmoninfchar}

The goal of this section is to present one of the principal contributions of the paper: describing c-monotone cumulants as an infinitesimal character on the double tensor Hopf algebra. Throughout this section, $(\A,\varphi,\psi)$ is a conditionally non-commutative probability space, $T(T_+(\A))$ is the double tensor Hopf algebra on $\A$, and $\Phi$ and $\Psi$ are the characters on $T(T_+(\A))$ which extend $\varphi$ and $\psi$, respectively. We also consider the triples of infinitesimal characters $(\kappa,\beta,\rho)$ and $(\kappa',\beta',\rho')$ given by \cref{thm:mainEFP} for $\Phi$ and $\Psi$, respectively.
\par The definition of c-free cumulants as an infinitesimal character described in \eqref{eq:CFreeChar} can be considered as a generalization of the half-shuffle adjoint action equations \eqref{eq:FreeToBool} that relate the free and Boolean cumulants. This observation prompt us the following definition.

\begin{defi}[c-monotone cumulant infinitesimal character]
\label{def:cmoninfchar}
For the pair of characters $(\Phi,\Psi)$, we define the \textit{conditionally monotone cumulant infinitesimal character of $(\Phi,\Psi)$} to be the infinitesimal character $\mathsf{P} = \mathsf{P}(\Phi,\Psi) \in \mathfrak{g}$ defined by the equation
\begin{equation}
	\label{eq:cmoncm}
	W_{\rho'}(\mathsf{P}) := \mathsf{P} +  \sum_{n > 0} \frac{1}{(n+1)!} r^{(n)}_{\lhd\rho'}(\mathsf{P}) = \beta.
\end{equation}
\end{defi}

In the preceding definition, it is crucial to observe the parallelism between the relation $\beta = W_{\rho'}(\mathsf{P})$ and the relation $\beta = \theta_{\Psi}(\mathsf{K})$ in \eqref{eq:transporting}. Consequently, the c-monotone cumulant infinitesimal character is defined as the element in the Lie algebra $\mathfrak{g}$ which is mapped by $W_{\rho'}$ to the Boolean cumulant form of $\Phi$. This bears a striking resemblance to our shuffle-algebraic interpretation of the definition of c-free cumulants, outlined in \cref{rmk:transporting}.

\begin{rem}
In the case that $\Phi = \Psi$, we have that $\rho = \rho'$ and thus $W_\rho(\mathsf{P}) = \beta$ implies that $\mathsf{P} = \rho$ since $W_\rho$ coincides with the inverse of the pre-Lie Magnus operator \eqref{eq:Wmap}. In other words, $\mathsf{P}$ coincides with the infinitesimal character associated to the monotone cumulants of $\varphi$. On the other hand, the case $\rho' = 0$ implies that $\mathsf{P}=\beta$, i.e.~$\mathsf{P}$ coincides with the infinitesimal character associated to the Boolean cumulants of $\varphi$.
\end{rem}

In the context of \cref{def:cmoninfchar}, we may consider the map $W_{\rho'} : \mathfrak{g}\to\mathfrak{g}$. In the case where $\Phi = \Psi$, we have that $W_{\rho'}$ is the inverse of the pre-Lie Magnus operator. For the general case, the following result establishes that the operator $W_{\rho'}$ is invertible. 

\begin{prop}
\label{eq:MagnusExt}
The inverse with respect to composition of the operator  $W_{\rho'} : \mathfrak{g}\to\mathfrak{g}$ given by $W_\rho'(\alpha) = \sum_{n\geq0} \frac{1}{(n+1)!} r_{\lhd \rho'}^{(n)}(\alpha)$ is the operator $\Omega_{\rho'}:\mathfrak{g}\to\mathfrak{g}$ defined by
\begin{equation}
\label{eq:Magnusext0}
	\Omega_{\rho'}(\alpha) := \left(\frac{r_{\lhd\rho'}}{e^{r_{\lhd\rho'}}-\id}\right)(\alpha) =  \sum_{n\geq0} \frac{B_n}{n!} r^{(n)}_{\lhd\rho'}(\alpha),\qquad\mbox{for any }\,\alpha\in\mathfrak{g}.
\end{equation}
\end{prop}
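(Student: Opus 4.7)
The plan is to observe that both $W_{\rho'}$ and $\Omega_{\rho'}$ are formal power series evaluated at the single linear endomorphism $R := r_{\lhd\rho'}:\mathfrak{g}\to\mathfrak{g}$, and then reduce the claim to the classical identity of formal power series
\[
  \left(\sum_{n\geq 0}\frac{1}{(n+1)!}\,x^n\right)\left(\sum_{n\geq 0}\frac{B_n}{n!}\,x^n\right) \;=\; \frac{e^x-1}{x}\cdot\frac{x}{e^x-1} \;=\; 1.
\]
Since both $W_{\rho'}$ and $\Omega_{\rho'}$ are polynomials (formal power series) in the same operator $R$, they commute, and composing them term by term yields an operator-valued series in $R$ whose coefficients are the Cauchy product of the two scalar series above. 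Once convergence of the sums is controlled, this reduces the statement to the scalar identity.

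The first step is to make precise that the series defining $W_{\rho'}(\alpha)$ and $\Omega_{\rho'}(\alpha)$ are well-defined on each word. Because $\rho' \in \mathfrak{g}$ is an infinitesimal character, it vanishes on $\uno$ and on bar-separated products, so $\rho'(u)=0$ unless $u\in T_+(\A)$ has length $\geq 1$. Together with the fact that the half-shuffle products in \eqref{eq:halfshuffles} are compatible with the grading of $T(T_+(\A))$, this implies that for any $\alpha\in\mathfrak{g}$ and any word $w\in T_+(\A)$ of length $n$, the element $R^{k}(\alpha)(w)$ vanishes as soon as $k\geq n$ (each iteration of $\lhd \rho'$ consumes at least one letter). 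Hence the sums defining $W_{\rho'}(\alpha)$ and $\Omega_{\rho'}(\alpha)$ are finite when evaluated on any fixed word, so they define bona fide elements of $\mathfrak{g}$, and the operator identities below can be checked on each graded piece separately.

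The second step is the algebraic manipulation. Writing $W_{\rho'} = \sum_{n\geq 0} a_n R^n$ with $a_n = 1/(n+1)!$, and $\Omega_{\rho'} = \sum_{n\geq 0} b_n R^n$ with $b_n = B_n/n!$, we compute
\[
  \Omega_{\rho'}\circ W_{\rho'} \;=\; \sum_{n\geq 0}\left(\sum_{k=0}^{n} a_k b_{n-k}\right) R^n.
\]
The inner coefficients are exactly the Cauchy product of the Taylor coefficients of $(e^x-1)/x$ and $x/(e^x-1)$, which equals $1$ for $n=0$ and vanishes for $n\geq 1$. For every word $w$ of length $n$, only finitely many terms of the series contribute, so the reordering is legitimate and we obtain $\Omega_{\rho'}\circ W_{\rho'} = \id$ on $\mathfrak{g}$. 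The same calculation, read in the opposite order, yields $W_{\rho'}\circ \Omega_{\rho'} = \id$.

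The main obstacle, which is more of a bookkeeping point than a mathematical one, is justifying the interchange of the two infinite sums when composing $W_{\rho'}$ and $\Omega_{\rho'}$. This is handled by the local nilpotency of $R$ on each graded component noted above, which reduces the composition to a finite sum on every word and permits the use of the formal power series identity. Once this is in place, the proof is essentially the scalar identity $\tfrac{e^x-1}{x}\cdot\tfrac{x}{e^x-1}=1$ transported to the operator $r_{\lhd\rho'}$.
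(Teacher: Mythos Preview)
Your argument is correct and is precisely the standard formal-power-series proof: both operators are power series in the single endomorphism $R=r_{\lhd\rho'}$, the grading makes $R$ locally nilpotent so all sums are finite on each word, and the result reduces to the scalar identity $\frac{e^x-1}{x}\cdot\frac{x}{e^x-1}=1$. The paper does not actually spell out a proof of this proposition; it simply refers the reader to \cite[Cor.~3.24]{Reute}, whose argument is exactly the one you have written down.
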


For a proof of the previous statement, the reader can follow for instance the proof of \cite[Cor.~3.24]{Reute}. Once more, in the case $\Phi = \Psi$, we have that $\Omega_{\rho'}$ coincides with the pre-Lie Magnus operator given in \eqref{eq:Magnus}.

\begin{ex}
Let us evaluate \cref{eq:cmoncm} in the univariate case for words of small lengths. Using \cref{lem:iterated} with $\alpha_1 = \mathsf{P}$ and $\alpha_2 = \alpha_3 = \cdots = \alpha_n = \rho'$ and setting $a^n := a^{\otimes n}$, we obtain 
\begin{align*}
    \beta(a) 
    &= W_{\rho'}(\mathsf{P})(a) = \mathsf{P}(a)\\	
    \beta(a^2) 
    &= W_{\rho'}(\mathsf{P})(a^2) \\
    &= \mathsf{P}(a^2) + \frac{1}{2}(\mathsf{P}\lhd \rho')(a^2) = \mathsf{P}(a^2)\\
    \beta(a^3) 
    &= W_{\rho'}(\mathsf{P})(a^3) \\
    &= \mathsf{P}(a^3) + \frac{1}{2}(\mathsf{P}\lhd \rho')(a^3) = \mathsf{P}(a^3) + \frac{1}{2}\mathsf{P}(a^2)\rho'(a)\\
    \beta(a^4) 
    &= W_{\rho'}(\mathsf{P})(a^4) \\
    & = \mathsf{P}(a^4) + \mathsf{P}(a^3)\rho'(a) + \frac{1}{2}\mathsf{P}(a^2)\rho'(a^2) + \frac{1}{3}\mathsf{P}(a^2)\rho'(a)^2.
\end{align*}

Now, using the Boolean moment-cumulant formula, we proceed to compute moments
$$
	\Phi(a_1\cdots a_n) 
	= \E_\succ(\beta)(a_1\cdots a_n) 
	= \sum_{\pi\in \operatorname{I}(n)} \beta_\pi(a_1 \cdots a_n)
$$ 
in terms of the linear maps $\mathsf{P}$ and $\rho'$:
\begin{align*}
 \Phi(a)
 &= \beta(a) = \mathsf{P}(a)\\
    \Phi(a^2) 
    &= \beta(a^2) + \beta(a)^2 \\
    &= \mathsf{P}(a^2) + \mathsf{P}(a)^2\\
   \Phi(a^3) 
   &= \beta(a^3) + 2\beta(a^2)\beta(a) + \beta(a)^3 \\
   &= \mathsf{P}(a^3) + 2\mathsf{P}(a^2)\mathsf{P}(a) + \frac{1}{2}\mathsf{P}(a^2)\rho'(a) + \mathsf{P}(a)^3\\
   \Phi(a^4) 
   &= \beta(a^4) + \beta(a^2)\beta(a^2) + 2\beta(a^3)\beta(a) +  3\beta(a^2)\beta(a)^2 + \beta(a)^4 \\
   &= \mathsf{P}(a^4) + \mathsf{P}(a^2)^2 +2\mathsf{P}(a^3)\mathsf{P}(a) + \mathsf{P}(a^3)\rho'(a) + 3\mathsf{P}(a^2)\mathsf{P}(a)^2 \\
   &\quad +\frac{1}{3}\mathsf{P}(a^2)\rho'(a)^2+\frac{1}{2}\mathsf{P}(a^2)\rho'(a^2)+  \mathsf{P}(a^2)\mathsf{P}(a)\rho'(a) + \mathsf{P}(a)^4,
\end{align*}
and observe that these computations match those given in Example 4.8 in \cite{Has}.
\end{ex}

In order to prove that the infinitesimal character $\mathsf{P}$ is indeed the associated linear form to the c-monotone cumulants, we now prove the following result which is analog to Lemma 15 in \cite{EFP4}.

\begin{lemma}
\label{lem:Wrho}
For any word $w=a_1\cdots a_n \in \A^{\otimes n}$ we have
\begin{equation}
\label{eq:lemma}
	W_{\rho'}(\mathsf{P})(w) 
	= \sum_{\substack{\pi\in \mathrm{M}^{\scriptscriptstyle\mathrm{irr}}(n) \\ 1\in \pi_1 \in \pi}}  \frac{1}{|\pi|!} \mathsf{P}(w_{\pi_1})
	\prod_{\substack{\pi_j\in \pi\\ j \neq 1}} \rho'(w_{\pi_j}).
\end{equation}
\end{lemma}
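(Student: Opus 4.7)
The proof reduces to a direct application of \cref{lem:iterated} together with an elementary reindexing. The plan is to expand the defining series of $W_{\rho'}$ from \eqref{eq:cmoncm},
\[ W_{\rho'}(\mathsf{P})(w) = \sum_{k \geq 0} \frac{1}{(k+1)!}\, r^{(k)}_{\lhd \rho'}(\mathsf{P})(w), \]
and treat each term on the right separately.

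For a fixed $k \geq 0$, the summand involves the $k$-fold iterated right pre-Lie product $r^{(k)}_{\lhd \rho'}(\mathsf{P}) = (\cdots ((\mathsf{P} \lhd \rho') \lhd \rho') \cdots) \lhd \rho'$, with exactly $k$ copies of $\rho'$. Applying \cref{lem:iterated} with the specialization $\alpha_1 = \mathsf{P}$ and $\alpha_2 = \cdots = \alpha_{k+1} = \rho'$, one evaluates this on $w$ as a sum over $\mathrm{M}^{\scriptscriptstyle\mathrm{irr}}_{k+1}(n)$ in which $\mathsf{P}$ is paired with the subword attached to the block $\pi_1$ that is first in the monotone order, and $\rho'$ with the subwords attached to the remaining blocks $\pi_2, \ldots, \pi_{k+1}$.

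Summing over $k \geq 0$ and reindexing by $|\pi| = k+1$ then collapses the double sum to a single sum over $\pi \in \mathrm{M}^{\scriptscriptstyle\mathrm{irr}}(n)$, each contributing a factor $1/|\pi|!$, which is precisely the right-hand side of \eqref{eq:lemma}. To finish, I would observe that the condition $1 \in \pi_1$ in the statement is not an additional restriction: the block labelled $1$ by the monotone order of an irreducible non-crossing partition must be the minimum of the nesting partial order, hence the unique outer block, which by irreducibility contains both $1$ and $n$.

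I do not anticipate any serious obstacle beyond bookkeeping. The only technical point to watch is that \cref{lem:iterated} is stated under a length condition $|w| = m \geq n+2$, so one may need to verify the formula by direct inspection for the terms in which the number of blocks $k+1$ is close to $n$. In particular, the $k=0$ case corresponds to the unique one-block irreducible monotone partition $\pi = \{[n]\}$ and reduces the identity to $\mathsf{P}(w) = \mathsf{P}(w)$, matching the leading term of the series on the left-hand side.
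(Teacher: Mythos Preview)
Your proposal is correct and follows essentially the same approach as the paper: expand the series defining $W_{\rho'}(\mathsf{P})$, apply \cref{lem:iterated} with $\alpha_1=\mathsf{P}$ and $\alpha_2=\cdots=\alpha_{k+1}=\rho'$ to each iterated pre-Lie term, and reindex the resulting double sum over $\mathrm{M}^{\scriptscriptstyle\mathrm{irr}}(n)$. Your additional remarks that $1\in\pi_1$ is automatic for an irreducible monotone partition and that the boundary cases near the length threshold of \cref{lem:iterated} deserve a direct check are apt clarifications that the paper leaves implicit.
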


\begin{proof}
Recall \cref{eq:cmoncm}. By using \eqref{eq:iterated}, we have that
$$
 	r^{(m)}_{\lhd\rho'}(\mathsf{P})(w) 
	= \sum_{\substack{\pi\in \mathrm{M}^{\scriptscriptstyle\mathrm{irr}}_{m+1}(n)\\ \pi = (\pi_1,\ldots,\pi_{m+1})}} 
	\mathsf{P}(w_{\pi_1}) \prod_{j=2}^{m+1} \rho'(w_{\pi_j}),\quad\forall\,m\geq0,
$$
where $\pi_1$ is the unique outer block of the irreducible monotone partition $\pi$ which contains $1$ and $n$. We conclude then that
$$
	W_{\rho'}(\mathsf{P}) 
	= \sum_{m\geq0} \frac{1}{(m+1)!} r^{(m)}_{\lhd\rho'}(\mathsf{P}) 
	=  \sum_{\substack{\pi\in \mathrm{M}^{\scriptscriptstyle\mathrm{irr}}(n)\\1\in \pi_1}}  \frac{1}{|\pi|!} \mathsf{P}(w_{\pi_1})
	\prod_{\substack{\pi_j \in \pi \\ j \neq 1}} \rho'(w_{\pi_j}).
$$
\end{proof}

\begin{theo}
\label{prop:46P}
The linear form $\mathsf{P}$ satisfying \eqref{eq:cmoncm} is the infinitesimal character associated to the conditionally monotone cumulant functionals $\{h_n^{(c)} : \A^{n} \to \mathbb{C}\}_{n > 0}$ of $(\A,\varphi,\psi)$, i.e.~
$$
	\mathsf{P}(w) = h_n^{(c)}(a_1,\ldots,a_n),
$$
for any word $w= a_1\cdots a_n \in T_+(\A).$
\end{theo}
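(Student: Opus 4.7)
The plan is to verify that the infinitesimal character $\mathsf{P}$ defined by $W_{\rho'}(\mathsf{P}) = \beta$ satisfies exactly the c-monotone moment-cumulant relation \eqref{eq:CMonMC}. Since that relation is a triangular system of equations that uniquely determines its solution, it will follow that $\mathsf{P}(w) = h^{(c)}_n(a_1,\ldots,a_n)$ for every word $w = a_1\cdots a_n$.

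First, evaluate $\Phi$ on $w$ by way of the Boolean moment-cumulant identity $\Phi = \mathcal{E}_\succ(\beta)$, obtaining
\[
\varphi(a_1\cdots a_n) = \Phi(w) = \sum_{\sigma \in \operatorname{I}(n)} \prod_{\sigma_k \in \sigma} \beta(w_{\sigma_k}).
\]
Now substitute into each factor the expression for $\beta(w_{\sigma_k})$ supplied by \Cref{lem:Wrho}, namely a sum over irreducible monotone non-crossing partitions of the block $\sigma_k$, weighted by $1/|\tau|!$ and with $\mathsf{P}$ applied to the unique outer block and $\rho'$ to the remaining blocks. This produces a double sum indexed by pairs consisting of an interval partition $\sigma$ of $[n]$ together with, for each $\sigma_k$, an irreducible monotone non-crossing partition $\tau^{(k)}$ of $\sigma_k$.

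The key combinatorial step is to reinterpret these pairs in terms of genuine monotone non-crossing partitions of $[n]$. Using the decomposition of any $\pi \in \mathrm{NC}(n)$ into its irreducible components $\pi^{(1)},\ldots,\pi^{(\ell)}$ (as recalled in \Cref{ssec:prelim}), the data of such a pair is exactly the data of a non-crossing partition of $[n]$ together with a monotone labeling of each irreducible component. A full monotone labeling of $\pi$ is then obtained by choosing an interleaving of the labels of the components, and the number of such interleavings is the multinomial coefficient $|\pi|!/(|\pi^{(1)}|!\cdots|\pi^{(\ell)}|!)$. Combining this with the weights $\prod_k 1/|\tau^{(k)}|!$ yields
\[
\varphi(a_1\cdots a_n) = \sum_{\pi \in \mathrm{M}(n)} \frac{1}{|\pi|!}\Bigg(\prod_{\substack{\pi_i \in \pi\\ \pi_i\,\mathrm{outer}}} \mathsf{P}(w_{\pi_i})\Bigg)\Bigg(\prod_{\substack{\pi_j \in \pi\\ \pi_j\,\mathrm{inner}}} \rho'(w_{\pi_j})\Bigg),
\]
where outer/inner is unambiguous since a block is outer in $\pi$ iff it is the root of the irreducible component in which it lies.

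Finally, using $m(\pi) = |\pi|!/t(\pi)!$ from \Cref{rmk:monotonepartitions}(ii) to convert the sum over monotone partitions into a sum over $\mathrm{NC}(n)$ weighted by $1/t(\pi)!$, and invoking \Cref{thm:linkNCP} to identify $\rho'(w_{\pi_j})$ with $h'_{|\pi_j|}(a_1,\ldots,a_n|\pi_j)$, we obtain precisely the relation \eqref{eq:CMonMC} with $\mathsf{P}$ in the role of the c-monotone cumulants. Since \eqref{eq:CMonMC} determines the functionals $h^{(c)}_n$ recursively and uniquely from $\varphi$ and the monotone cumulants of $\psi$, we conclude $\mathsf{P}(w) = h^{(c)}_n(a_1,\ldots,a_n)$. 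The only non-routine step is the combinatorial repackaging in the third paragraph; everything else is an immediate consequence of \Cref{lem:Wrho}, the Boolean moment-cumulant formula, and uniqueness of the defining system for $h^{(c)}_n$.
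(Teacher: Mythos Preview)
Your proof is correct and follows essentially the same route as the paper: expand $\Phi$ via the Boolean moment-cumulant relation, replace each $\beta$-factor by the expression from \Cref{lem:Wrho}, and reassemble the resulting data (an interval partition together with an irreducible monotone partition of each interval) as a non-crossing partition decomposed into irreducible components. The only cosmetic difference is that the paper uses the recursive fixed-point form $\Phi=\epsilon+\Phi\succ\beta$ and induction, converting each irreducible component to the weight $1/t(\pi^{(k)})!$ and then invoking $t(\pi)!=\prod_k t(\pi^{(k)})!$, whereas you use the closed Boolean sum over $\mathrm{I}(n)$ and pass through the multinomial interleaving to land in $\mathrm{M}(n)$ before converting to $1/t(\pi)!$; the two bookkeeping orders are equivalent.
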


\begin{proof}
We will show that the evaluation of $\mathsf{P}$ on a word $w\in \A^{\otimes n}$ satisfies the moment-cumulant relation in \cref{prop:Hasebe}. Indeed, by definition of $\mathsf{P}$ and the right half-shuffle fixed point equation \eqref{eq:FixedEq}, we have that 
$$
	\Phi = \E_\succ(\beta) = \E_{\succ}(W_{\rho'}(\mathsf{P})) = \epsilon + \Phi\succ W_{\rho'}(\mathsf{P}).
$$ 
Then, for a word $w= a_1\cdots a_n \in \A^{\otimes n}$ we compute
\begin{eqnarray*}
	\Phi(w) 
	&=& \sum_{k=2}^n W_{\rho'}(\mathsf{P})(a_1\cdots a_{k-1}) \Phi(a_k\cdots a_n)\\ 
	&=&  \sum_{k=2}^n\left(\sum_{\substack{\pi^{(1)} \in \mathrm{M}^{\scriptscriptstyle\mathrm{irr}}(k-1)\\ 1 \in \pi^{(1)}_1}} 
	\frac{1}{|\pi^{(1)}|!} \mathsf{P}((a_1\cdots a_{k-1})_{\pi_1})
	\prod_{\substack{\pi_j \in \pi^{(1)}\\ j \neq 1}} \rho'((a_1\cdots a_{k-1})_{\pi_j}) \right) \Phi(a_k\cdots a_n),
\end{eqnarray*}
where we used that $W_{\rho'}(\mathsf{P})(\uno)=0$ in the first equality, and \cref{lem:Wrho} in the second equality. By induction on $\Phi(a_k\cdots a_n)$, we will obtain a sequence of irreducible non-crossing partitions $(\pi^{(1)},\ldots,\pi^{(s)})$ whose unique outer blocks index the subwords that are evaluated on $\mathsf{P}$. These irreducible non-crossing partitions are exactly the irreducible components of a non-crossing partition $\pi\in \NC(n)$. Also, by \cref{rmk:monotonepartitions}, we can re-index the sums as  sums over non-crossing partitions and replace $\frac{1}{|\pi|!}$ by $\frac{1}{t(\pi)!}$. Finally, by recalling that if $\pi$ is a non-irreducible non-crossing partition with irreducible components $\pi^{(1)},\ldots,\pi^{(s)}$, then $t(\pi)!  = t(\pi^{(1)})!\cdots t(\pi^{(s)})!$, we conclude that
$$
	\Phi(w) = \sum_{\pi\in\NC(n)} \frac{1}{t(\pi)!}\left( 
	\prod_{\substack{\pi_i\in\pi\\ \pi_i\,\mathrm{outer}}} \mathsf{P}(w_{\pi_i}) \right)\left( 
	\prod_{\substack{\pi_j\in\pi\\ \pi_j\,\mathrm{inner}}} \rho'(w_{\pi_j}) \right),
$$
as we wanted to show.
\end{proof}


\section{Conditionally monotone convolution via shuffle algebra}
\label{sec:condmonoshuffle}

After successfully establishing c-monotone cumulants within the shuffle-algebraic framework, the next objective is to formulate the c-monotone additive convolution as an operation applied to pairs of characters within the double tensor Hopf algebra $T(T_+(\A)).$
\par Recall that c-free cumulants of a pair of characters $(\Phi,\Psi)$ can be considered in the shuffle-algebraic picture by generalizing the half-shuffle equations \eqref{eq:FreeToBool} for $\Psi$ in such a way that $\mathsf{K}$ is obtained at the level of Lie algebra by transporting the Boolean cumulant  $\beta$ of $\Phi$ by $\Psi$, as described in \eqref{eq:CFreeChar}. In the same way, c-free additive convolution of $(\Phi_1,\Psi_1)$ and $(\Phi_2,\Psi_2)$ in the shuffle picture can be described by considering the Boolean cumulants of $\Psi = \Psi_1\boxplus \Psi_2$ but using $\mathsf{K}_1 + \mathsf{K}_2$ in \eqref{eq:FreeToBool} instead of the free cumulants of $\Psi$, so that we obtain the definition of $\Phi$ in \eqref{eq:phiconv}. In essence, the concepts of c-free cumulants and c-free convolution can be viewed as extensions of the free case. This extension is achieved by substituting the Boolean and free cumulants, $\beta'$ and $\kappa'$, of $\Psi$ with the Boolean cumulants $\beta$ of $\Phi$ respectively the linear form $\mathsf{K}$. Adopting this perspective allows us to recover the free case when setting $\Phi$ equal to $\Psi.$

Now, our aim is to formulate the additive convolution within the c-monotone case. The definition of c-monotone cumulants, as expressed through the infinitesimal character in \eqref{eq:cmoncm}, mirrors the approach taken in the c-free case. This involves transporting the Boolean cumulant $\beta$ associated with $\Phi$ via $\Psi$, utilizing an extension of the pre-Lie Magnus expansion. This extension establishes a connection between the Boolean and monotone cumulants of $\Psi$. To proceed with defining c-monotone additive convolution and drawing an analogy with the c-free case, our initial step is to delineate the Boolean cumulants of the monotone additive convolution $\Psi_1*\Psi_2.$

\begin{prop}
Let $\Psi_1$ and $\Psi_2$ be two characters on $T(T_+(\A))$ such that $\Psi_i = \E_\succ(\beta'_i)$, $i=1,2$. Then 
\begin{equation}
\label{eq:BCH}
	\Psi_1*\Psi_2 = \E_\succ(\beta'_1\#\beta'_2),
\end{equation}
where the infinitesimal character $\beta'_1\#\beta'_2:=\beta'_2 + \Psi_2^{*-1}\succ \beta'_1 \prec \Psi_2.$
\end{prop}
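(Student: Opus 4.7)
The plan is to verify that the infinitesimal character $\beta'_1\#\beta'_2$ satisfies the right half-shuffle fixed point equation characterising the Boolean cumulant of $\Psi_1*\Psi_2$, namely
\[
    \Psi_1*\Psi_2 = \epsilon + (\Psi_1*\Psi_2) \succ (\beta'_1\#\beta'_2).
\]
Once this is done, the uniqueness clause in \cref{thm:mainEFP} (see also the fixed point equation \eqref{eq:FixedEq}) immediately yields $\Psi_1*\Psi_2 = \E_\succ(\beta'_1\#\beta'_2)$. Observe that $\beta'_1\#\beta'_2$ indeed lies in $\mathfrak{g}$, since $\beta'_2$ is an infinitesimal character and the shuffle adjoint action $\theta_{\Psi_2}(\beta'_1)=\Psi_2^{*-1}\succ\beta'_1\prec\Psi_2$ preserves the Lie algebra of infinitesimal characters.

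The verification is a direct manipulation using only the shuffle axioms \eqref{eq:shuffle}, together with the identities $\Psi_i \succ \beta'_i = \Psi_i-\epsilon$ for $i=1,2$, which are equivalent to $\Psi_i = \E_\succ(\beta'_i)$ by \eqref{eq:FixedEq}. Expanding $(\Psi_1*\Psi_2)\succ(\beta'_1\#\beta'_2)$ into two summands, the first is handled via the axiom $(f*g)\succ h = f\succ(g\succ h)$:
\[
    (\Psi_1*\Psi_2)\succ\beta'_2 = \Psi_1\succ(\Psi_2\succ\beta'_2) = \Psi_1\succ(\Psi_2-\epsilon) = \Psi_1\succ\Psi_2.
\]
For the second summand, the same axiom, applied with $g=\Psi_2^{*-1}$, cancels $\Psi_2$ against $\Psi_2^{*-1}$ in the convolution and gives
\[
    (\Psi_1*\Psi_2)\succ\bigl(\Psi_2^{*-1}\succ(\beta'_1\prec\Psi_2)\bigr) = \Psi_1\succ(\beta'_1\prec\Psi_2).
\]
The axiom $(f\succ g)\prec h = f\succ(g\prec h)$ then rewrites this as $(\Psi_1\succ\beta'_1)\prec\Psi_2 = (\Psi_1-\epsilon)\prec\Psi_2 = \Psi_1\prec\Psi_2$. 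Adding the two contributions yields $\Psi_1\succ\Psi_2 + \Psi_1\prec\Psi_2 = \Psi_1*\Psi_2 - \epsilon$, which is exactly the required fixed point identity.

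There is no genuine obstacle in the argument; the only point demanding care is the bookkeeping at the counit. The conventions $\epsilon\prec f=0=f\succ\epsilon$ ensure that the stray terms $\Psi_1\succ\epsilon$ and $\epsilon\prec\Psi_2$ produced when substituting $\Psi_i\succ\beta'_i = \Psi_i-\epsilon$ vanish, and the identity $\Psi_1\prec\Psi_2 + \Psi_1\succ\Psi_2 = \Psi_1*\Psi_2 - \epsilon$ provides the $-\epsilon$ correction needed to match the reduced right-hand side of the fixed point equation. An alternative formulation would apply the right half-shuffle logarithm $\mathcal{L}_\succ$ of \eqref{eq:shuffleLogs} directly to $\Psi_1*\Psi_2$ and manipulate $(\Psi_2^{*-1}*\Psi_1^{*-1})\succ(\Psi_1*\Psi_2-\epsilon)$ with the same shuffle axioms, arriving at the same conclusion.
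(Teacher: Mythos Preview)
Your proof is correct and takes a genuinely different route from the paper's argument. The paper proves the identity by passing through the pre-Lie Magnus expansion: it writes $\E_\succ(\beta'_i)=\exp^*(\Omega(\beta'_i))$, invokes the Baker--Campbell--Hausdorff formula for the group $G$, and then appeals to the known identity $\operatorname{BCH}(\Omega(\beta'_1),\Omega(\beta'_2))=\Omega(\beta'_1\#\beta'_2)$ from \cite{EFP2,CP21,Manchon2011}. Your argument is more elementary and entirely self-contained: you verify directly that $\beta'_1\#\beta'_2$ solves the right half-shuffle fixed point equation for $\Psi_1*\Psi_2$, using only the three shuffle axioms \eqref{eq:shuffle}, the fixed point relations $\Psi_i\succ\beta'_i=\Psi_i-\epsilon$, and the counit conventions. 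This avoids any reliance on the Magnus/BCH machinery and makes the proposition independent of those external references. The paper's approach, on the other hand, situates the formula conceptually as the shadow of BCH under $\Omega$, which is what motivates the later use of \eqref{eq:BCH} in defining c-monotone convolution; your computation obscures that provenance but gains in directness.
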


\begin{proof}
The result follows from the pre-Lie Magnus expansion and its relation with the Baker--Campbell--Hausdorff formula \cite{AG81}:
\begin{eqnarray*}
  \Psi_1*\Psi_2&=& \E_\succ(\beta'_1)*\E_\succ(\beta'_2)\\  
    &=& \exp^*(\Omega(\beta'_1))*\exp^*(\Omega(\beta'_2)) \\ 
    &=& \exp^*(\operatorname{BCH}(\Omega(\beta'_1),\Omega(\beta'_2))) \\ 
    &=& \exp^*(\Omega(\beta'_1 \# \beta'_2)) \\ 
    &=& \E_\succ (\beta'_1 \#\beta'_2).
\end{eqnarray*}
We refer the reader to  \cite[Sec.~4]{EFP2}, \cite[Sec.~1.1]{Manchon2011} and \cite[Sec.~6.6]{CP21} for details of the previous computation.
\end{proof}

Following the idea of replacing $\beta'$ by $\beta = \mathcal{L}_\succ(\Phi)$, we propose the following definition.

\begin{defi}[Conditionally monotone additive convolution]
\label{def:cmonconvC}
Let $(\Phi_1,\Psi_1)$ and $(\Phi_2,\Psi_2)$ be two pairs of characters on the double tensor Hopf algebra $T(T_+(\A))$. We define the \textit{conditionally monotone additive convolution of $(\Phi_1,\Psi_1)$ and $(\Phi_2,\Psi_2)$} as the pair of characters $(\Phi,\Psi)$ given by
\begin{eqnarray*}
        \Psi &=& \Psi_1*\Psi_2\\
        \Phi &=& \E_\succ\big(\beta_2 + \Psi_2^{*-1}\succ \beta_1\prec \Psi_2 \big),
\end{eqnarray*}
where $\Phi_1 = \E_\succ(\beta_1)$ and $\Phi_2 = \E_\succ(\beta_2)$. The conditionally monotone additive convolution of $(\Phi_1,\Psi_1)$ and $(\Phi_2,\Psi_2)$ is denoted by $(\Phi_1,\Psi_1) * (\Phi_2,\Psi_2)$.
\end{defi}

The previous definition induces an operation on the set of pair of characters on $T(T_+(\A))$ that is indeed associative.

\begin{prop}
The c-monotone additive convolution is an associative product on the set of pairs of characters of $T(T_+(\A))$.
\end{prop}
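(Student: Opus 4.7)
The plan is to verify associativity directly, componentwise, on arbitrary triples $(\Phi_1,\Psi_1), (\Phi_2,\Psi_2), (\Phi_3,\Psi_3)$ of pairs of characters. Writing $\Phi_i=\E_\succ(\beta_i)$ for $i=1,2,3$, one must show that
\[
\big((\Phi_1,\Psi_1)*(\Phi_2,\Psi_2)\big)*(\Phi_3,\Psi_3)
=(\Phi_1,\Psi_1)*\big((\Phi_2,\Psi_2)*(\Phi_3,\Psi_3)\big).
\]
For the second component, both sides produce $\Psi_1*\Psi_2*\Psi_3$, which agrees because the convolution product $*$ on $\mathrm{Hom}(T(T_+(\A)),\mathbb{C})$ is associative (coassociativity of $\Delta$).

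For the first component, I would use that $\E_\succ:\mathfrak{g}\to G$ is a bijection (\cref{thm:mainEFP}), so it suffices to show that the Boolean-cumulant infinitesimal characters $\mathcal{L}_\succ$ of both sides agree. Unfolding \cref{def:cmonconvC}, the left-hand pair has first component $\E_\succ(\mathsf{B}_L)$ and the right-hand pair has first component $\E_\succ(\mathsf{B}_R)$, where
\begin{align*}
\mathsf{B}_L &= \beta_3+\Psi_3^{*-1}\succ\big(\beta_2+\Psi_2^{*-1}\succ\beta_1\prec\Psi_2\big)\prec\Psi_3,\\
\mathsf{B}_R &= \beta_3+\Psi_3^{*-1}\succ\beta_2\prec\Psi_3
+(\Psi_2*\Psi_3)^{*-1}\succ\beta_1\prec(\Psi_2*\Psi_3).
\end{align*}
After canceling the common summand $\beta_3+\Psi_3^{*-1}\succ\beta_2\prec\Psi_3$ (using that $\succ$ and $\prec$ distribute over $+$), the task reduces to the single identity
\[
\Psi_3^{*-1}\succ(\Psi_2^{*-1}\succ\beta_1\prec\Psi_2)\prec\Psi_3
=(\Psi_2*\Psi_3)^{*-1}\succ\beta_1\prec(\Psi_2*\Psi_3).
\]

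The core step is then a routine manipulation using the three shuffle axioms \eqref{eq:shuffle}. I would parse the left-hand side and apply, in order, axiom $(f\succ g)\prec h=f\succ(g\prec h)$ to pull the outer $\prec\Psi_3$ inside, axiom $(f\prec g)\prec h=f\prec(g*h)$ to merge $\prec\Psi_2\prec\Psi_3$ into $\prec(\Psi_2*\Psi_3)$, the same axiom (2) again to pull $\Psi_3^{*-1}\succ$ past the $\prec$, and finally axiom $f\succ(g\succ h)=(f*g)\succ h$ to merge $\Psi_3^{*-1}\succ\Psi_2^{*-1}\succ$ into $(\Psi_3^{*-1}*\Psi_2^{*-1})\succ=(\Psi_2*\Psi_3)^{*-1}\succ$. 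This yields exactly the right-hand side.

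The main (mild) obstacle is bookkeeping of parenthesizations in mixed $\succ/\prec$ expressions, which axiom (2) ensures is unambiguous for a single $f\succ g\prec h$ but requires care once more factors appear; once one applies (2), (1), (2), (3) in the order above the computation is mechanical. No deep property is needed beyond the shuffle identities, associativity of $*$, and the bijectivity of $\E_\succ$.
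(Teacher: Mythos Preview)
Your proof is correct and follows essentially the same approach as the paper: both arguments check the second component via associativity of $*$, reduce the first component to equality of the Boolean-cumulant infinitesimal characters $\mathsf{B}_L$ and $\mathsf{B}_R$, and verify the resulting identity $\Psi_3^{*-1}\succ(\Psi_2^{*-1}\succ\beta_1\prec\Psi_2)\prec\Psi_3=(\Psi_2*\Psi_3)^{*-1}\succ\beta_1\prec(\Psi_2*\Psi_3)$ via the shuffle axioms \eqref{eq:shuffle}. Your presentation is slightly more explicit about invoking the bijectivity of $\E_\succ$ and the order in which the three axioms are applied, but the substance is identical.
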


\begin{proof}
Let $(\Phi_i,\Psi_i)$, $i=1,2,3$, be pairs of characters on $T(T_+(\A))$, and we write $\Phi_i = \E_\succ(\beta_i)$. By definition, we have that
$$
	(\Phi_1,\Psi_1)*(\Phi_2,\Psi_2) = \big(\E_\succ(\beta) ,\Psi_1*\Psi_2\big),
$$
with $\beta:= \beta_2 + \Psi_2^{*-1}\succ\beta_1\prec \Psi_2$. Using the fact that the convolution product $*$ is associative on the set of linear functionals on $T(T_+(\A))$, we have
$$
	\big(\E_\succ(\beta),\Psi_1*\Psi_2\big) * (\Phi_3,\Psi_3)
	=\big(\E_\succ(\beta_3 + \Psi_3^{*-1}\succ \beta \prec \Psi_3 ) ,\Psi_1*\Psi_2*\Psi_3\big),
$$
where 
\begin{eqnarray*}
	\beta_3 + \Psi_3^{*-1}\succ \beta \prec \Psi_3  
	&=& \beta_3 + \Psi_3^{*-1}\succ (\beta_2 + \Psi_2^{*-1}\succ\beta_1\prec \Psi_2)\prec \Psi_3 \\ 
	&=&  \beta_3 + \Psi_3^{*-1}\succ\beta_2\prec \Psi_3 + (\Psi_2* \Psi_3)^{*-1}\succ\beta_1\prec (\Psi_2*\Psi_3).
\end{eqnarray*}
We used the shuffle identities \eqref{eq:shuffle} in the last equality above. On the other hand, we have that
$$
	(\Phi_2,\Psi_2)*(\Phi_3,\Psi_3) = \big(\E_\succ(\tilde\beta) ,\Psi_2*\Psi_3\big),
$$
with $\tilde\beta:= \beta_3 + \Psi_3^{*-1}\succ\beta_2\prec \Psi_3$. Again, by associativity of the convolution product, we get
$$
	(\Phi_1,\Psi_1)* \big(\E_\succ(\tilde\beta), \Psi_2*\Psi_3 \big) =
	 \big(\E_\succ(\tilde\beta + (\Psi_2*\Psi_3)^{*-1}\succ\beta_1 \prec (\Psi_2*\Psi_3) ) , \Psi_1*\Psi_2*\Psi_3\big).
$$
We conclude the proof by noticing that
\begin{eqnarray*}
	\beta_3 + \Psi_3^{*-1}\succ \beta \prec \Psi_3 
	&=& \beta_3 + \Psi_3^{*-1}\succ\beta_2\prec \Psi_3 + (\Psi_2* \Psi_3)^{*-1}\succ\beta_1\prec (\Psi_2*\Psi_3)\\ 
	&=& \tilde\beta + (\Psi_2*\Psi_3)^{*-1}\succ\beta_1 \prec (\Psi_2*\Psi_3).
\end{eqnarray*}
\end{proof}

The next proposition establishes that the associative product given in \cref{def:cmonconvC} actually extends the c-monotone additive convolution of distributions in \cite{Has}. Together with the previous proposition, the next result implies that the c-monotone additive convolution is an associative product in the space of pairs of distributions of random variables.

\begin{theo}
\label{prop:cmonconvC}
Let $(\A,\varphi,\psi)$ be a conditionally non-commutative probability space and consider two pairs of distributions, $(\mu_1,\nu_1)$ and $(\mu_2,\nu_2)$, with their respective extensions, $(\Phi_1,\Psi_1)$ and $(\Phi_2,\Psi_2)$, to two pairs of characters on $T(T_+(\A))$. Then $(\Phi_1,\Psi_1) * (\Phi_2,\Psi_2)$ is the pair of characters extending the c-monotone additive convolution $(\mu_1,\nu_1)\blacktriangleright(\mu_2,\nu_2)$.
\end{theo}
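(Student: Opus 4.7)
The strategy is to prove the two components of the pair $(\Phi,\Psi) = (\Phi_1,\Psi_1)*(\Phi_2,\Psi_2)$ separately, reducing each to a previously established theorem. The $\Psi$-component is handled by \cref{thm:ShuffleConv}; the $\Phi$-component is handled by recognizing it as the $\Phi$-component of a suitably chosen c-free additive convolution and invoking the c-free shuffle theorem at the end of \cref{sec:shuffle}.

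\textbf{Component $\Psi$.} By \cref{def:cmonconvC}, $\Psi = \Psi_1 * \Psi_2$. By \cref{thm:ShuffleConv}~iii) and \cref{rmk:consideration}, this is the character on $T(T_+(\A))$ extending the monotone additive convolution $\nu_1\blacktriangleright\nu_2$, which matches the second component prescribed by Hasebe in \cref{def:cmonconv}.

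\textbf{Component $\Phi$.} The idea is to introduce an auxiliary c-free additive convolution whose first component coincides with $\Phi$. Let $\Psi_0$ denote the character on $T(T_+(\A))$ extending $\delta_0$. From the recipe in \cref{rmk:consideration}, $\Psi_0(w) = 0$ for every $w \in T_+(\A)$ in the relevant subalgebra, hence $\Psi_0$ agrees with the counit $\epsilon$ there. I would then form the shuffle c-free additive convolution $(\Phi_1,\Psi_0) \boxplus (\Phi_2,\Psi_2)$ as in \eqref{eq:cfreeconv}--\eqref{eq:phiconv}. Its second component is $\Psi_0 \boxplus \Psi_2 = \Psi_2$, since $\Psi_0 = \epsilon$ has vanishing free cumulants, and its first component is
\[
	\E_\succ\!\left(\Psi_2^{*-1}\succ (\mathsf{K}_1^{(0)}+\mathsf{K}_2)\prec \Psi_2\right),
\]
where $\mathsf{K}_1^{(0)}$ and $\mathsf{K}_2$ are the c-free cumulant infinitesimal characters \eqref{eq:CFreeChar} associated to $(\Phi_1,\Psi_0)$ and $(\Phi_2,\Psi_2)$.

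\textbf{Shuffle-algebraic simplification.} Since $\Psi_0 = \epsilon$ acts as a unit for $*$, $\prec$ and $\succ$, one gets $\mathsf{K}_1^{(0)} = \Psi_0\succ\beta_1\prec\Psi_0^{*-1} = \beta_1$. For the second summand, using the shuffle axioms \eqref{eq:shuffle} repeatedly, one computes
\[
	\Psi_2^{*-1}\succ\mathsf{K}_2\prec\Psi_2
	= \Psi_2^{*-1}\succ \bigl(\Psi_2\succ \beta_2\prec \Psi_2^{*-1}\bigr)\prec\Psi_2
	= (\Psi_2^{*-1}*\Psi_2)\succ \beta_2\prec(\Psi_2^{*-1}*\Psi_2) = \beta_2.
\]
Therefore the first component of $(\Phi_1,\Psi_0) \boxplus (\Phi_2,\Psi_2)$ equals
\[
	\E_\succ\!\left(\Psi_2^{*-1}\succ \beta_1 \prec \Psi_2 + \beta_2\right),
\]
which is precisely $\Phi$ from \cref{def:cmonconvC}.

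\textbf{Conclusion.} By the c-free shuffle theorem at the end of \cref{sec:shuffle}, the pair of characters $(\Phi_1,\Psi_0)\boxplus(\Phi_2,\Psi_2)$ extends the c-free additive convolution $(\mu_1,\delta_0)\boxplus(\mu_2,\nu_2)$. Comparing first components, $\Phi$ extends $\mu_1\!\leftindex_{\delta_0}\boxplus_{\nu_2}\mu_2$. Together with the result for $\Psi$ above and Hasebe's \cref{def:cmonconv}, the pair $(\Phi,\Psi)$ extends $(\mu_1,\nu_1)\blacktriangleright(\mu_2,\nu_2)$, as desired. The main obstacle is modest: it is the shuffle-algebraic simplification above, which requires a careful chain of applications of the three axioms in \eqref{eq:shuffle}; once that identity is secured, the theorem falls out by matching the c-monotone shuffle formula against the c-free shuffle formula applied to the auxiliary pair in which the first input has null second functional.
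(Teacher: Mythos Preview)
Your proposal is correct and follows essentially the same route as the paper: both handle the $\Psi$-component via \cref{thm:ShuffleConv}, then identify the $\Phi$-component with the first coordinate of the auxiliary c-free convolution $(\Phi_1,\epsilon)\boxplus(\Phi_2,\Psi_2)$, simplify $\mathsf{K}_1^{(0)}=\beta_1$ and $\Psi_2^{*-1}\succ\mathsf{K}_2\prec\Psi_2=\beta_2$ via the shuffle axioms, and match the result against \cref{def:cmonconvC} and Hasebe's \cref{def:cmonconv}. The only cosmetic difference is that the paper writes the final shuffle computation as a single chain starting from $\E_\succ\big(\Psi_2^{*-1}\succ(\mathsf{K}_1+\mathsf{K}_2)\prec\Psi_2\big)$, whereas you treat the two summands separately before recombining.
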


\begin{proof}
Let $(\Phi,\Psi) = (\Phi_1,\Psi_1)*(\Phi_2,\Psi_2)$. By definition, we have that $\Psi = \Psi_1*\Psi_2$, thus by \cref{thm:ShuffleConv}, $\Psi$ is the character extending the monotone convolution $\nu_1 \blacktriangleright \nu_2$. Now, we will prove that $\Phi$ satisfies the extension of \cref{def:cmonconv} to the case of characters, i.e.~$\Phi$ is the first component of 
$$
	 (\Phi,\Psi_0):=(\Phi_1,\epsilon)\boxplus (\Phi_2,\Psi_2),
$$
where the counit map $\epsilon$ is the character on $T(T_+(\A))$ extending the distribution of the random variable $0\in \A$. Indeed, first notice that $\epsilon = \E_\prec({\bf{0}})$, where ${\bf{0}}$ stands for the zero linear form on $T(T_+(\A))$. Then, if we write $\Psi_2 = \E_\prec(\kappa_2')$, by \eqref{eq:psiconv} we have that 
$$
	\Psi_0 = \E_\prec({\bf{0}} + \kappa_2') = \E_\prec(\kappa_2') = \Psi_2.
$$
On the other hand, the c-free cumulants of $(\Phi_1,\epsilon)$ and $(\Phi_2,\Psi_2)$ are given by 
\begin{equation}
	\mathsf{K}_1 
	= \epsilon\succ \beta_1 \prec \epsilon^{*-1} 
	= \beta_1,\qquad \mathsf{K}_2= \Psi_2\succ \beta_2\prec \Psi_2^{*-1}
\end{equation}
respectively, where we write $\Phi_i = \E_\succ(\beta_i)$ for $i=1,2$. Thus, from \eqref{eq:phiconv} we obtain that
\begin{eqnarray*}
	\Phi 
	&=& \E_\succ\big(\Psi_2^{*-1} \succ( \mathsf{K}_1+ \mathsf{K}_2)\prec \Psi_2 \big) \\ 
	&=&  \E_\succ\big(\Psi_2^{*-1} \succ \beta_1 \prec \Psi_2 
		+ \Psi_2^{*-1} \succ (\Psi_2\succ \beta_2\prec \Psi_2^{*-1}) \prec \Psi_2 \big) \\ 
	&=& \E_\succ\big(\Psi_2^{*-1} \succ \beta_1 \prec \Psi_2 
		+  (\Psi_2^{*-1} *\Psi_2) \succ \beta_2\prec (\Psi_2^{*-1}* \Psi_2) \big) \\ 
	&=& \E_\succ\big( \Psi_2^{*-1}\succ \beta_1\prec \Psi_2 
		+ \epsilon\succ\beta_2 \prec\epsilon\big)\\ 
	&=& \E_\succ\big( \Psi_2^{*-1}\succ \beta_1\prec \Psi_2 + \beta_2 \big),
\end{eqnarray*}
where we use the shuffle identities \eqref{eq:shuffle} in the third equality. We finish by noticing that the right-hand side on the last equality is precisely the definition of $\Phi$ in \cref{def:cmonconvC}.
\end{proof}

It has been shown in \cite{Has} that the c-monotone additive convolution extends the monotone and Boolean convolution. More precisely, for any distributions $\mu$ and $\nu$, and if $\delta_0$ stands for the distribution of $0\in \A$, we have that 
\begin{eqnarray*}
    (\mu,\mu)\blacktriangleright (\nu,\nu) 
    &=& (\mu\blacktriangleright \nu, \mu\blacktriangleright \nu),\\
    (\mu,\delta_0)\blacktriangleright (\nu,\delta_0) 
    &=& (\mu\uplus\nu,\delta_0).
\end{eqnarray*}
It is also shown in \cite{Has} that the c-monotone additive convolution extends the so-called \textit{orthogonal convolution} of Lenczewski, denoted by $\mu\vdash\nu$ in \cite{Len}.  One of the properties of the orthogonal convolution is that it decomposes the monotone convolution in terms of the Boolean convolution \cite[Cor.~6.6]{Len}:
\begin{equation}
	\label{eq:orthogonalproperty}
	\mu\blacktriangleright \nu = (\mu\vdash \nu)\uplus\nu.
\end{equation}
In terms of c-monotone additive convolution, the results of \cite{Has} show that
\begin{equation*}  
	(\mu,\lambda)\blacktriangleright (\delta_0,\nu) 
 	= (\mu\vdash \nu, \lambda \blacktriangleright\nu),\quad\mbox{ for any distributions $\mu,\nu,\lambda$}.
\end{equation*}

The definition of the Boolean additive convolution in \eqref{eq:ShuffleConv} and \eqref{eq:BCH} suggests a decomposition of the convolution product in terms of a sum of two Boolean cumulant forms. This implies the manner in which orthogonal convolution is encompassed in the shuffle-algebraic approach to non-commutative probability.
  

\begin{prop}
\label{prop:orthogonalShuffle}
Let $(\A,\varphi)$ be a non-commutative probability space and consider two distributions $\mu$ and $\nu$, with respective extensions $\Phi$ and $\Psi$ to two characters on $T(T_+(\A))$. Also, assume that $\Phi = \E_\succ(\beta)$. Then the linear form $\Phi\vdash\Psi$ given by
\begin{equation}
\label{eq:orthogonal}
    \Phi \vdash \Psi = \E_\succ ( \Psi^{*-1} \succ \beta \prec \Psi) 
\end{equation}
is the character on $T(T_+(\A))$ extending the orthogonal convolution $\mu\vdash\nu$. Moreover, if we assume that $\Psi = \exp^*(\rho')$, then
\begin{equation}
\label{eq:orthogonal2}
\Phi \vdash \Psi = \E_\succ(e^{r_{\lhd \rho'}}(\beta)).
\end{equation}
\end{prop}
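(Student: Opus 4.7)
The proposition breaks into two independent claims, which I tackle in order.

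For the first claim, my plan is to exploit Lenczewski's decomposition \eqref{eq:orthogonalproperty}, $\mu\blacktriangleright\nu = (\mu\vdash\nu)\uplus\nu$, and translate it into the shuffle picture using the identifications in \cref{thm:ShuffleConv}. Writing $\Phi\vdash\Psi = \E_\succ(\gamma)$ for a (so far unknown) infinitesimal character $\gamma\in\mathfrak{g}$, which is possible by \cref{thm:mainEFP}, and letting $\beta' = \mathcal{L}_\succ(\Psi)$, the shuffle description of Boolean convolution in \eqref{eq:ShuffleConv} gives
\[
 \Phi*\Psi \;=\; (\Phi\vdash\Psi)\uplus\Psi \;=\; \E_\succ(\gamma + \beta').
\]
On the other hand, the Baker--Campbell--Hausdorff type formula \eqref{eq:BCH} for monotone convolution yields
\[
 \Phi*\Psi \;=\; \E_\succ(\beta\#\beta') \;=\; \E_\succ\bigl(\beta' + \Psi^{*-1}\succ\beta\prec\Psi\bigr).
\]
Comparing the two expressions and using that $\E_\succ:\mathfrak{g}\to G$ is a bijection (\cref{thm:mainEFP}) forces $\gamma = \Psi^{*-1}\succ\beta\prec\Psi$, which is \eqref{eq:orthogonal}.

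For the second claim, I would prove $\Psi^{*-1}\succ\beta\prec\Psi = e^{r_{\lhd\rho'}}(\beta)$ by deforming along a one-parameter family. Set $\Psi_t := \exp^*(t\rho')$ and
\[
 f(t) \;:=\; \Psi_t^{*-1}\succ\beta\prec\Psi_t.
\]
On each graded component of $T(T_+(\A))$ the map $f$ restricts to a polynomial in $t$, so formal differentiation in $t$ is well-defined on each graded piece. Using $\tfrac{d}{dt}\Psi_t = \rho'*\Psi_t$ and $\tfrac{d}{dt}\Psi_t^{*-1} = -\rho'*\Psi_t^{*-1}$ (both valid because $\rho'$ commutes with $\exp^*(t\rho')$ under $*$), bilinearity of $\succ$ and $\prec$, and the shuffle axioms \eqref{eq:shuffle}, a short manipulation gives
\[
 \dot f(t) \;=\; -(\rho'*\Psi_t^{*-1})\succ\beta\prec\Psi_t \;+\; \Psi_t^{*-1}\succ\beta\prec(\Psi_t*\rho') \;=\; -\rho'\succ f(t) + f(t)\prec\rho' \;=\; f(t)\lhd \rho' \;=\; r_{\lhd\rho'}(f(t)).
\]
The linear ODE $\dot f = r_{\lhd\rho'}(f)$ with initial condition $f(0)=\beta$ has unique solution $f(t) = e^{tr_{\lhd\rho'}}(\beta)$ in the notation of \cref{not:exp}. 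Specializing to $t=1$ yields \eqref{eq:orthogonal2}.

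The chief technical point is the derivative computation in the second claim: one must apply the third shuffle axiom $(f*g)\succ h = f\succ(g\succ h)$ together with its mirror $f\prec(g*h) = (f\prec g)\prec h$ and the middle axiom to collapse $(\rho'*\Psi_t^{*-1})\succ\beta\prec\Psi_t$ to $\rho'\succ f(t)$ and $\Psi_t^{*-1}\succ\beta\prec(\Psi_t*\rho')$ to $f(t)\prec\rho'$, respectively. The first claim is, by contrast, a short purely algebraic consequence of \eqref{eq:BCH}, \eqref{eq:ShuffleConv}, and \cref{thm:mainEFP}, once Lenczewski's decomposition \eqref{eq:orthogonalproperty} is in hand.
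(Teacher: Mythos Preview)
Your proof is correct. For the first claim \eqref{eq:orthogonal} you follow precisely the route the paper indicates: combine Lenczewski's decomposition \eqref{eq:orthogonalproperty} with \eqref{eq:BCH} and \cref{thm:ShuffleConv}, then use injectivity of $\E_\succ$ from \cref{thm:mainEFP} to read off the Boolean cumulant of $\Phi\vdash\Psi$. The paper states this in one sentence; you simply spell it out.

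For the second claim \eqref{eq:orthogonal2} your argument is correct but genuinely different from the paper's. The paper argues purely algebraically: it writes the pre-Lie right-multiplication as a difference of commuting half-shuffle multiplication operators, $r_{\lhd\rho'} = r_{\prec\rho'} - \ell_{\rho'\succ}$, uses the shuffle axioms to show $r^{(n)}_{\prec\rho'}(\beta)=\beta\prec(\rho')^{*n}$ and $\ell^{(n)}_{\rho'\succ}(\beta)=(\rho')^{*n}\succ\beta$, and then factors the operator exponential as $e^{r_{\lhd\rho'}} = e^{r_{\prec\rho'}}e^{-\ell_{\rho'\succ}}$, which evaluates directly to $\exp^*(-\rho')\succ\,\cdot\,\prec\exp^*(\rho') = \Psi^{*-1}\succ\,\cdot\,\prec\Psi$. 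Your one-parameter deformation $f(t)=\Psi_t^{*-1}\succ\beta\prec\Psi_t$ and the computation $\dot f = r_{\lhd\rho'}(f)$ reach the same conclusion via the flow of $r_{\lhd\rho'}$. The paper's approach is slightly more self-contained (no formal calculus to justify), while yours makes transparent the dynamical picture that the shuffle adjoint action is the time-one flow of pre-Lie right-multiplication, which is a nice conceptual complement.
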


\begin{proof}
The proof of \eqref{eq:orthogonal} follows from \eqref{eq:BCH} together with \eqref{eq:orthogonalproperty} and \cref{thm:ShuffleConv}. Regarding \eqref{eq:orthogonal2}, define the half-shuffle right- and left-multiplication operators $r_{\prec \rho'}(\beta) := \beta\prec\rho'$ and $\ell_{\rho'\succ }(\beta) := \rho'\succ\beta$, as well as the corresponding iterations $r_{\prec \rho'}^{(n)}(\beta)$ and $\ell_{\rho'\succ }^{(n)}(\beta)$, for $n\geq0$. The shuffle identities imply that for any $n\geq 1$
$$
	r^{(n)}_{\prec \rho'} (\beta) 
	= \beta\prec (\rho')^{*n},\qquad \ell_{\rho'\succ}^{(n)}(\beta) 
	= (\rho')^{*n}\succ\beta.
$$
Furthermore, using the shuffle identities \eqref{eq:shuffle}, we have that the operators $r_{\prec \rho'}^{(n)}$ and $\ell_{\rho'\succ }^{(n)}$ commute, for any $n\geq1$. This implies that the operators $e^{r_{\prec \rho'}}$ and $e^{\ell_{-\rho'\succ}}$, defined in a similar fashion as in \cref{not:exp}, commute. Hence, since the pre-Lie product $\beta \lhd \rho'= r_{\lhd \rho'}(\beta) = (r_{\prec \rho'} - \ell_{\rho'\succ })(\beta)$, we obtain
\begin{eqnarray*}
	e^{r_{\lhd \rho'}}(\beta) &=& e^{r_{\prec \rho'}}e^{\ell_{-\rho'\succ}}(\beta)\\ 
	&=& \exp^{*}(\rho')\succ\beta\prec \exp^{*}(\rho')\\ 
	&=& \Psi^{*-1}\succ\beta\prec \Psi\\
	&=& \mathcal{L}_\succ(\Phi \vdash \Psi).
\end{eqnarray*}
\end{proof}

\begin{ex} 
Let $\Phi,\Psi$ and $\Lambda$ be characters on the double tensor Hopf algebra $T(T_+(\A))$. Using \cref{def:cmonconvC}, we can directly deduce Hasebe's particular cases of c-monotone additive convolutions in the shuffle-algebraic picture.

        i) Consider the case $(\tilde\Phi,\tilde\Psi) := (\Phi,\Phi)* (\Psi,\Psi)$.  By definition, it is clear that $\tilde\Psi = \Phi*\Psi$. 
        Moreover, if $\Phi = \E_\succ(\beta)$ and $\Psi = \E_\succ(\beta')$, using \eqref{eq:BCH} we obtain
        \begin{equation*}
        		\Phi*\Psi = \E_\succ(\beta\#\beta') 
		= \E_\succ(\beta' + \Psi^{*-1} \succ \beta\prec \Psi) 
		= \tilde\Phi,
        \end{equation*}
        where in the last equality, we used the definition of the first component of the c-monotone additive convolution.  We then conclude that
        $$
        		(\Phi,\Phi)* (\Psi,\Psi) = (\Phi*\Psi,\Phi*\Psi)
	$$
	is the monotone additive convolution.
        
        \par ii) Now consider $(\tilde\Phi,\tilde\Psi) := (\Phi,\epsilon)* (\Psi,\epsilon)$. 
        It readily follows that $\tilde\Psi = \epsilon*\epsilon = \epsilon$. In addition, 
        if $\Phi = \E_\succ(\beta)$ and $\Psi = \E_\succ(\beta')$, then
        $$
        		\tilde\Phi 
		= \E_\succ( \beta' + \epsilon^{*-1}\succ\beta \prec\epsilon) 
		= \E_\succ(\beta + \beta').
	$$
        In other words, we have that $\tilde\Phi$ is the Boolean additive convolution $\Phi \uplus \Psi$, and hence 
        $$
        		(\Phi,\epsilon)* (\Psi,\epsilon) = (\Phi\uplus\Psi,\epsilon).
        $$
        \par iii) Finally, consider the case $(\tilde\Phi,\tilde\Psi) := (\Phi,\Lambda)* (\epsilon,\Psi)$. 
        Clearly $\tilde\Psi = \Lambda *\Psi$ and, if $\Phi = \E_\succ(\beta)$, we have that 
        $$
        		\tilde\Phi = \E_\succ(\Psi^{*-1}\succ \beta\prec\Psi),
	$$
	since the $\epsilon = \E_\succ({\bf{0}})$. We note that $\tilde\Phi$ is precisely the orthogonal convolution 
	of $\Phi$ and $\Psi$ in \eqref{eq:orthogonal}. Thus
        $$
        		(\Phi,\Lambda) * (\epsilon,\Psi) = (\Phi\vdash \Psi, \Lambda *\Psi).
	$$
\end{ex}

While \cref{prop:cmonconvC} suggests that existing findings regarding c-monotone convolution can be expressed in terms of Hopf algebra characters, it is important to highlight that these outcomes can be directly derived from the shuffle representation. To illustrate, consider the power additivity of c-monotone cumulants as a case in point.


\begin{prop}[{\cite[Thm.~4.4]{Has}}]
Let $\Phi$ and $\Psi$ be two characters on $T(T_+(\A))$ and $\mathsf{P} := \mathsf{P}(\Phi,\Psi)$ be the c-monotone cumulant infinitesimal character of $(\Phi,\Psi)$. Then for any $N\geq1$, we have that $\mathsf{P}\left( (\Phi,\Psi)^{* N})\right) = N\mathsf{P}(\Phi,\Psi)$.
\end{prop}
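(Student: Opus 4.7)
The plan is to derive a closed form for the Boolean cumulant form of the first component of $(\Phi,\Psi)^{*N}$ by iterating \cref{def:cmonconvC}, rewrite it via the operator identity behind \cref{prop:orthogonalShuffle}, and use the bijectivity of the extended Magnus-type map from \cref{eq:MagnusExt} to conclude. Set $\Psi = \exp^*(\rho')$ and $\Phi = \E_\succ(\beta)$, so by \cref{def:cmoninfchar} we have $W_{\rho'}(\mathsf{P}) = \beta$.

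First I would show by induction on $N \geq 1$, starting from \cref{def:cmonconvC} and repeatedly applying the shuffle identities \eqref{eq:shuffle} (which collapse nested mixed products via $\Psi^{*-1}\succ(\Psi^{*-k}\succ\beta\prec\Psi^{*k})\prec\Psi = \Psi^{*-(k+1)}\succ\beta\prec\Psi^{*(k+1)}$), that $(\Phi_N,\Psi_N) := (\Phi,\Psi)^{*N}$ satisfies
$$
\Psi_N = \Psi^{*N} = \exp^*(N\rho'), \qquad \Phi_N = \E_\succ(\beta_N), \qquad \beta_N = \sum_{k=0}^{N-1}\Psi^{*-k}\succ\beta\prec\Psi^{*k}.
$$
In particular, the monotone cumulant infinitesimal character associated with $\Psi_N$ equals $\rho'_N = N\rho'$.

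Next, using the identity $\Psi^{*-1}\succ\beta\prec\Psi = e^{r_{\lhd\rho'}}(\beta)$ established in the proof of \cref{prop:orthogonalShuffle}, and replacing $\rho'$ by $k\rho'$ together with $r_{\lhd k\rho'} = k\, r_{\lhd\rho'}$, one gets $\Psi^{*-k}\succ\beta\prec\Psi^{*k} = e^{k\, r_{\lhd\rho'}}(\beta)$. Substituting $\beta = W_{\rho'}(\mathsf{P})$ in the operator form of \cref{not:exp}, the sum telescopes:
$$
\beta_N = \sum_{k=0}^{N-1} e^{k\, r_{\lhd\rho'}}(\beta) = \frac{1}{r_{\lhd\rho'}}\sum_{k=0}^{N-1}\bigl(e^{(k+1)r_{\lhd\rho'}} - e^{k\, r_{\lhd\rho'}}\bigr)(\mathsf{P}) = \frac{e^{N r_{\lhd\rho'}} - \id}{r_{\lhd\rho'}}(\mathsf{P}).
$$
On the other hand, linearity gives
$$
W_{N\rho'}(N\mathsf{P}) = \frac{e^{N r_{\lhd\rho'}} - \id}{N r_{\lhd\rho'}}(N\mathsf{P}) = \frac{e^{N r_{\lhd\rho'}} - \id}{r_{\lhd\rho'}}(\mathsf{P}) = \beta_N.
$$

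Finally, $\mathsf{P}_N := \mathsf{P}((\Phi,\Psi)^{*N})$ is characterized by \cref{def:cmoninfchar} as the unique infinitesimal character with $W_{\rho'_N}(\mathsf{P}_N) = \beta_N$, and $W_{N\rho'}$ is bijective with compositional inverse $\Omega_{N\rho'}$ by \cref{eq:MagnusExt}. Hence $\mathsf{P}_N = N\mathsf{P}$, as claimed. The main obstacle I anticipate lies in the inductive computation producing the explicit formula for $\beta_N$: one has to apply the shuffle identities \eqref{eq:shuffle} in the correct order to merge the nested $\succ$- and $\prec$-products with $\Psi^{*-1}$ and $\Psi$ against the running sum; once this closed form is in hand, the rest is a routine telescoping identity in the commuting algebra of iterated right-multiplication operators exploited in the proof of \cref{prop:orthogonalShuffle}.
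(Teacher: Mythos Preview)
Your proof is correct and follows essentially the same strategy as the paper's: express the shuffle adjoint action $\Psi^{*-k}\succ\beta\prec\Psi^{*k}$ as $e^{k\,r_{\lhd\rho'}}(\beta)$ via \cref{prop:orthogonalShuffle}, substitute $\beta = W_{\rho'}(\mathsf{P})$ in the operator form of \cref{not:exp}, and match the result against $W_{N\rho'}(N\mathsf{P})$ using $r_{\lhd N\rho'} = N\,r_{\lhd\rho'}$ together with the bijectivity from \cref{eq:MagnusExt}. The only cosmetic difference is that the paper spells out the case $N=2$ and defers the general statement to an induction, whereas you carry out the computation directly for general $N$ by first deriving the closed form $\beta_N = \sum_{k=0}^{N-1}\Psi^{*-k}\succ\beta\prec\Psi^{*k}$ and then telescoping; the underlying operator identity $(e^{r}+\id)(e^{r}-\id)/r = (e^{2r}-\id)/r$ in the paper is simply the $N=2$ instance of your telescoping sum.
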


\begin{proof}
We prove the case $N=2$. The general case easily follows from induction. 
    \par Recall that, by definition, the second component of $(\Phi,\Psi)^{* 2}$ is given by $\Psi^{*2}$. Also, if $\rho'$ is the monotone cumulant of $\Psi$, i.e.~$\Psi = \exp^*(\rho')$, then monotone cumulant of the product $\Psi^{* 2}$ is $2\rho'$. With this and the definition of the c-monotone cumulant of $(\Phi,\Psi)^{* 2}$, we want to prove that the Boolean cumulant character of the first component of $(\Phi,\Psi)^{* 2}$ is given by $W_{2\rho'}(2\mathsf{P})$.
    \par We know that by the definition of c-monotone convolution, the Boolean cumulant of the first component of $(\Phi,\Psi)^{* 2}$ is given by $\beta + \Psi^{*-1}\succ \beta \prec \Psi$, where $\beta$ is the Boolean cumulant of $\Phi$, i.e.~$\Phi = \E_\succ(\beta)$. Moreover, the definition of $\mathsf{P}$ implies that $\beta = W_{\rho'}(\mathsf{P})$. Then, recalling that
$$
	\Psi^{*-1}\succ \beta \prec \Psi 
	= e^{r_{\lhd \rho'}} (\beta)
$$
from the proof of \cref{prop:orthogonalShuffle} and the identity $W_{\rho'}(\mathsf{P}) = \frac{e^{r_{\lhd \rho'}} -\id}{r_{\lhd \rho'}}(\mathsf{P})$ from \cref{not:exp}, we have that
\begin{eqnarray*}
     \beta + \Psi^{*-1}\succ \beta \prec \Psi
     &=& \beta + e^{r_{\lhd \rho'}}(\beta) \\ 
     &=& (e^{r_{\lhd \rho'}}+\id )(\beta)\\ 
     &=& (e^{r_{\lhd \rho'}}+\id )(W_{\rho'}(\mathsf{P}))\\ 
     &=& (e^{r_{\lhd \rho'}}+\id )\left( \frac{e^{r_{\lhd \rho'}}-\id}{r_{\lhd \rho'}}\right)(\mathsf{P})\\
     &=& 2 \frac{e^{2r_{\lhd \rho'}}-\id}{2r_{\lhd \rho'}}(\mathsf{P}).
\end{eqnarray*}
Notice that $2r_{\lhd \rho'}(\alpha) = 2\alpha \lhd \rho'= \alpha \lhd (2\rho') = r_{\lhd 2\rho'}(\alpha)$ for any $\alpha\in\mathfrak{g}$, and thus $2r_{\lhd \rho'} = r_{\lhd 2\rho'}$. Hence
$$
	\beta + \Psi^{*-1}\succ \beta \prec \Psi 
	=  2 \frac{e^{r_{\lhd 2\rho'}}-\id}{r_{\lhd 2\rho'}}(\mathsf{P}) 
	= 2W_{2\rho'}(\mathsf{P}) 
	= W_{2\rho'}(2\mathsf{P}),
$$
which concludes the proof for $N=2$.
\end{proof}


\section{A relation between conditional cumulants}
\label{sec:relation}

In this section, we describe a combinatorial formula that relates c-free cumulants and c-monotone cumulants. It is obtained by relating the corresponding infinitesimal characters in the shuffle-algebraic picture for non-commutative probability.

\begin{prop}
\label{prop:relation}
Let $(\Phi,\Psi)$ be a pair of characters on the double tensor Hopf algebra $T(T_+(\A))$ such that $\Phi = \E_\succ(\beta)$ and $\Psi = \exp^*(\rho')$. In addition, consider the c-monotone cumulant form $\mathsf{P}:=\mathsf{P}(\Phi,\Psi)$ and the c-free cumulant form $\mathsf{K}:=\mathsf{K}(\Phi,\Psi)$. Then we have that
$$
	\mathsf{K} = W_{-\rho'}(\mathsf{P}).
$$
\end{prop}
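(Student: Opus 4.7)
The plan is to translate both sides of the claimed identity into the same operator expression applied to $\mathsf{P}$, by systematically transporting the shuffle-adjoint action into the pre-Lie exponential operators used in the definition of $W_{\rho'}$.

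\textbf{Step 1: Rewrite the shuffle-adjoint action on $\beta$ as a pre-Lie exponential.} Starting from the definition $\mathsf{K} = \Psi\succ \beta\prec \Psi^{*-1}$ in \eqref{eq:CFreeChar}, I would mimic the operator computation used inside the proof of \cref{prop:orthogonalShuffle}. Since $\Psi=\exp^*(\rho')$, the commuting half-shuffle multiplication operators $r_{\prec\rho'}$ and $\ell_{\rho'\succ}$ satisfy $e^{r_{\prec(-\rho')}}(\beta)=\beta\prec\Psi^{*-1}$ and $e^{\ell_{\rho'\succ}}(\beta)=\Psi\succ\beta$. Combining these and using $r_{\lhd\rho'}=r_{\prec\rho'}-\ell_{\rho'\succ}$ (so $-r_{\lhd\rho'}=r_{\prec(-\rho')}+\ell_{\rho'\succ}$ and $r_{\lhd(-\rho')}=-r_{\lhd\rho'}$), the same argument that produced $\Psi^{*-1}\succ\beta\prec\Psi=e^{r_{\lhd\rho'}}(\beta)$ yields, by swapping the roles of $\Psi$ and $\Psi^{*-1}$,
\begin{equation*}
\mathsf{K}\;=\;\Psi\succ \beta\prec \Psi^{*-1}\;=\;e^{r_{\lhd(-\rho')}}(\beta)\;=\;e^{-r_{\lhd\rho'}}(\beta).
\end{equation*}

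\textbf{Step 2: Substitute the defining relation for $\mathsf{P}$.} The infinitesimal character $\mathsf{P}$ is defined by $\beta=W_{\rho'}(\mathsf{P})$, and in the exponential notation of \cref{not:exp} this reads $\beta=\bigl(\tfrac{e^{r_{\lhd\rho'}}-\id}{r_{\lhd\rho'}}\bigr)(\mathsf{P})$. Plugging this into Step 1 gives
\begin{equation*}
\mathsf{K}\;=\;e^{-r_{\lhd\rho'}}\!\circ\!\left(\frac{e^{r_{\lhd\rho'}}-\id}{r_{\lhd\rho'}}\right)(\mathsf{P})\;=\;\left(\frac{\id-e^{-r_{\lhd\rho'}}}{r_{\lhd\rho'}}\right)(\mathsf{P}).
\end{equation*}
These operator manipulations are legitimate because all operators appearing are power series in the single operator $r_{\lhd\rho'}$ (equivalently, the operators $r_{\prec\rho'}$ and $\ell_{\rho'\succ}$ commute, as already exploited in \cref{prop:orthogonalShuffle}), so they generate a commutative subalgebra in which the formal identity $e^{-x}\cdot\tfrac{e^x-1}{x}=\tfrac{1-e^{-x}}{x}$ holds.

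\textbf{Step 3: Identify the resulting operator with $W_{-\rho'}$.} Because $r_{\lhd(-\rho')}=-r_{\lhd\rho'}$ (bilinearity of the pre-Lie product), the defining series for $W_{-\rho'}$ gives
\begin{equation*}
W_{-\rho'}(\mathsf{P})\;=\;\sum_{n\geq0}\frac{1}{(n+1)!}r^{(n)}_{\lhd(-\rho')}(\mathsf{P})\;=\;\left(\frac{e^{-r_{\lhd\rho'}}-\id}{-r_{\lhd\rho'}}\right)(\mathsf{P})\;=\;\left(\frac{\id-e^{-r_{\lhd\rho'}}}{r_{\lhd\rho'}}\right)(\mathsf{P}),
\end{equation*}
which coincides with the expression obtained in Step 2. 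Hence $\mathsf{K}=W_{-\rho'}(\mathsf{P})$.

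\textbf{Expected obstacle.} The only delicate point is Step 1, where the shuffle-adjoint action has to be rewritten as a pre-Lie exponential with $\Psi$ and $\Psi^{*-1}$ in swapped positions. This is essentially a bookkeeping exercise that reproduces, with a sign, the chain of identities already carried out for \eqref{eq:orthogonal2}; once it is in place, the remainder of the argument is a purely formal manipulation of commuting operators.
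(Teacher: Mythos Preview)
Your proposal is correct and follows essentially the same argument as the paper: both start from $\mathsf{K}=\Psi\succ\beta\prec\Psi^{*-1}$, invoke the operator identity $\Psi\succ\beta\prec\Psi^{*-1}=e^{r_{\lhd(-\rho')}}(\beta)$ from the proof of \cref{prop:orthogonalShuffle}, substitute $\beta=W_{\rho'}(\mathsf{P})$ in its exponential form, and then simplify using $r_{\lhd(-\rho')}=-r_{\lhd\rho'}$ to recognize $W_{-\rho'}(\mathsf{P})$. The paper's version is slightly terser but the logic and the ingredients are identical.
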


\begin{proof}
    Recall that $\mathsf{K} = \Psi\succ \beta \prec \Psi^{*-1}$ and $\beta = W_{\rho'}(\mathsf{P})$. From the proof of \cref{prop:orthogonalShuffle}, we know that 
 $$
    \Psi\succ \beta \prec \Psi^{*-1} = \exp^*(\rho')\succ\beta\prec \exp^*(-\rho') = e^{r_{\lhd -\rho'}}(\beta).
$$ 
Hence
    \begin{eqnarray*}
        \mathsf{K} 
        		&=&  \Psi\succ \beta \prec \Psi^{*-1}\\ 
		&=& e^{r_{\lhd -\rho'}}\left(W_{\rho'}(\mathsf{P}) \right)\\ 
		&=& e^{r_{\lhd -\rho'}}\left(\frac{e^{r_{\lhd \rho'}}-\id}{r_{\lhd \rho'}} \right)(\mathsf{P}).
    \end{eqnarray*}
    Noticing that $r_{\lhd -\rho'} = -r_{\lhd \rho'}$ implies that $e^{r_{\lhd -\rho'}} = e^{-r_{\lhd \rho'}}$, we get that
    \begin{eqnarray*}
        \mathsf{K}  
        &=& \left( \frac{e^{r_{\lhd -\rho'}}-\id}{-r_{\lhd \rho'}} \right) (\mathsf{P})\\ 
        &=&  \left( \frac{e^{r_{\lhd -\rho'}}-\id}{r_{\lhd -\rho'}} \right) (\mathsf{P})\\ 
        &=& W_{-\rho'}(\mathsf{P}),
    \end{eqnarray*}
    as we wanted to show.
\end{proof}

Applying \cref{lem:Wrho} together with the above proposition provides a new combinatorial formula relating c-free and c-monotone cumulants.

\begin{theo}
\label{cor:relation}
Let $(\A,\varphi,\psi)$ be a conditionally non-commutative probability space. If \linebreak $\{h_n^{(c)}: \A^n \to \mathbb{C}\}_{n \geq1}$ and $\{k_n^{(c)}: \A^n \to \mathbb{C}\}_{n \geq 1}$ are the c-monotone and c-free cumulants of $(\A,\varphi,\psi)$, respectively, and $\{h'_n: \A^n \to \mathbb{C}\}_{n \geq1}$ are the monotone cumulants of $(\A,\psi)$, then we have that
\begin{equation}
	k_n^{(c)}(a_1,\ldots,a_n) 
	= \sum_{\substack{\pi\in \NC^{\mathrm{irr}}(n)\\ 1\in \pi_1}} \frac{(-1)^{|\pi|-1}}{t(\pi)!} h_{|\pi_1|}^{(c)}
	(a_1,\ldots,a_n|\pi_1) \prod_{\substack{\pi_j \in \pi \\ j \neq 1}} h'_{|\pi_j|}(a_1,\ldots,a_n | \pi_j),
\end{equation}
for any $n\geq1$ and $a_1,\ldots,a_n\in \A$.
\end{theo}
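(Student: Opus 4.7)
The plan is to combine \cref{prop:relation} with \cref{lem:Wrho}, and then pass from a sum over monotone irreducible non-crossing partitions to a sum over irreducible non-crossing partitions. First, I would apply \cref{prop:relation} to obtain $\mathsf{K} = W_{-\rho'}(\mathsf{P})$ and evaluate both sides on an arbitrary word $w = a_1\cdots a_n \in \A^{\otimes n}$. The left-hand side evaluates to $\mathsf{K}(w) = k_n^{(c)}(a_1,\ldots,a_n)$ by the identification of $\mathsf{K}$ with the c-free cumulant infinitesimal character.

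Second, I would apply the formula in \cref{lem:Wrho}, after noting that its proof uses only the fact that $\rho'$ is an element of $\mathfrak{g}$, so it remains valid when $\rho'$ is replaced by any infinitesimal character, in particular by $-\rho'$. This yields
$$
W_{-\rho'}(\mathsf{P})(w)
= \sum_{\substack{\pi\in \mathrm{M}^{\scriptscriptstyle\mathrm{irr}}(n)\\ 1\in \pi_1}}
\frac{1}{|\pi|!}\,\mathsf{P}(w_{\pi_1})\!\!
\prod_{\substack{\pi_j\in\pi\\ j\neq 1}} \!\!(-\rho')(w_{\pi_j}).
$$
Pulling out the sign from each of the $|\pi|-1$ inner blocks produces a global factor $(-1)^{|\pi|-1}$.

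Third, I would convert the sum over $\mathrm{M}^{\scriptscriptstyle\mathrm{irr}}(n)$ into a sum over $\mathrm{NC}^{\scriptscriptstyle\mathrm{irr}}(n)$ using the counting identity $m(\pi) = |\pi|!/t(\pi)!$ recalled in \cref{ssec:prelim}: each underlying non-crossing partition $\pi$ carries exactly $m(\pi)$ compatible monotone labellings, and the integrand depends only on $\pi$ (not on the labelling), so $\sum_{\mathrm{M}^{\scriptscriptstyle\mathrm{irr}}} \tfrac{1}{|\pi|!} = \sum_{\mathrm{NC}^{\scriptscriptstyle\mathrm{irr}}} \tfrac{1}{t(\pi)!}$. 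Finally, invoking \cref{prop:46P} to rewrite $\mathsf{P}(w_{\pi_1}) = h^{(c)}_{|\pi_1|}(a_1,\ldots,a_n|\pi_1)$ and \cref{thm:linkNCP} applied to $(\A,\psi)$ to rewrite $\rho'(w_{\pi_j}) = h'_{|\pi_j|}(a_1,\ldots,a_n|\pi_j)$ delivers the claimed identity.

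I do not anticipate a serious obstacle: the argument is a direct transcription of the operator identity $\mathsf{K} = W_{-\rho'}(\mathsf{P})$ at the combinatorial level, in parallel with the derivation of \eqref{eq:auxMB22} from $\kappa = -W(-\rho)$. The only points requiring care are (i) checking that the sign bookkeeping gives exactly $(-1)^{|\pi|-1}$ (one sign per inner block) and (ii) justifying the passage from monotone to non-monotone indexing via the tree-factorial identity; both are routine once Lemmas~\ref{lem:iterated} and~\ref{lem:Wrho} are in hand.
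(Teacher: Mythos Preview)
Your proposal is correct and follows precisely the route the paper indicates: the theorem is stated immediately after \cref{prop:relation} with the remark that applying \cref{lem:Wrho} to the identity $\mathsf{K}=W_{-\rho'}(\mathsf{P})$ yields the formula, and you have simply spelled out the details (substituting $-\rho'$, extracting the sign $(-1)^{|\pi|-1}$, and reindexing via $m(\pi)=|\pi|!/t(\pi)!$) that the paper leaves implicit.
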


Recalling that the operator $W_{-\rho'}$ is invertible with respect to composition, we can obtain the analogous formula of \eqref{eq:auxMB2} for the c-monotone case.

\begin{prop}
Let $(\A,\varphi,\psi)$ be a conditionally non-commutative probability space. If \linebreak $\{h_n^{(c)}: \A^n \to \mathbb{C}\}_{n \geq1}$ and $\{k_n^{(c)}: \A^n \to \mathbb{C}\}_{n \geq 1}$ are the c-monotone and c-free cumulants of $(\A,\varphi,\psi)$, respectively, and $\{k'_n: \A^n \to \mathbb{C}\}_{n \geq1}$ are the free cumulants of $(\A,\psi)$, then we have that
\begin{equation}
	h_n^{(c)}(a_1,\ldots,a_n) 
	= \sum_{\substack{\pi\in \NC^{\mathrm{irr}}(n)\\ 1\in \pi_1}} (-1)^{|\pi|-1}\omega(\pi) k_{|\pi_1|}^{(c)}
	(a_1,\ldots,a_n|\pi_1) \prod_{\substack{\pi_j \in \pi \\ j \neq 1}} k'_{|\pi_j|}(a_1,\ldots,a_n | \pi_j),
\end{equation}
for any $n\geq1$ and $a_1,\ldots,a_n\in \A$,  where $\omega(\pi)$ is the coefficient given in \eqref{eq:auxMB2}. 
\end{prop}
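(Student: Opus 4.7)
The plan is to invert the shuffle-algebraic identity of Proposition~\ref{prop:relation} and then rewrite the resulting expansion, which naturally involves monotone cumulants $\rho'$ of $\psi$, in terms of the free cumulants $\kappa'$ of $\psi$ using the non-conditional formula \eqref{eq:auxMB2}. First, by \Cref{eq:MagnusExt} the operator $W_{-\rho'}$ is invertible with inverse $\Omega_{-\rho'}$, so applying this inverse to $\mathsf{K} = W_{-\rho'}(\mathsf{P})$ gives
\[
\mathsf{P} \;=\; \Omega_{-\rho'}(\mathsf{K}) \;=\; \sum_{n\geq 0}\frac{B_n}{n!}\,r^{(n)}_{\lhd -\rho'}(\mathsf{K}).
\]
Evaluating on a word $w = a_1\cdots a_n$ by means of \Cref{lem:iterated} and re-indexing from irreducible monotone partitions to irreducible non-crossing partitions via the identity $m(\pi) = |\pi|!/t(\pi)!$, one obtains an expansion of $\mathsf{P}(w)$ over $\pi \in \NCirr(n)$ with $1 \in \pi_1$, in which the outer block contributes the factor $\mathsf{K}(w_{\pi_1})$ and each inner block contributes $\rho'(w_{\pi_j})$.

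Next, replace each factor $\rho'(w_{\pi_j}) = h'_{|\pi_j|}(a_1,\ldots,a_n|\pi_j)$ by its expression in terms of free cumulants of $\psi$ provided by the non-conditional formula \eqref{eq:auxMB2},
\[
h'_{|\pi_j|}(\,\cdots | \pi_j) \;=\; \sum_{\sigma_j\in\NCirr(|\pi_j|)}(-1)^{|\sigma_j|-1}\omega(\sigma_j)\, k'_{\sigma_j}(\,\cdots | \pi_j).
\]
Each choice of $(\pi, (\sigma_j)_{j\neq 1})$ corresponds to a refined irreducible non-crossing partition $\tilde\pi$ on $[n]$, whose outer block equals $\pi_1$ and whose inner blocks are the blocks of the various $\sigma_j$. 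Reorganizing the resulting iterated sum by $\tilde\pi$ produces an expansion of $\mathsf{P}(w)$ of the desired shape, namely a sum over $\tilde\pi \in \NCirr(n)$ with $1 \in \tilde\pi_1$ of terms of the form (coefficient) $\cdot \mathsf{K}(w_{\tilde\pi_1}) \prod_{j\neq 1} \kappa'(w_{\tilde\pi_j})$.

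The remaining step is to identify the compound coefficient as $(-1)^{|\tilde\pi|-1}\omega(\tilde\pi)$. Collecting signs gives an overall $(-1)^{|\tilde\pi|-1}$ independently of the coarsening, so the task reduces to showing that the sum over coarsenings $\pi$ of $\tilde\pi$ (with $\pi_1 = \tilde\pi_1$) of the products of the Bernoulli-number coefficients arising from $\Omega_{-\rho'}$ with the $\omega(\sigma_j)$ coming from \eqref{eq:auxMB2} collapses to $\omega(\tilde\pi)$. This is exactly the combinatorial identity that underlies the derivation of \eqref{eq:auxMB2} in \cite{CEFPP}: at the pre-Lie-algebra level the roles of $\kappa$ at the outer and inner levels are now played, respectively, by $\mathsf{K}$ and $\kappa'$, while the underlying combinatorics of iterated pre-Lie products and linear extensions of nesting posets is unchanged. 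Translating the final expansion back using \Cref{thm:linkNCP}, \Cref{prop:46P}, and the identification $\kappa'(w_{\tilde\pi_j}) = k'_{|\tilde\pi_j|}(a_1,\ldots,a_n|\tilde\pi_j)$ yields the stated formula.

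The main obstacle is this last combinatorial verification: showing that, after substituting \eqref{eq:auxMB2} into the word-level expansion of $\Omega_{-\rho'}(\mathsf{K})$, the compound coefficient simplifies to $(-1)^{|\tilde\pi|-1}\omega(\tilde\pi)$. Once the structural parallel with the non-conditional derivation of \eqref{eq:auxMB2} is made explicit, the identity follows by the same pre-Lie algebraic manipulations, transferred to the setting where the outer block is indexed by $\mathsf{K}$ and the transport variable is $-\kappa'$.
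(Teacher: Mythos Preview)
Your proposal is correct and follows essentially the same route as the paper: invert $\mathsf{K}=W_{-\rho'}(\mathsf{P})$ to $\mathsf{P}=\Omega_{-\rho'}(\mathsf{K})$, expand via \Cref{lem:iterated}, substitute \eqref{eq:auxMB2} for each inner $\rho'$-factor, and then reduce the resulting coefficient identity to the combinatorics of \cite{CEFPP}. The paper is only marginally more explicit at the last step, naming the bijection between $(\pi\setminus\{\pi_1\},\{\sigma_{\pi_j}\})$ and pairs $(\tau,S)$ and invoking the recursion \cite[Prop.~5]{CEFPP} for $\omega(\pi)$, which is precisely the content you subsume under ``the same pre-Lie algebraic manipulations''.
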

\begin{proof}
Let $(\Phi,\Psi)$ be the pair of characters on $T(T_+(\A))$ extending $(\varphi,\psi)$, and take $\mathsf{K}$ and $\mathsf{P}$ the corresponding c-free and c-monotone cumulant infinitesimal characters, respectively. Also, write $\Psi = \exp^*(\rho')$. From \cref{prop:relation}, we have that $\mathsf{K}=W_{-\rho'}(\mathsf{P})$. Using \cref{eq:MagnusExt}, the previous relation implies that $$\mathsf{P} = \Omega_{-\rho'}(\mathsf{K}) = \sum_{m\geq0}\frac{B_m}{m!}r^{(m)}_{\lhd-\rho'}(\mathsf{K}).$$
Let $w= a_1\cdots a_n \in \A^{\otimes n}$. Using \cref{lem:iterated}, we can compute the iterated pre-Lie product $r^{(m)}_{\lhd-\rho'}(\mathsf{K})$ and obtain:
\begin{eqnarray*}
h_n^{(c)}(a_1,\ldots,a_n) &=& \mathsf{P}(w)
\\ &=& \sum_{m\geq0}\frac{B_m}{m!}r^{(m)}_{\lhd-\rho'}(\mathsf{K})(w)
\\ &=& \sum_{m=0}^{n-1}\frac{B_m}{m!}(-1)^m \sum_{\substack{\pi\in\NCirr_{m+1}(n)\\1\in \pi_1}} \frac{|\pi|!}{t(\pi)!} \mathsf{K}(w_{\pi_1}) \prod_{\substack{\pi_j\in\pi\\j\neq 1}} \rho'(w_{\pi_j}),
\end{eqnarray*}
where we have written the sum over $\NCirr_{m+1}(n)$ instead of $\mathrm{M}^{\scriptstyle\mathrm{irr}}_{m+1}(n)$ by grouping the $\frac{|\pi|!}{t(\pi)!}$ monotone partitions associated with the same base non-crossing partition. Also, notice that the last sum above runs from $0$ to $n-1$ since $r^{(m)}_{\lhd-\rho'}(\mathsf{K})(w)=0$ for $m\geq n$. Now, by using \eqref{eq:auxMB2} on each $\rho'(w_{\pi_j}) = h'_{|\pi_j|}(a_1,\ldots,a_n|\pi_j)$, we have that
$$ 
\mathsf{P}(w)=   \sum_{\substack{\pi\in\NCirr(n)\\1\in \pi_1}} (-1)^{|\pi|-1}\frac{B_{|\pi|-1}}{t(\pi\backslash\{\pi_1\})!} \mathsf{K}(w_{\pi_1}) \prod_{\substack{\pi_j\in\pi\\j\neq 1}}\left(\sum_{\sigma_j \in \NCirr(\pi_j)} (-1)^{|\sigma_j|-1}\omega(\sigma_j)k'_{|\sigma_j|}(a_1,\ldots,a_n|\sigma_j)\right).
$$
Now we can conclude in the same way that in the proof of \cite[Thm.~3]{CEFPP}. We will outline the main ideas. First, it is possible to show that there is a bijection between
\begin{itemize}
\item pairs $\Big(\pi\backslash\{\pi_1\}, \{\sigma_{\pi_j}\}_{\pi_j\in \pi\backslash \{\pi_1\}}\Big)$, where $\pi\in\NCirr(n)$, $\pi_1$ is its
unique outer block and the $\sigma_{\pi_j}\in\NCirr(\pi_j)$ are irreducible non-crossing
partitions of the blocks of $\pi\backslash \{\pi_1\} = \{\pi_2,\pi_3,\ldots,\pi_\ell\}$; 
\item pairs $(\tau, S)$, where $\tau = \tau' \cup \tau_1\in \NCirr(n)$ is an irreducible non-crossing partition with outer block $\tau_1$ and $S$ is a subset of the set of blocks of $\tau'$ that includes all the outer blocks of $\tau'$.
\end{itemize}
Finally, using the above bijection and the recursion in \cite[Prop.~5]{CEFPP} for the $\omega(\pi)$ coefficients, we can write
$$
h_n^{(c)}(a_1,\ldots,a_n) = \sum_{\substack{\tau\in\NCirr(n)\\1\in \tau_1}} (-1)^{|\tau|-1} \omega(\tau) k^{(c)}(a_1,\ldots,a_n|\tau_1)\prod_{\substack{\tau_j\in\tau\\j\neq 1}} k'_{|\pi_j|}(a_1,\ldots,a_n|\tau_j),
$$
as we wanted to prove.
\end{proof}



\section{Relation with $t$-cumulants}
\label{sec:tboolean}

In this section, we describe how the relation between c-free cumulants and $t$-Boolean cumulants motivates the definition of an analog object related with c-monotone cumulants.


\subsection{$t$-Boolean cumulants via shuffle algebra} 

Let us recall the notion of $t$-Boolean cumulants, which was introduced  in \cite{BW01}  by Bo\.zejko and Wysoczanski as combinatorial objects that define an interpolation between free and Boolean cumulants. More precisely, given a non-commutative probability space $(\A,\varphi)$ and any $t\in [0,1]$, we define the sequence of multilinear functionals $\{b^{(t)}_n:\A^n\to\mathbb{C}\}_{n\geq1}$ recursively given by
\begin{equation}
	\varphi(a_1\cdots a_n) = \sum_{\pi\in\NC(n)} t^{\mathrm{inner}(\pi)} b^{(t)}_\pi(a_1,\ldots,a_n),
\end{equation}
for any $n\geq1$ and $a_1,\ldots,a_n\in \A$, where $\mathrm{inner}(\pi)$ stands for the number of inner blocks of $\pi\in\NC(n)$. The appearance of the coefficient $t^\mathrm{inner}(\pi)$ in the above formula prompts us to relate $t$-Boolean cumulants with the definition of c-free cumulants. 
\par Now, consider the double tensor Hopf algebra $T(T_+(\A))$ as well as the character $\Phi$ on it, extending $\varphi$. Also, let $\beta$ be the infinitesimal character associated to the Boolean cumulants of $\varphi$, i.e.~$\Phi = \E_\succ(\beta)$. Recall that for any other linear functional $\psi: \A \to \mathbb{C}$, the infinitesimal character $\mathsf{K}$ associated to the c-free cumulants of $(\varphi,\psi)$ is given by
$$
	\beta = \Psi^{*-1}\succ \mathsf{K} \prec\Psi,
$$
where $\Psi$ is the character on $T(T_+(\A))$ extending $\psi$. In particular, for any $t \in [0,1]$, we take the character $\Phi_t := \E_\succ(t\beta)$. Then, if we write $\beta^{(t)}$ as the infinitesimal character associated to the c-free cumulants of $(\Phi,\Phi_t)$, we have that 
$$
	t\beta^{(t)} 
	= \Phi_t \succ t\beta \prec \Phi_t^{*-1} 
	= \E_\succ(t\beta) \succ t\beta \prec \E_\succ(t\beta)^{*-1}.
$$
The previous equation implies that $\Phi_t = \E_\prec (t\beta^{(t)})$, i.e.~$t\beta^{(t)}$ is the infinitesimal character associated to the free cumulants of $\Phi_t$. Then, by \cref{lem:auxL15}, we have that
\begin{eqnarray*}
	\beta(w) 
	&=& \big(\Phi_t^{*-1}\succ \beta^{(t)}\prec \Phi_t\big)(w)\\ 
	&=& \sum_{\pi\in \NC^{\mathrm{irr}}(n)} t^{|\pi|-1}\beta^{(t)}_\pi(w).
\end{eqnarray*}
The previous development motivates the following definition.

\begin{defi}
For $\Phi$ a character on $T(T_+(\A))$ and $t \in [0,1]$, we define the \textit{$t$-Boolean cumulant infinitesimal character} to be $\beta^{(t)}$ on $T(T_+(\A))$ given by
\begin{equation}
\label{eq:tBoolmc}
\beta^{(t)} = \E_\succ(t\beta)\succ\beta\prec \E_\succ(t\beta)^{*-1}.
\end{equation}
\end{defi}

\noindent From the right shuffle fixed-point equation, we have that
\begin{eqnarray*}
	\Phi 
	&=& \epsilon + \Phi\succ\beta \\ 
	&=& \epsilon + \Phi\succ \big( \Phi_t^{*-1}\succ\beta^{(t)}\prec \Phi_t\big),
\end{eqnarray*}
where by evaluating on a word $w=a_1\cdots a_n \in \A^{\otimes n}$, we obtain
$$
	\Phi(w) = \sum_{\pi\in\NC(n)} t^{\mathrm{inner}(\pi)} \beta^{(t)}_\pi(w).
$$
This equation implies:

\begin{prop}
Let $(\A,\varphi)$ be a non-commutative probability space and for any $t\in[0,1]$, consider the infinitesimal character
 $\beta^{(t)}$ on $T(T_+(\A))$ defined in \cref{eq:tBoolmc}. Then for any word $w=a_1\cdots a_n\in T_+(\A)$, we have
$$
	\beta^{(t)}(w) = b^{(t)}_n(a_1,\ldots,a_n).
$$
In other words, $\beta^{(t)}$ is the infinitesimal character on $T(T_+(\A))$ associated to the $t$-Boolean cumulants $\{b^{(t)}\}_{n\geq1}$ of $(\A,\varphi)$.
\end{prop}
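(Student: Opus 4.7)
The plan is to leverage the identity $\Phi(w) = \sum_{\pi \in \NC(n)} t^{\mathrm{inner}(\pi)} \beta^{(t)}_\pi(w)$ established in the display immediately preceding the statement, and to observe that it coincides with the defining moment-cumulant recursion for the $t$-Boolean cumulants. Since $\Phi(w) = \varphi(a_1 \cdots a_n)$ by construction of $\Phi$, both families $\{\beta^{(t)}(w) : w \in T_+(\A)\}$ and $\{b^{(t)}_n(a_1,\ldots,a_n)\}$ solve the same triangular system of equations indexed by word length, with the same left-hand side $\varphi(a_1\cdots a_n)$. The conclusion will then follow by the uniqueness of the solution, established via induction on $n$.

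Concretely, I would proceed by strong induction on $n = |w|$. For $n=1$, the unique partition $\{\{1\}\} \in \NC(1)$ has no inner blocks, so the identity collapses to $\varphi(a_1) = \beta^{(t)}(a_1) = b^{(t)}_1(a_1)$. For the induction step, I would isolate from $\sum_{\pi\in\NC(n)} t^{\mathrm{inner}(\pi)} \beta^{(t)}_\pi(w)$ the term corresponding to the maximal partition $\pi = 1_n = \{\{1,\ldots,n\}\}$; since $1_n$ has zero inner blocks, this term contributes precisely $\beta^{(t)}(w)$. All remaining partitions $\pi \neq 1_n$ involve $\beta^{(t)}$ only on subwords $w_{\pi_i}$ of length strictly less than $n$, so the inductive hypothesis gives $\beta^{(t)}(w_{\pi_i}) = b^{(t)}_{|\pi_i|}(a_1,\ldots,a_n|\pi_i)$. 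Performing the analogous isolation of $b^{(t)}_n(a_1,\ldots,a_n)$ in the defining recursion of the $t$-Boolean cumulants and comparing the two resulting expressions for $\varphi(a_1 \cdots a_n)$ yields $\beta^{(t)}(w) = b^{(t)}_n(a_1,\ldots,a_n)$.

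There is no genuinely hard remaining step: the combinatorial heart of the argument is already accomplished in the derivation preceding the statement, where the coefficient $t^{\mathrm{inner}(\pi)}$ is extracted from the interplay between the right half-shuffle fixed-point equation $\Phi = \epsilon + \Phi \succ \beta$, the defining identity $\beta = \Phi_t^{*-1} \succ \beta^{(t)} \prec \Phi_t$, and the irreducible-partition expansion provided by \cref{lem:auxL15}. Once that identity is in hand, the identification of $\beta^{(t)}$ with the $t$-Boolean cumulants is a straightforward unravelling of the triangular recursion.
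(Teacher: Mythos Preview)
Your proposal is correct and is precisely the argument the paper intends: the text immediately before the proposition derives $\Phi(w) = \sum_{\pi\in\NC(n)} t^{\mathrm{inner}(\pi)} \beta^{(t)}_\pi(w)$ and then simply writes ``This equation implies:'' the statement, leaving the triangular uniqueness argument implicit. You have spelled out exactly that implicit step --- isolating the $\pi=1_n$ term and inducting on word length --- so your approach matches the paper's, with more detail than the paper itself provides.
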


From the shuffle algebra point of view on $t$-Boolean cumulants seen as infinitesimal character, we can recover some properties of $t$-Boolean cumulants using shuffle-algebraic relations. For instance:

\begin{prop}[{\cite[Rmk.~7.9]{CENPW}}]
\label{prop:tsBoolean}
Let $(\A,\varphi)$ be a non-commutative probability space and for any $t\in [0,1]$, let $\{b_n^{(t)}: \A^n \to \mathbb{C} \}_{n\geq1}$ be the family of $t$-Boolean cumulants of $(\A,\varphi)$. Then for any $s,t\in[0,1]$, $n\geq1$, and $a_1,\ldots,a_n\in \A$, we have that
\begin{equation}
	b_n^{(t)}(a_1,\ldots,a_n) 
	= \sum_{\pi\in\NC^{\mathrm{irr}}(n)}(s-t)^{\mathrm{inner}(\pi)} b_\pi^{(s)}(a_1,\ldots,a_n).
\end{equation}
\end{prop}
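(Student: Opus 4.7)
The plan is to establish the shuffle-algebraic identity
$$\beta^{(t)}(w) = \sum_{\tilde\pi\in\NCirr(n)} (s-t)^{|\tilde\pi|-1}\beta^{(s)}_{\tilde\pi}(w)$$
for every word $w = a_1\cdots a_n$, from which the claim follows by evaluation, using $|\tilde\pi|-1=\mathrm{inner}(\tilde\pi)$ for $\tilde\pi\in\NCirr(n)$. The strategy is a twofold application of Lemma~\ref{lem:auxL15} followed by a combinatorial evaluation.

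First I would apply the second identity of Lemma~\ref{lem:auxL15} with $\Phi = \Phi_t=\E_\succ(t\beta)$ (whose Boolean cumulant form is $t\beta$) and $\alpha = \beta$, obtaining $\beta^{(t)}(w) = \sum_{\pi\in\NCirr(n)} (-t)^{|\pi|-1}\beta_\pi(w)$. The derivation earlier in this section (which is the first identity of Lemma~\ref{lem:auxL15} applied to $\Phi_s$ with $\alpha = \beta^{(s)}$) simultaneously gives $\beta(u) = \sum_{\sigma\in\NCirr(m)} s^{|\sigma|-1}\beta^{(s)}_\sigma(u)$ for every word $u$ of length $m$. Substituting the latter for each factor $\beta(w_V)$ in the former ($V$ ranging over blocks of $\pi$) and reindexing the double sum by the refined non-crossing partition $\tilde\pi := \bigsqcup_{V\in\pi}\sigma_V$ (using $\sum_V(|\sigma_V|-1) = |\tilde\pi|-|\pi|$) yields
$$\beta^{(t)}(w) = \sum_{\tilde\pi\in\NC(n)} \beta^{(s)}_{\tilde\pi}(w)\cdot C(\tilde\pi;s,t),\qquad C(\tilde\pi;s,t) := \sum_{(\pi,(\sigma_V))\,\mapsto\,\tilde\pi} (-t)^{|\pi|-1} s^{|\tilde\pi|-|\pi|},$$
the inner sum ranging over pairs $(\pi,(\sigma_V)_{V\in\pi})$ with $\pi\in\NCirr(n)$, each $\sigma_V=\tilde\pi|_V\in\NCirr(|V|)$, and $\bigsqcup_V\sigma_V=\tilde\pi$.

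The main obstacle is the evaluation of $C(\tilde\pi;s,t)$. When $\tilde\pi$ is reducible, the joint irreducibility of $\pi$ and of every $\sigma_V$ forces $1$ and $n$ into a common block of $\tilde\pi$, so no admissible pair exists and $C(\tilde\pi;s,t)=0$. When $\tilde\pi\in\NCirr(n)$, I would prove that the admissible $\pi$'s are in bijection with subsets $T$ of the $|\tilde\pi|-1$ parent-child edges of the nesting tree $t(\tilde\pi)$: each block of $\pi$ corresponds to a rooted subtree of $t(\tilde\pi)$ obtained by contracting precisely the edges of $T$ lying within it. The non-crossing condition rules out merges of sibling blocks via a direct crossing argument in the common-parent gap, so only parent-child contractions survive, giving $|\pi| = |\tilde\pi|-|T|$. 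The binomial theorem then delivers
$$C(\tilde\pi;s,t) = \sum_{k=0}^{|\tilde\pi|-1} \binom{|\tilde\pi|-1}{k} (-t)^{|\tilde\pi|-1-k} s^k = (s-t)^{|\tilde\pi|-1},$$
completing the proof. The delicate combinatorial point is the bijection itself---verifying that every admissible $\pi$ arises from such an edge-contraction---which rests on the crossing argument that excludes sibling merges.
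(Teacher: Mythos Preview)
Your approach is correct and reaches the same conclusion, but it is genuinely different from the paper's. The paper stays entirely at the shuffle-algebraic level: combining $\beta^{(t)}=\Phi_t\succ\beta\prec\Phi_t^{*-1}$ with $\beta=\Phi_s^{*-1}\succ\beta^{(s)}\prec\Phi_s$ gives $\beta^{(t)}=(\Phi_s*\Phi_t^{*-1})^{*-1}\succ\beta^{(s)}\prec(\Phi_s*\Phi_t^{*-1})$; then the identity $\E_\succ(t\beta)^{*-1}=\E_\prec(-t\beta)$ together with the subordination relation $\E_\prec(\alpha_1)*\E_\prec(\Psi_1^{*-1}\succ\alpha_2\prec\Psi_1)=\E_\prec(\alpha_1+\alpha_2)$ yields $\Phi_s*\Phi_t^{*-1}=\E_\prec((s-t)\beta^{(s)})$, and a \emph{single} application of \cref{lem:auxL15} finishes. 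The coefficient $(s-t)^{|\pi|-1}$ appears for free because $(s-t)\beta^{(s)}$ is identified as the free-cumulant form of $\Phi_s*\Phi_t^{*-1}$; no nested-sum reindexing or tree combinatorics is needed. Your route trades this algebraic identification for an explicit combinatorial evaluation of the coefficient $C(\tilde\pi;s,t)$, which is more elementary (it avoids the subordination identity) but longer.

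One caveat on your bijection: the stated justification (``the non-crossing condition rules out merges of sibling blocks via a direct crossing argument'') is not sufficient on its own. Two sibling $\tilde\pi$-blocks lying in the \emph{same} gap of their common parent can be merged into a block $V$ of a perfectly non-crossing $\pi$; what actually fails there is the irreducibility of $\tilde\pi|_V$, since $\min V$ and $\max V$ then lie in distinct $\tilde\pi$-blocks. The clean argument uses both constraints together: irreducibility of $\tilde\pi|_V$ forces a unique top block $B^*\in\mathcal{B}_V:=\{B\in\tilde\pi:B\subseteq V\}$ with $[\min B^*,\max B^*]=[\min V,\max V]$, and then non-crossing of $\pi$ forces every path in $t(\tilde\pi)$ from $B^*$ to another block of $\mathcal{B}_V$ to stay inside $\mathcal{B}_V$ (an intermediate block landing in some other $V'$ produces a crossing between $V$ and $V'$). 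With the subtree structure established in this way, your binomial computation goes through.
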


\begin{proof}
Let $\Phi$ be the character on $T(T_+(\A))$ extending $\varphi$. For $s,t\in[0,1]$, let $\beta^{(s)}$ and $\beta^{(t)}$ be the infinitesimal characters associated to the $s$-Boolean and $t$-Boolean cumulants, respectively. In particular, we have the relations
\begin{eqnarray*}
	\beta 
	&=& \Phi_s^{*-1}\succ \beta^{(s)}\prec \Phi_s,\\
	\beta^{(t)} 
	&=& \Phi_t\succ\beta\prec \Phi_t^{*-1},
\end{eqnarray*}
where $\Phi_t = \E_\succ(t\beta)$ and $\Phi_s= \E_\succ(s\beta)$. Combining both equations we obtain
\begin{eqnarray*}
	\beta^{(t)}
	&=& \Phi_t \succ\beta\prec\Phi_t^{*-1}\\ 
	&=& \Phi_t \succ\big( \Phi_s^{*-1}\succ \beta^{(s)} \prec\Phi_s\big)\prec \Phi_t^{*-1}\\ 
	&=& \big(\Phi_s*\Phi_t^{*-1}\big)^{*-1}\succ \beta^{(s)}\prec \Phi_s*\Phi_t^{*-1},
\end{eqnarray*}
where we used the shuffle identities in the third equality. It is not difficult to show from the definition that if $\alpha\in\mathfrak{g}$, then $\E_\prec(\alpha)^{*-1} = \E_\succ(-\alpha)$ (see \cite[Lem.~2]{EFP1}). Together with the relation $\Phi_s =\E_\prec(s\beta^{(s)})$, we compute
\begin{eqnarray*}
	\Phi_s*\Phi_t^{*-1}
	&=& \E_\prec(s\beta^{(s)})* \E_\succ(t\beta)^{*-1}\\ 
	&=&  \E_\prec(s\beta^{(s)})* \E_\prec(-t\beta).
\end{eqnarray*}
On the other hand, if $\Psi_1 = \E_\prec(\alpha_1)$ and $\Psi_2= \E_\prec(\alpha_2)$ are two characters on $T(T_+(\A))$, then it can be shown from the shuffle identities that
\begin{equation}
\label{eq:subordination}
	\Psi_1\boxplus \Psi_2 
	= \E_\prec(\alpha_1+\alpha_2) = \E_\prec(\alpha_1)*\E_\prec( \Psi_1^{*-1} \succ \alpha_2 \prec \Psi_1),
\end{equation}
see, for instance \cite[Thm.~31]{EFP3}. With this identity in hand and noticing that $$-t\beta = \Phi_s^{*-1}\succ (-t\beta^{(s)})\prec \Phi_s,$$ we obtain
$$
	\Phi_s*\Phi_t^{*-1} 
	=  \E_\prec(s\beta^{(s)})* \E_\prec(  \Phi_s^{*-1} \succ (-t\beta^{(s)}) \prec \Phi_s ) 
	= \E_\prec(s\beta^{(s)}- t\beta^{(s)})  = \E_\prec((s-t)\beta^{(s)}),
$$
i.e.~$(s-t)\beta^{(s)}$ is the infinitesimal character associated to the free cumulants of $\Phi_s*\Phi_t^{*-1}$.  Finally applying \cref{lem:auxL15}, we have for a word $w=a_1\cdots a_n\in \A^{\otimes n}$:
\begin{eqnarray*}
	\beta^{(t)}(w) 
	&=& \left( \big(\Phi_s*\Phi_t^{*-1}\big)^{*-1}\succ \beta^{(s)}\prec \Phi_s*\Phi_t^{*-1}\right)(w)\\ 
	&=& \sum_{\pi\in \NC_{\mathrm{irr}}(n)} (s-t)^{|\pi|-1}\beta^{(s)}_\pi(w),
\end{eqnarray*}
and we conclude by identifying the evaluation $\beta^{(t)}(w)$ with the $t$-Boolean cumulant $b_n^{(t)}(a_1,\ldots,a_n)$.
\end{proof}

\begin{rem}
In the proof of the previous proposition,  in particular in \eqref{eq:subordination}, we have written the free additive convolution of two characters as the monotone additive convolution of two characters.  In the language of non-commutative probability, we can recall the \textit{subordination convolution} introduced by Lenczewzki in \cite{Len}. More precisely, for any two distributions $\mu$ and $\nu$, the subordination convolution of $\nu$ and $\mu$ is the distribution $\nu\boxright \mu$  such that 
$$
	\mu\boxplus \nu = \mu \blacktriangleright (\nu \boxright\mu).
$$
On the other hand, for any two characters $\Psi_1$ and $\Psi_2$ in $T(T_+(\A))$, we can define the character $\Psi_2\boxright \Psi_1$ by the recipe $\Psi_2\boxright\Psi_1 := \E_\prec\big(\Psi_1^{*-1}\succ \alpha_2 \prec \Psi_1 \big)$, where $\Psi_2 = \E_\prec(\alpha_2)$. By \eqref{eq:subordination}, we have that $ \Psi_1\boxplus\Psi_2 = \Psi_1 * (\Psi_2\boxright\Psi_1)$. If in particular $\Psi_1$ and $\Psi_2$ are the characters extending the distributions $\mu$ and $\nu$ of random variables on a non-commutative probability space $(\A,\varphi)$, then we can show that $\Psi_2\boxright\Psi_1$ is the character on $T(T_+(\A))$ extending the subordination convolution $\nu\boxright\mu$.
\end{rem}

Now, we explain the relation of $t$-Boolean cumulants with the Belinschi--Nica semigroup of maps defined in \cite{BN06}. Given $t\geq0$ and a character $\Phi$ on the double tensor Hopf algebra $T(T_+(\A))$, we have a family of maps $\{\mathbb{B}_t\}_{t\geq0}$, where $\Phi\mapsto \mathbb{B}_t(\Phi)$ is a new character given by (\cite[Lem.~42]{EFP3})
\begin{equation}
	\mathbb{B}_t(\Phi) 
	= \E_\prec\big( \E_\prec(t\kappa)^{*-1}\succ\kappa\prec \E_\prec(t\kappa) \big),
\end{equation}
where $\Phi = \E_\prec(\kappa)$. The next result tells how to re-interpret the above equation using the right half-shuffle logarithm $\beta = \mathcal{L}_\succ(\Phi)$ instead (see \cref{rem:shuffleLogs}).

\begin{lemma}
\label{lem:tBPBij}
Consider the double tensor Hopf algebra $T(T_+(\A))$. If $\Phi$ is a character on $T(T_+(\A))$ such that $\Phi = \E_\succ(\beta)$, then for any $t\geq0$, we have
\begin{equation}
	\mathbb{B}_t(\Phi) = \E_\succ\left( \E_\prec(t\beta)\succ\beta\prec  \E_\prec(t\beta)^{*-1}\right).
\end{equation}
\end{lemma}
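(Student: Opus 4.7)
The plan is to identify the Boolean cumulant infinitesimal character of $\mathbb{B}_t(\Phi)$ directly from its definition. Since $\E_\succ: \mathfrak{g}\to G$ is a bijection by \cref{thm:mainEFP}, $\mathbb{B}_t(\Phi)$ is uniquely determined by its Boolean cumulant, so it suffices to show that the argument of $\E_\succ$ on the right-hand side is the Boolean cumulant of $\mathbb{B}_t(\Phi)$.

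First, from the definition $\mathbb{B}_t(\Phi) = \E_\prec(\tilde\kappa)$ with $\tilde\kappa := \E_\prec(t\kappa)^{*-1}\succ\kappa\prec\E_\prec(t\kappa)$, the half-shuffle adjoint relation $\tilde\beta = \mathbb{B}_t(\Phi)^{*-1}\succ\tilde\kappa\prec\mathbb{B}_t(\Phi)$ (a consequence of the fixed-point equations \eqref{eq:FixedEq}) combined with the shuffle axioms \eqref{eq:shuffle} yields
$$
\tilde\beta = \bigl(\E_\prec(t\kappa)*\mathbb{B}_t(\Phi)\bigr)^{*-1}\succ\kappa\prec\bigl(\E_\prec(t\kappa)*\mathbb{B}_t(\Phi)\bigr).
$$
The critical step is then to identify the convolution product $\E_\prec(t\kappa)*\mathbb{B}_t(\Phi)$ with an expression involving $\Phi$ and $\E_\prec(t\beta)$. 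To this end, I would apply the subordination identity \eqref{eq:subordination} to the free additive convolution $\Phi\boxplus\E_\prec(t\kappa) = \E_\prec((1+t)\kappa)$ in two ways, exploiting the commutativity of $\boxplus$: one way gives $\Phi*\E_\prec(t\beta)$ (using $\Phi^{*-1}\succ\kappa\prec\Phi = \beta$), the other yields $\E_\prec(t\kappa)*\mathbb{B}_t(\Phi)$ by the very definition of $\mathbb{B}_t$. Equating them produces the key identity
$$
\E_\prec(t\kappa)*\mathbb{B}_t(\Phi) = \Phi*\E_\prec(t\beta).
$$

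Substituting this into the expression for $\tilde\beta$ gives
$$
\tilde\beta = \bigl(\Phi*\E_\prec(t\beta)\bigr)^{*-1}\succ\kappa\prec\bigl(\Phi*\E_\prec(t\beta)\bigr),
$$
and then using the shuffle axioms \eqref{eq:shuffle} to distribute the convolution factors across the half-shuffles, together with the identity $\Phi^{*-1}\succ\kappa\prec\Phi = \beta$, collapses the expression to the required form in terms of $\beta$ and $\E_\prec(t\beta)$. Applying $\E_\succ$ then concludes.

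The main obstacle is the derivation of the key identity $\E_\prec(t\kappa)*\mathbb{B}_t(\Phi) = \Phi*\E_\prec(t\beta)$, which transports the commutativity of free additive convolution into the shuffle-algebraic language; the remaining manipulations are careful bookkeeping with the three shuffle axioms. A minor bookkeeping point to watch is the correct grouping of nested $\succ$ and $\prec$ operations via the middle axiom $(f\succ g)\prec h = f\succ(g\prec h)$, which ensures the simplifications can be performed without ambiguity.
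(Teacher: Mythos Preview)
Your proposal is correct and follows essentially the same route as the paper's proof: both compute the Boolean cumulant of $\mathbb{B}_t(\Phi)$ by expressing it as a shuffle-adjoint of $\kappa$ by the character $\E_\prec(t\kappa)*\mathbb{B}_t(\Phi)$, then use the subordination identity \eqref{eq:subordination} twice (equivalently, once in each order, as you phrase it via commutativity of $\boxplus$) to rewrite this character as $\Phi*\E_\prec(t\beta)$, and finally collapse via $\Phi^{*-1}\succ\kappa\prec\Phi=\beta$. Note that both your computation and the paper's yield $\E_\prec(t\beta)^{*-1}\succ\beta\prec\E_\prec(t\beta)$, so the displayed formula in the lemma statement has the inverse on the wrong side; this is a typo in the statement, not an issue with your argument.
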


\begin{proof}
Let $\Phi$ be a character on $T(T_+(\A))$ and take $t\geq0$. Let $\beta$ and $\kappa$ be the infinitesimal characters such that $\Phi = \E_\prec(\kappa) = \E_\succ(\beta)$. Recalling that $\kappa = \E_{\prec}(\kappa)\succ \beta\prec \E_\prec(\kappa)^{*-1}$ and writing $\theta_{\E_\prec(t\kappa)}(\kappa):=   \E_\prec(t\kappa)^{*-1}\succ\kappa\prec \E_\prec(t\kappa) $, we have that
\begin{eqnarray*}
	\mathbb{B}_t(\Phi) 
	&=& \E_\prec\big( \theta_{\E_\prec(t\kappa)}(\kappa)  \big)\\ 
	&=& \E_\succ\big( \mathcal{L}_\succ \big( \E_\prec( \theta_{\E_\prec(t\kappa)}(\kappa) )\big)\big)\\
	&=& \E_\succ\big(  \E_\prec( \theta_{\E_\prec(t\kappa)}(\kappa))^{*-1}
	\succ \theta_{\E_\prec(t\kappa)}(\kappa) \prec \E_\prec( \theta_{\E_\prec(t\kappa)}(\kappa)) \big)\\ 
	&=& \E_\succ\big(  \E_\prec( \theta_{\E_\prec(t\kappa)}(\kappa))^{*-1}*\E_\prec(t\kappa)^{*-1} 
	\succ\kappa \prec \E_\prec(t\kappa)* \E_\prec( \theta_{\E_\prec(t\kappa)}(\kappa)) \big) ,
\end{eqnarray*}
where we used the shuffle identities in the last equality. Also, using \eqref{eq:subordination} twice, we obtain
\begin{eqnarray*}
	\E_\prec(t\kappa)* \E_\prec( \theta_{\E_\prec(t\kappa)}(\kappa)) &=& \E_\prec((t+1)\kappa) \\ 
	&=& \E_\prec(\kappa)*\E_\prec\big( \E_\prec(\kappa)\succ t\kappa \prec \E_\prec(\kappa)\big) \\ 
	&=& \E_\prec(\kappa)* \E_\prec(t\beta) .
\end{eqnarray*}
Finally, we compute
\begin{eqnarray*}
	\mathbb{B}_t(\Phi) 
	&=& \E_\succ\big(\E_\prec(t\beta)^{*-1}* \E_\prec(\kappa)^{*-1}\succ \kappa\prec  \E_\prec(\kappa)* \E_\prec(t\beta) \big)\\ 
	&=&  \E_\succ\big(\E_\prec(t\beta)^{*-1} \succ \big(\E_\prec(\kappa)^{*-1} 
	\succ \kappa\prec \E_\prec(\kappa) \big) \prec \E_\prec(t\beta) \big)\\ 
	&=& \E_\succ\big( \E_\prec(t\beta)^{*-1}\succ\beta\prec \E_\prec(t\beta) \big),
\end{eqnarray*}
and the proof is complete.
\end{proof}

A natural question follows by considering the relation between the distribution $\mathbb{B}_t(\Phi)$ and $t$-Boolean cumulants. In this situation, we can prove a claim remarked in \cite[Rmk.~11.9]{CENPW} which states that $t$-Boolean cumulants of $\mathbb{B}_t(\Phi)$ are given precisely by the Boolean cumulants of $\Phi$.

\begin{prop}[{\cite[Rmk.~11.9]{CENPW}}]
Let $(\A,\varphi)$ be a non-commutative probability space. Consider the double tensor Hopf algebra $T(T_+(\A))$ and the character $\Phi$ on it, extending $\varphi$, and write $\Phi = \E_\succ(\beta)$. Then for any $t\in[0,1]$, the $t$-Boolean infinitesimal character of $\mathbb{B}_t(\Phi)$ is given by $\beta$.
\end{prop}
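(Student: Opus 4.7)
The plan is to unfold the definition of the $t$-Boolean cumulant infinitesimal character applied to $\mathbb{B}_t(\Phi)$, to identify the auxiliary character $\E_\succ(t\tilde{\beta})$ with $\E_\prec(t\beta)$, and then to collapse the resulting half-shuffle conjugation via the shuffle axioms.

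Concretely, set $\tilde{\beta} := \mathcal{L}_\succ(\mathbb{B}_t(\Phi))$, so that $\mathbb{B}_t(\Phi) = \E_\succ(\tilde{\beta})$. Applied to $\mathbb{B}_t(\Phi)$, the defining formula \eqref{eq:tBoolmc} reduces the claim to the single identity
\[
\E_\succ(t\tilde{\beta}) \succ \tilde{\beta} \prec \E_\succ(t\tilde{\beta})^{*-1} = \beta.
\]

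The key observation is that $\E_\succ(t\tilde{\beta}) = \E_\prec(t\beta)$. Indeed, \cref{lem:tBPBij} (with $\tilde\beta$ read off as $\mathcal{L}_\succ(\mathbb{B}_t(\Phi))$) gives
\[
\tilde{\beta} = \E_\prec(t\beta)^{*-1} \succ \beta \prec \E_\prec(t\beta),
\]
so by linearity $t\tilde{\beta} = \E_\prec(t\beta)^{*-1} \succ (t\beta) \prec \E_\prec(t\beta)$. Since $t\beta$ is, tautologically, the free cumulant character of $\E_\prec(t\beta)$, the free-to-Boolean relation \eqref{eq:FreeToBool} recognises the right-hand side as the Boolean cumulant $\mathcal{L}_\succ\bigl(\E_\prec(t\beta)\bigr)$. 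Applying $\E_\succ$ yields $\E_\succ(t\tilde{\beta}) = \E_\prec(t\beta)$, as claimed.

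Substituting back and collapsing the telescoping half-shuffle conjugations using the shuffle axioms $f \succ (g \succ X) = (f*g) \succ X$ and $(Y \prec f) \prec g = Y \prec (f*g)$ from \eqref{eq:shuffle}:
\begin{align*}
\E_\succ(t\tilde{\beta}) \succ \tilde{\beta} \prec \E_\succ(t\tilde{\beta})^{*-1}
&= \E_\prec(t\beta) \succ \bigl(\E_\prec(t\beta)^{*-1} \succ \beta \prec \E_\prec(t\beta)\bigr) \prec \E_\prec(t\beta)^{*-1} \\
&= \bigl(\E_\prec(t\beta) * \E_\prec(t\beta)^{*-1}\bigr) \succ \beta \prec \bigl(\E_\prec(t\beta) * \E_\prec(t\beta)^{*-1}\bigr) \\
&= \epsilon \succ \beta \prec \epsilon = \beta.
\end{align*}

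The only conceptual leap is recognising that $t\tilde{\beta}$ is precisely the Boolean cumulant of the character $\E_\prec(t\beta)$, whose free cumulant is $t\beta$; once this is seen, the two outer half-shuffle conjugations telescope into the identity and the result follows immediately. No further obstacle arises.
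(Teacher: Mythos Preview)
Your proof is correct and follows essentially the same approach as the paper: both arguments hinge on the identification $\E_\succ(t\tilde\beta)=\E_\prec(t\beta)$, established via the free-to-Boolean relation applied to the character $\E_\prec(t\beta)$. The only cosmetic difference is that the paper finishes by invoking injectivity of $\E_\prec$ (comparing $\E_\prec(t\lambda^{(t)})=\E_\prec(t\beta)$), whereas you substitute directly into the defining formula \eqref{eq:tBoolmc} and collapse the conjugation via the shuffle axioms; your variant has the minor advantage of not requiring a separate check at $t=0$.
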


\begin{proof}
Fix $t\in[0,1]$. By \cref{lem:tBPBij}, we have
$$
	\mathbb{B}_t(\Phi) = \E_\succ\left( \E_\prec(t\beta)^{*-1}\succ\beta\prec  \E_\prec(t\beta)\right).
$$
Define $\gamma_t :=  \E_\prec(t\beta)^{*-1}\succ\beta\prec  \E_\prec(t\beta)$. If $\lambda^{(t)}$ is the $t$-Boolean infinitesimal character of $\mathbb{B}_t(\Phi)$, by definition we have that
$$
	\gamma_t = \Psi_t^{*-1}\succ \lambda^{(t)}\prec \Psi_t,
$$
where $\Psi_t:= \E_\succ (t\gamma_t) = \E_\prec(t\lambda^{(t)})$. From the first equality, we compute
\begin{eqnarray*}
	\Psi_t
	&=& \E_\succ(t\gamma_t)\\ 
	&=& \E_\succ\big(  \E_\prec(t\beta)^{*-1}\succ t\beta\prec  \E_\prec(t\beta)\big)\\ 
	&=& \E_\prec(t\beta),
\end{eqnarray*}
where we used the shuffle equation $\E_\prec(\alpha) = \E_\succ(\E_\prec(\alpha)^{*-1}\succ \alpha \prec\E_\prec(\alpha))$, for any $\alpha$ infinitesimal character. By comparing $\E_\prec(t\lambda^{(t)}) = \Psi_t = \E_\prec(t\beta)$, we conclude that $\lambda^{(t)} = \beta$, i.e.~the $t$-Boolean cumulants of $\mathbb{B}_t(\Phi)$ are given by the Boolean cumulants of $\Phi$, as we wanted to prove.
\end{proof}


\subsection{$t$-monotone cumulants} 

As final part of this section, we propose the analogous notion of $t$-Boolean cumulants in the case of c-monotone convolution. Recall that $t$-Boolean cumulants can be seen as a particular case of c-free cumulants. More precisely, if $(\A,\varphi)$ is a non-commutative probability space, $T(T_+(\A))$ is the double tensor Hopf algebra and $\Phi$ is the character on $T(T_+(\A))$ extending $\varphi$, then the infinitesimal character extending the $t$-Boolean cumulants is the c-free infinitesimal character of $(\Phi,\Phi_t)$. This prompts us to the following definition.

\begin{defi}
For a character on $T(T_+(\A))$ and $t\in [0,1]$, we define the \textit{$t$-monotone cumulant infinitesimal character} to be $\rho^{(t)}$ on $T(T_+(\A))$ given by the c-monotone infinitesimal character of $(\Phi,\Phi_t)$, where $\Phi_t = \E_\succ(t\beta)$ with $\Phi = \E_\succ(\beta)$.
\end{defi}

By definition of c-monotone infinitesimal character, we have $\rho^{(t)} = \mathsf{P}(\Phi,\Phi_t)$ such that \linebreak$\beta = W_{\rho'(t)}(\rho^{(t)})$, where $\exp^{*}(\rho'(t)) = \Phi_t$. On the other hand, by the pre-Lie Magnus expansion, we have that $t\beta = W(\rho'(t)) = W_{\rho'(t)}(\rho'(t))$. Since $W_{\rho'(t)}$ is a linear operator, we have that 
$$
	W_{\rho'(t)}(\rho^{(t)}) = \beta = W_{\rho'(t)}(\rho'(t)/t),
$$
which implies that $\rho'(t) = t\rho^{(t)}$ since the operator $W_{\rho'(t)}$ is invertible with respect to composition. Hence, if we define the \textit{$t$-monotone cumulants} as the elements of the family of multilinear forms $\{h^{(t)}_n:\A^n\to\mathbb{C}\}_{n\geq1}$ by the recipe $h_n^{(t)}(a_1,\ldots,a_n) = \rho^{(t)}(w)$, for any random variables $a_1,\ldots,a_n\in \A$ and  the word $w= a_1\cdots a_n\in \A^{\otimes n}$, we can use  \eqref{eq:lemma} and \cref{prop:46P} and arrive to the following identities. 

\begin{theo}
\label{thm:tmonotone}
Let $(\A,\varphi)$ be a non-commutative probability space and let $\{b_n:\A^n\to\mathbb{C}\}_{n\geq1}$ and $\{h_n^{(t)}:\A^n\to\mathbb{C}\}_{n\geq1}$ be Boolean and $t$-monotone cumulants of $(\A,\varphi)$, respectively. Then, for any $n\geq1$ and $a_1,\ldots,a_n\in \A$, we have that
\begin{eqnarray*}
b_n(a_1,\ldots,a_n) &=& \sum_{{\NCirr(n)}} \frac{t^{|\pi|-1}}{t(\pi)!} h^{(t)}_\pi(a_1,\ldots,a_n),\\
\varphi(a_1\cdots a_n) &=&  \sum_{{\mathrm{NC}(n)}} \frac{t^{\mathrm{inner}(\pi)}}{t(\pi)!} h^{(t)}_\pi(a_1,\ldots,a_n).
\end{eqnarray*}
\end{theo}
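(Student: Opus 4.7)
The plan is to apply \cref{lem:Wrho} and the argument of \cref{prop:46P} verbatim, with one key substitution: the ``second'' character is $\Phi_t = \E_\succ(t\beta)$ rather than $\Psi$, and its monotone cumulant character is $\rho'(t) = t\rho^{(t)}$ as established in the paragraph preceding the theorem. In particular, the relation $\beta = W_{\rho'(t)}(\rho^{(t)})$ is the identity from which both formulas will be extracted.

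First I would prove the Boolean formula. Evaluating $\beta = W_{\rho'(t)}(\rho^{(t)})$ on a word $w = a_1\cdots a_n \in \A^{\otimes n}$ via \cref{lem:Wrho} (with $\mathsf{P}$ and $\rho'$ replaced by $\rho^{(t)}$ and $t\rho^{(t)}$, respectively) gives
\begin{equation*}
b_n(a_1,\ldots,a_n) = \beta(w) = \sum_{\substack{\pi \in \mathrm{M}^{\scriptscriptstyle\mathrm{irr}}(n)\\ 1 \in \pi_1 \in \pi}} \frac{1}{|\pi|!}\, \rho^{(t)}(w_{\pi_1}) \prod_{\substack{\pi_j \in \pi\\ j\neq 1}} t\rho^{(t)}(w_{\pi_j}).
\end{equation*}
Since an irreducible $\pi \in \NC^{\scriptscriptstyle\mathrm{irr}}(n)$ admits exactly $m(\pi) = |\pi|!/t(\pi)!$ monotone refinements, I regroup the sum over monotone partitions into a sum over their underlying non-crossing partitions, picking up a factor $|\pi|!/t(\pi)!$, and pull the $|\pi|-1$ copies of $t$ out of the product. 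This yields exactly
\begin{equation*}
b_n(a_1,\ldots,a_n) = \sum_{\pi \in \NC^{\scriptscriptstyle\mathrm{irr}}(n)} \frac{t^{|\pi|-1}}{t(\pi)!}\, h^{(t)}_\pi(a_1,\ldots,a_n),
\end{equation*}
the first claimed identity.

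For the moment formula, I follow the strategy in the proof of \cref{prop:46P}. Starting from $\Phi = \E_\succ(\beta) = \epsilon + \Phi \succ \beta = \epsilon + \Phi \succ W_{\rho'(t)}(\rho^{(t)})$ and iterating the right half-shuffle fixed-point equation on a word $w = a_1\cdots a_n$, one gets a sum over sequences of irreducible components that assemble to an arbitrary $\pi \in \NC(n)$, with $\rho^{(t)}$ evaluated on the outer blocks and $t\rho^{(t)}$ on the inner blocks (the latter arising from the pre-Lie iterations inside each $W_{\rho'(t)}(\rho^{(t)})$). Combining with the telescoping identity $t(\pi)! = t(\pi^{(1)})!\cdots t(\pi^{(s)})!$ over the irreducible components, exactly as in \cref{prop:46P}, gives
\begin{equation*}
\varphi(a_1\cdots a_n) = \sum_{\pi \in \NC(n)} \frac{1}{t(\pi)!} \Bigl(\prod_{\pi_i \mathrm{\ outer}} \rho^{(t)}(w_{\pi_i})\Bigr) \Bigl(\prod_{\pi_j \mathrm{\ inner}} t\rho^{(t)}(w_{\pi_j})\Bigr),
\end{equation*}
and pulling out the $\mathrm{inner}(\pi)$ factors of $t$ recovers the second identity. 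No step should present a real obstacle; the only thing to be careful about is the bookkeeping that distinguishes the outer block (on which $\rho^{(t)}$ appears with coefficient $1$) from the inner blocks (where the $t$ from $\rho'(t) = t\rho^{(t)}$ is what produces the exponent $\mathrm{inner}(\pi)$).
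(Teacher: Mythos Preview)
Your proposal is correct and follows exactly the approach the paper indicates: invoke \cref{lem:Wrho} (equation \eqref{eq:lemma}) on $\beta = W_{t\rho^{(t)}}(\rho^{(t)})$ for the first identity, and rerun the argument of \cref{prop:46P} with $(\mathsf{P},\rho')$ replaced by $(\rho^{(t)},t\rho^{(t)})$ for the second. The bookkeeping you flag (the $t$ attaching only to inner blocks, and the passage from monotone to non-crossing partitions via $m(\pi)=|\pi|!/t(\pi)!$) is precisely the content the paper leaves implicit.
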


\allowdisplaybreaks
\begin{rem}
A natural question appears when considering the $t$-monotone analogue of \cref{prop:tsBoolean}. In other words, for any $t,s\geq0$, we would like to describe the coefficients $c^\pi(t,s)$ such that
\begin{equation}
\label{eq:tsmonotone1}
h_n^{(t)}(a_1,\ldots,a_n) = \sum_{\pi\in\NCirr(n)} c^\pi(t,s) h^{(s)}_\pi(a_1,\ldots,a_n),
\end{equation}
for any $n\geq1$ and $a_1,\ldots,a_n\in \A$. Observe that the case $(t,s) = (0,1)$ corresponds to \cref{eq:auxBM}, where the coefficient is given by $c^\pi(t,s) = 1/t(\pi)!$. More generally, the case $(t,s) = (0,s)$ corresponds to the first formula in \cref{thm:tmonotone}, where $c^\pi(t,s) = s^{|\pi|-1}/t(\pi)!$. On the other hand, the case $(t,s) = (1,0)$ corresponds to \eqref{eq:auxMB1}, where $c^\pi(t,s) = \omega(\pi)$. In the language of infinitesimal characters, \eqref{eq:tsmonotone1} can be read as
$$
\rho^{(t)} = \Omega_{t\rho^{(t)}}(\beta) = \Omega_{t\rho^{(t)}}\big(W_{s\rho^{(s)}}(\rho^{(s)})\big) = \frac{r_{\lhd t\rho^{(t)}}}{e^{r_{\lhd t\rho^{(t)}}}-\id}\frac{e^{r_{\lhd s\rho^{(s)}}}-\id}{r_{\lhd s\rho^{(s)}}}\big(\rho^{(s)}\big). 
$$
We can compute the terms of the first four orders of the above expansion:
\begin{eqnarray*}
\rho^{(t)} &=& \rho^{(s)} - \frac{1}{2}(t-s) \big(\rho^{(s)}\lhd \rho^{(s)}\big) + \left( \frac{1}{12}t^2 - \frac{1}{4}ts \right) \big( (\rho^{(s)}\lhd\rho^{(s)})\lhd \rho^{(s)}\big)
\\& & + \left( \frac{1}{4}t^2 - \frac{1}{4}ts + \frac{1}{6} s^2\right) \big(\rho^{(s)}\lhd ( \rho^{(s)}\lhd \rho^{(s)} )\big) - \left(\frac{1}{24}t^3 - \frac{1}{8}t^2s\right) \big( \rho^{(s)}\lhd ((\rho^{(s)}\lhd \rho^{(s)})\lhd \rho^{(s)})\big)
\\ & & -\left(\frac{1}{8}t^3 - \frac{1}{8}t^2s  + \frac{1}{12} ts^2 - \frac{1}{24}s^3 \right)  \big(\rho^{(s)}\lhd (\rho^{(s)}\lhd (\rho^{(s)}\lhd \rho^{(s)}))\big)
\\ & & -\left(\frac{1}{24}t^3 - \frac{1}{6}t^2 s + \frac{1}{8}ts^2 \right) \big((\rho^{(s)}\lhd \rho^{(s)}) \lhd (\rho^{(s)}\lhd \rho^{(s)})\big)
\\ & & - \left(\frac{1}{24}t^3 - \frac{1}{24}t^2s + \frac{1}{12} ts^2\right)\big((\rho^{(s)}\lhd (\rho^{(s)}\lhd \rho^{(s)}) )\lhd \rho^{(s)}\big)+\cdots
\end{eqnarray*}
By evaluating the previous relation on a word $w= a_1\cdots a_n\in \A^{\otimes n}$ and using \eqref{eq:iterated}, we can obtain a combinatorial formula, in terms of irreducible non-crossing partitions, that relates $t$-monotone and $s$-monotone cumulants, for $t,s\geq0$.
\end{rem}

\end{document}